\title{Parametrized Homology via Zigzag Persistence}
\author{
Gunnar Carlsson,
Vin de Silva,
Sara Kali\v{s}nik,
Dmitriy Morozov}
  \thanks{Corresponding author: Sara Kali\v{s}nik (\texttt{skalisnikver@wesleyan.edu})}
\date{}
\newcommand{\ff}{\mathbf{F}}
\newcommand{\kk}{\mathbf{k}}
\newcommand{\qq}{\mathbf{Q}}
\newcommand{\rr}{\mathbf{R}}
\newcommand{\RR}{\mathbb{R}}
\newcommand{\UU}{\mathbb{U}}
\newcommand{\VV}{\mathbb{V}}
\newcommand{\WW}{\mathbb{W}}
\newcommand{\XX}{\mathbb{X}}
\newcommand{\YY}{\mathbb{Y}}
\newcommand{\II}{\mathbb{I}}
\newcommand{\Top}{\mathsf{Top}}
\newcommand{\Int}{\mathsf{Int}}
\newcommand{\Vect}{\mathsf{Vect}}
\newcommand{\Hgr}{\mathrm{H}}
\newcommand{\Wgr}{\mathrm{W}}
\newcommand{\card}{\operatorname{card}}
\newcommand{\id}{\operatorname{id}}
\newcommand{\Img}{\operatorname{Im}}
\newcommand{\Ker}{\operatorname{Ker}}
\newcommand{\Hom}{\operatorname{Hom}}
\newcommand{\Dgm}{\operatorname{Dgm}}
\newcommand{\dgm}{\operatorname{Dgm}_{\rm u}}
\newcommand{\DgmZZ}{\Dgm^{\mathrm{ZZ}}}
\newcommand{\Rect}{\operatorname{Rect}}
\newcommand{\metric}{\mathrm{d}}
\newcommand{\bottle}{\mathrm{d_b}}
\newcommand{\cost}{\operatorname{cost}}
\numberwithin{equation}{section}
\theoremstyle{plain}
\newtheorem{theorem}[equation]{Theorem}
\newtheorem{lemma}[equation]{Lemma}
\newtheorem{proposition}[equation]{Proposition}
\newtheorem*{criterionA}{Criterion A}
\newtheorem*{criterionB}{Criterion B}
\theoremstyle{definition}
\newtheorem{definition}[equation]{Definition}
\newtheorem{example}[equation]{Example}
\theoremstyle{remark}
\newtheorem*{remark}{Remark}
\newtheorem*{example*}{Example}
\newcommand{\dd}{{\backslash\!\backslash}}
\newcommand{\ddt}{\protect\raisebox{0.35ex}{$\scriptstyle{\dd}$}}
\newcommand{\dds}{{^{\scriptscriptstyle\dd}}}
\newcommand{\du}{{\backslash\!\slash}}
\newcommand{\dut}{\protect\raisebox{0.35ex}{$\scriptstyle{\du}$}}
\newcommand{\dus}{{^{\scriptscriptstyle\du}}}
\newcommand{\ud}{{\slash\!\backslash}}
\newcommand{\udt}{\protect\raisebox{0.35ex}{$\scriptstyle{\ud}$}}
\newcommand{\uds}{{^{\scriptscriptstyle\ud}}}
\newcommand{\uu}{{\slash\!\slash}}
\newcommand{\uut}{\protect\raisebox{0.35ex}{$\scriptstyle{\uu}$}}
\newcommand{\uus}{{^{\scriptscriptstyle\uu}}}
\newcommand{\xx}{{\slash\!\slash\!\!\!\!\!\backslash\!\backslash}}
\newcommand{\xxt}{\protect\raisebox{0.35ex}{$\scriptstyle{\xx}$}}
\newcommand{\xxs}{{^{\scriptscriptstyle\xx}}}
\newcommand{\Hp}{\mathcal{H}}
\newcommand{\mudd}{\mu^\dds}
\newcommand{\mudu}{\mu^\dus}
\newcommand{\muud}{\mu^\uds}
\newcommand{\muuu}{\mu^\uus}
\newcommand{\nudu}{\nu^\dus}
\newcommand{\muOrd}{\mu^{\operatorname{Ord}}}
\newcommand{\muRel}{\mu^{\operatorname{Rel}}}
\newcommand{\muExtP}{\mu^{\operatorname{Ext}^+}}
\newcommand{\muExtM}{\mu^{\operatorname{Ext}^-}}
\newcommand{\ordPic}{\includegraphics[scale=1.8]{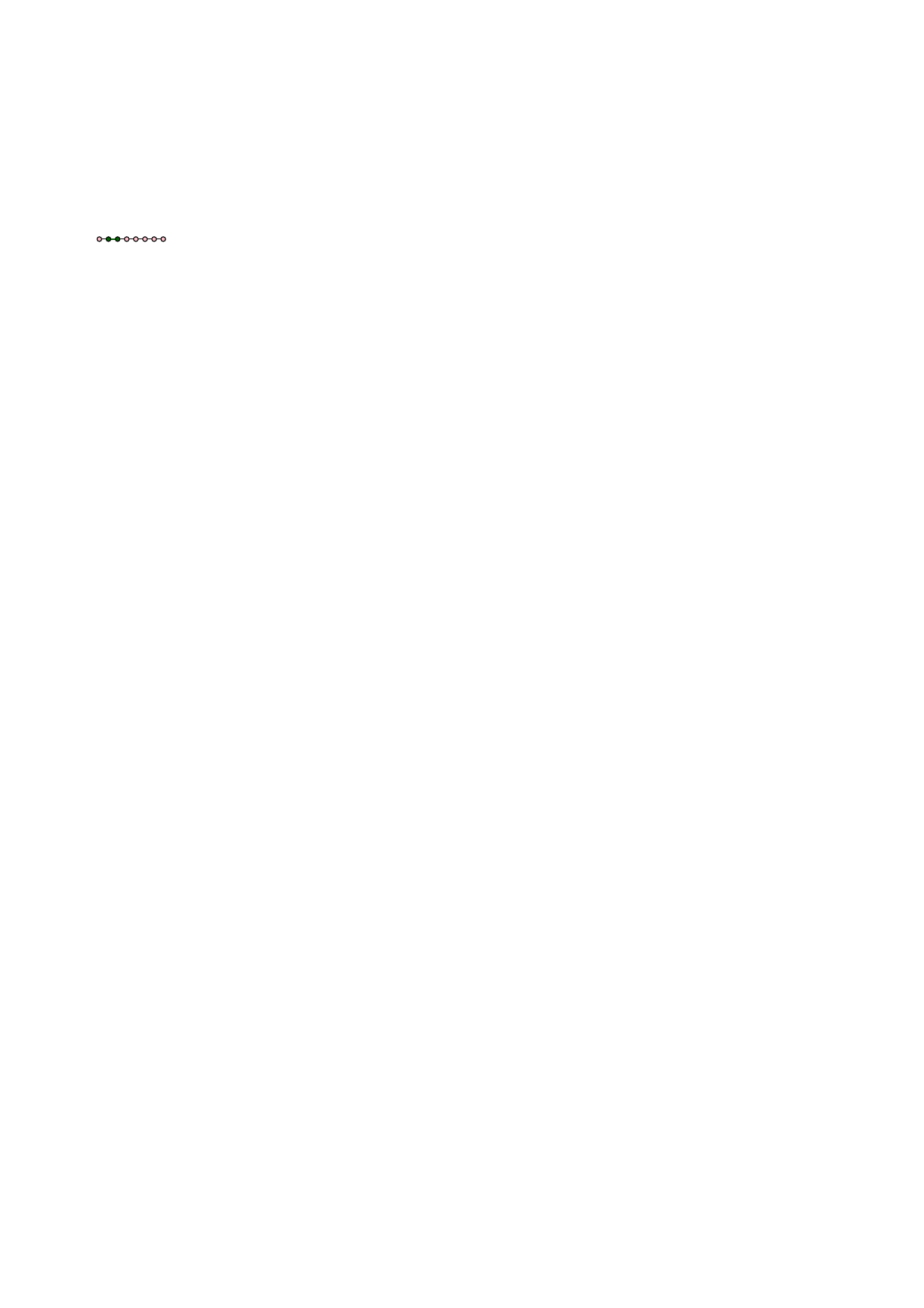}}
\newcommand{\relPic}{\includegraphics[scale=1.8]{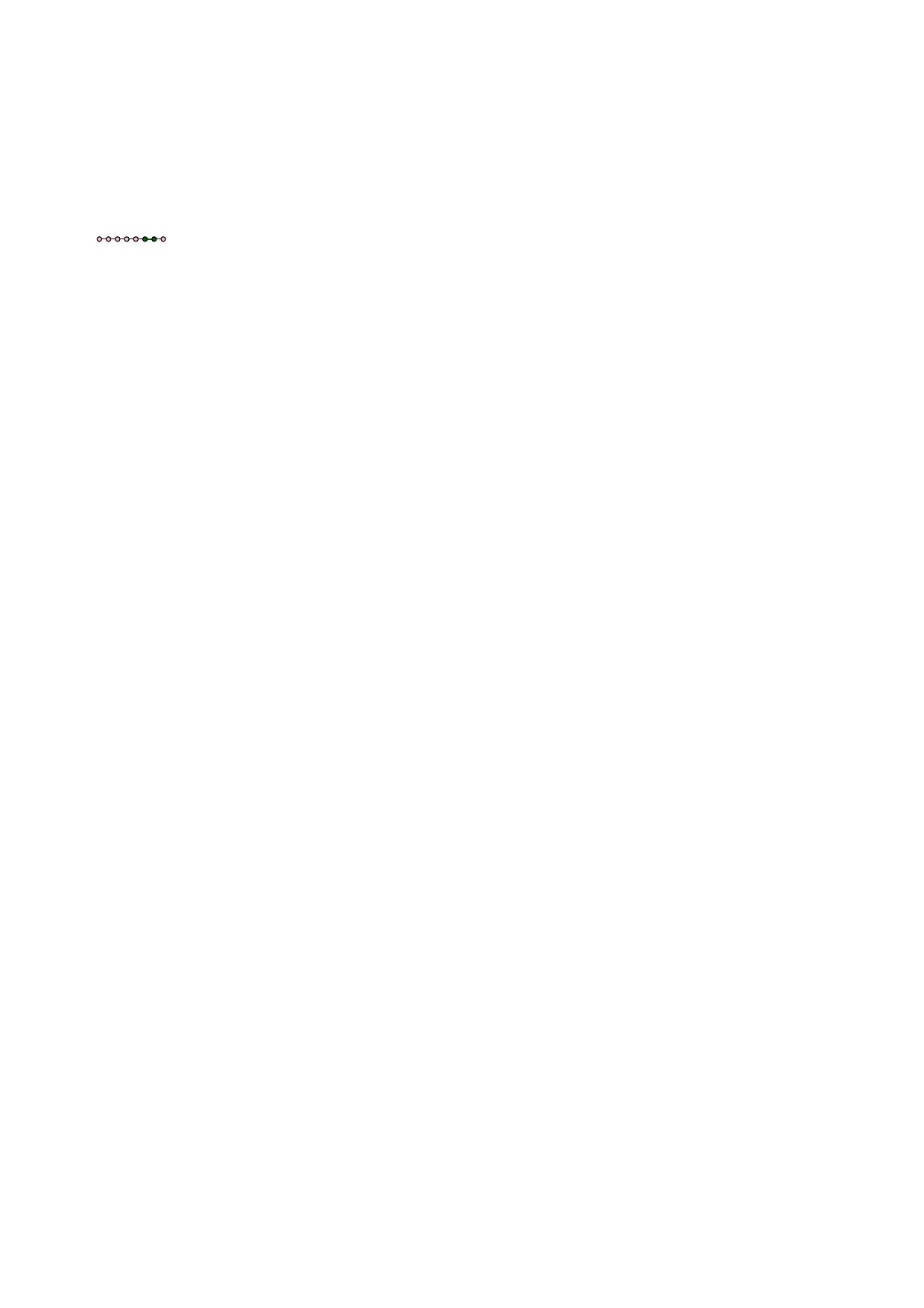}}
\newcommand{\extPPic}{\includegraphics[scale=1.8]{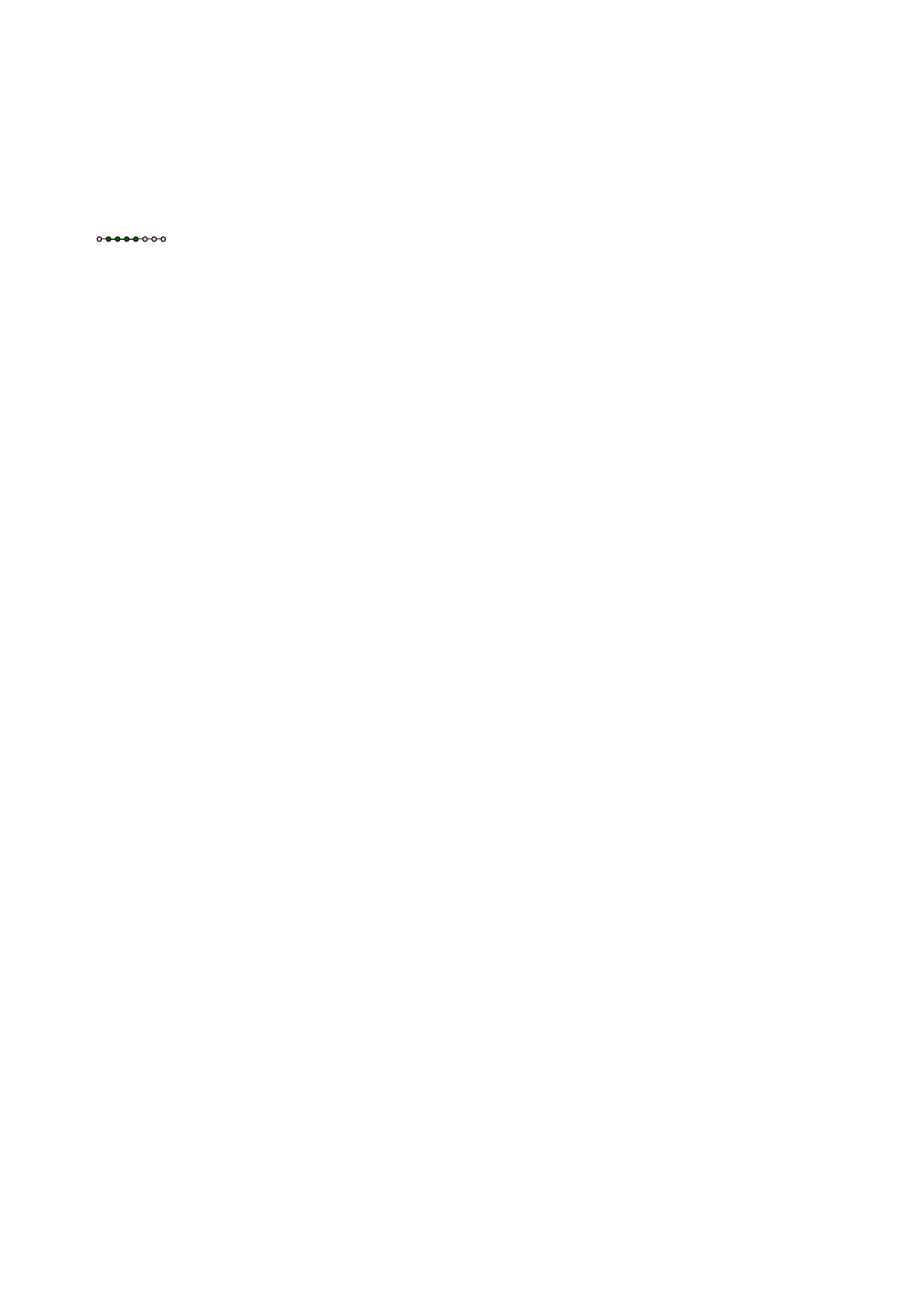}}
\newcommand{\extMPic}{\includegraphics[scale=1.8]{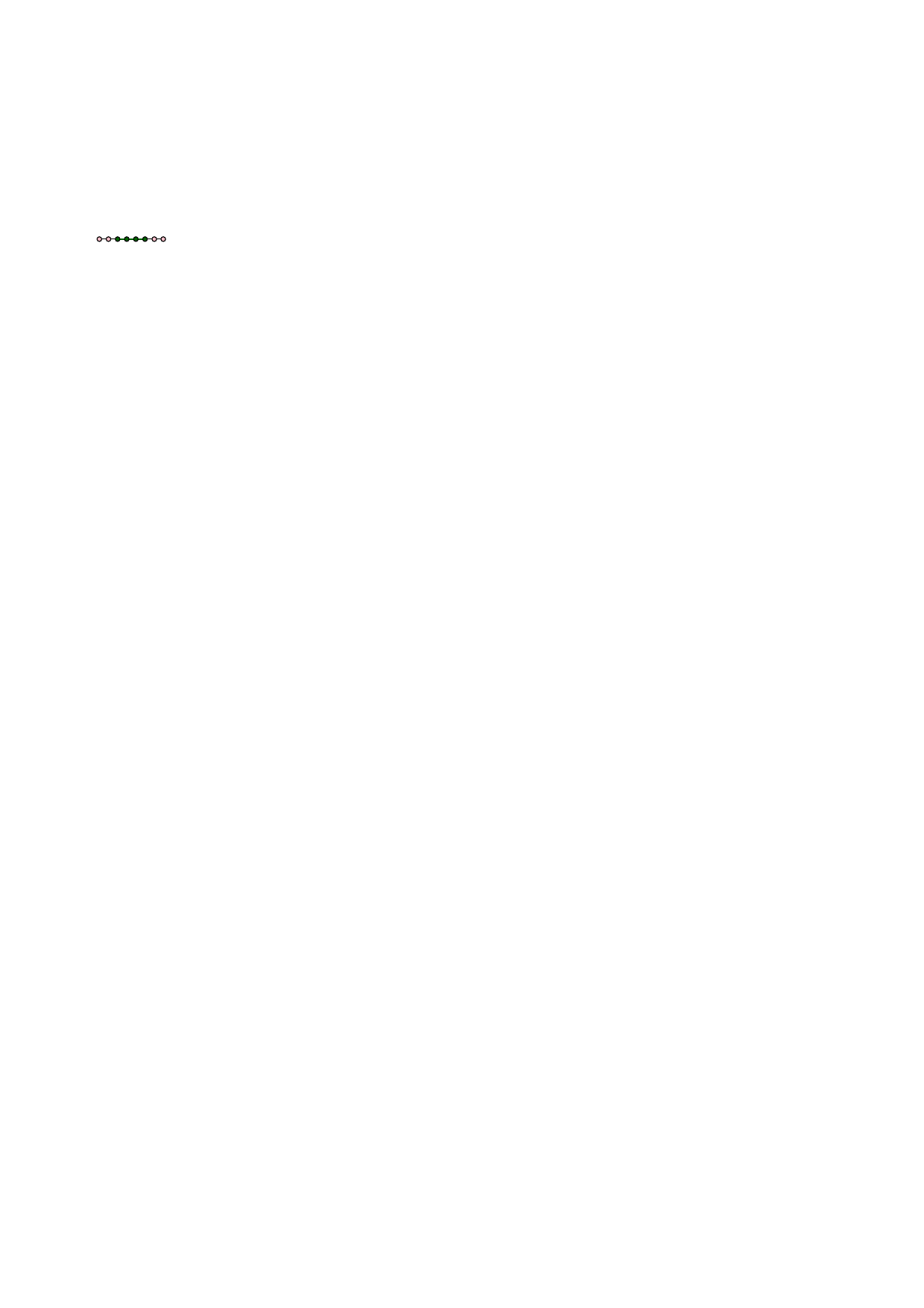}}
\newcommand{\wud}{%
	\raisebox{-0.5ex}{%
		\includegraphics[scale=1.8]{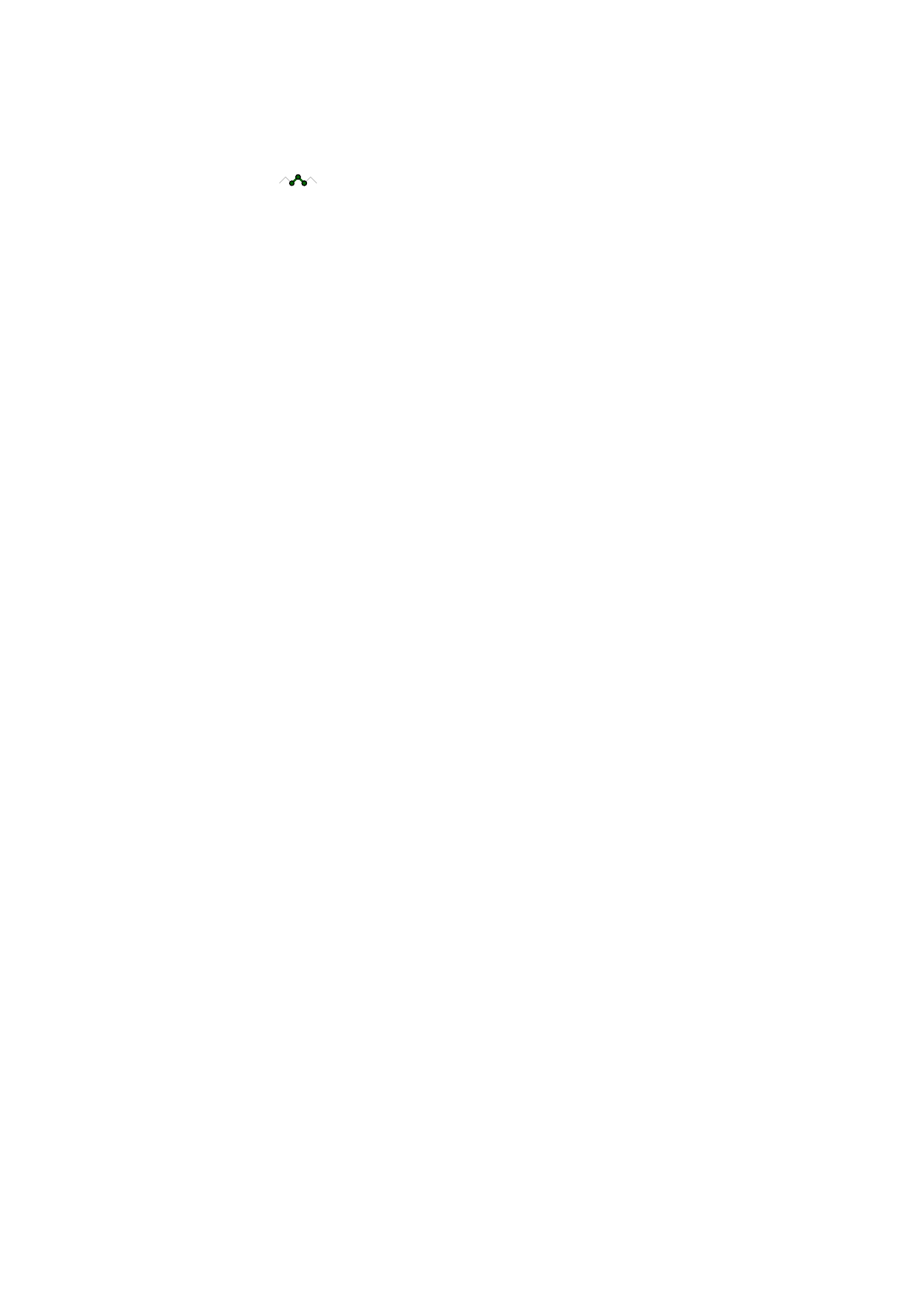}%
	}}
\newcommand{\zzboxuu}[1] 
    {\left\langle \raisebox{-1.25ex}{\includegraphics[scale=1.8,page=#1]{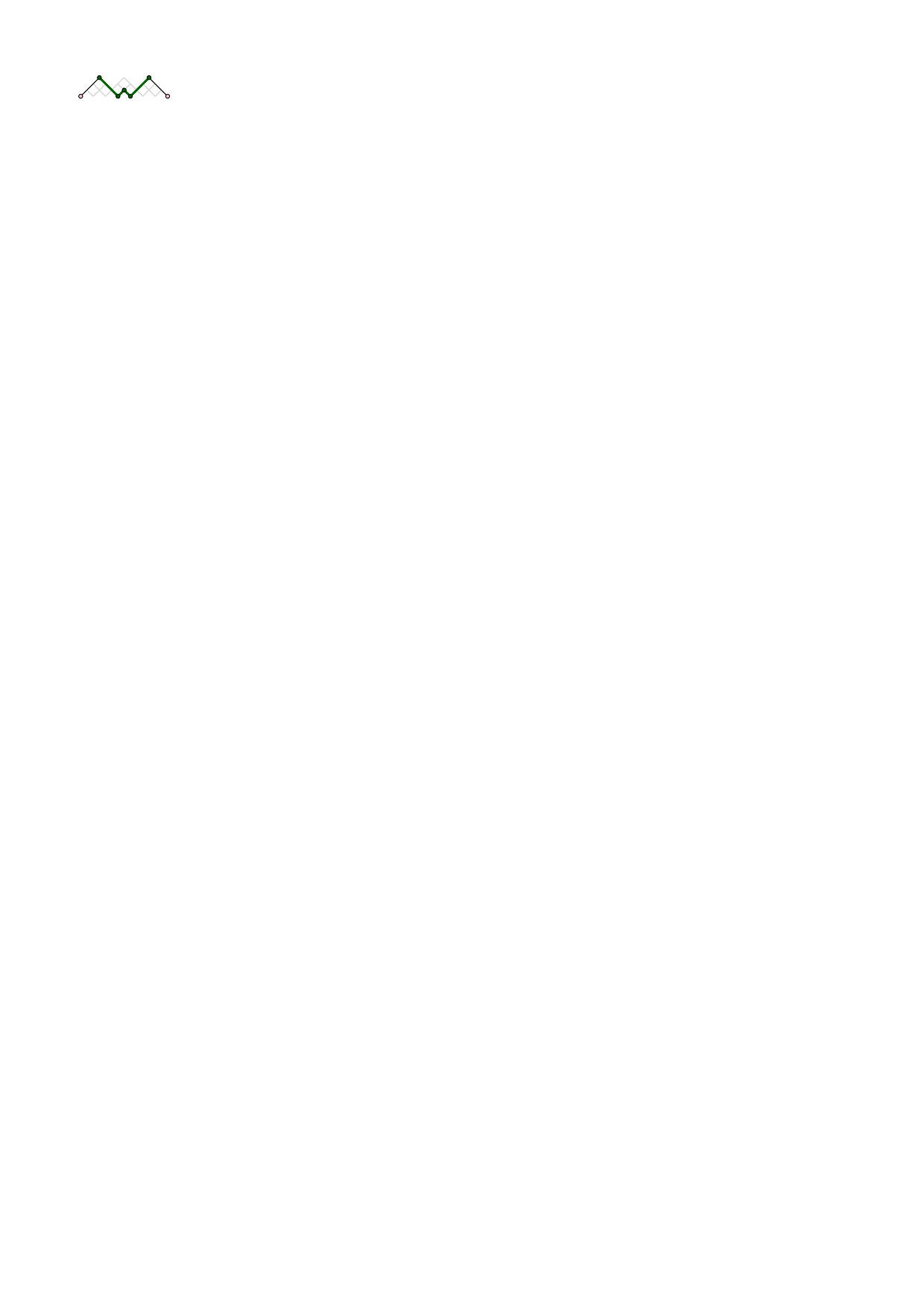}}\right\rangle}
    \newcommand{\morse}[1] 
    {\left\langle \raisebox{-1.25ex}{\includegraphics[scale=1.8,page=#1]{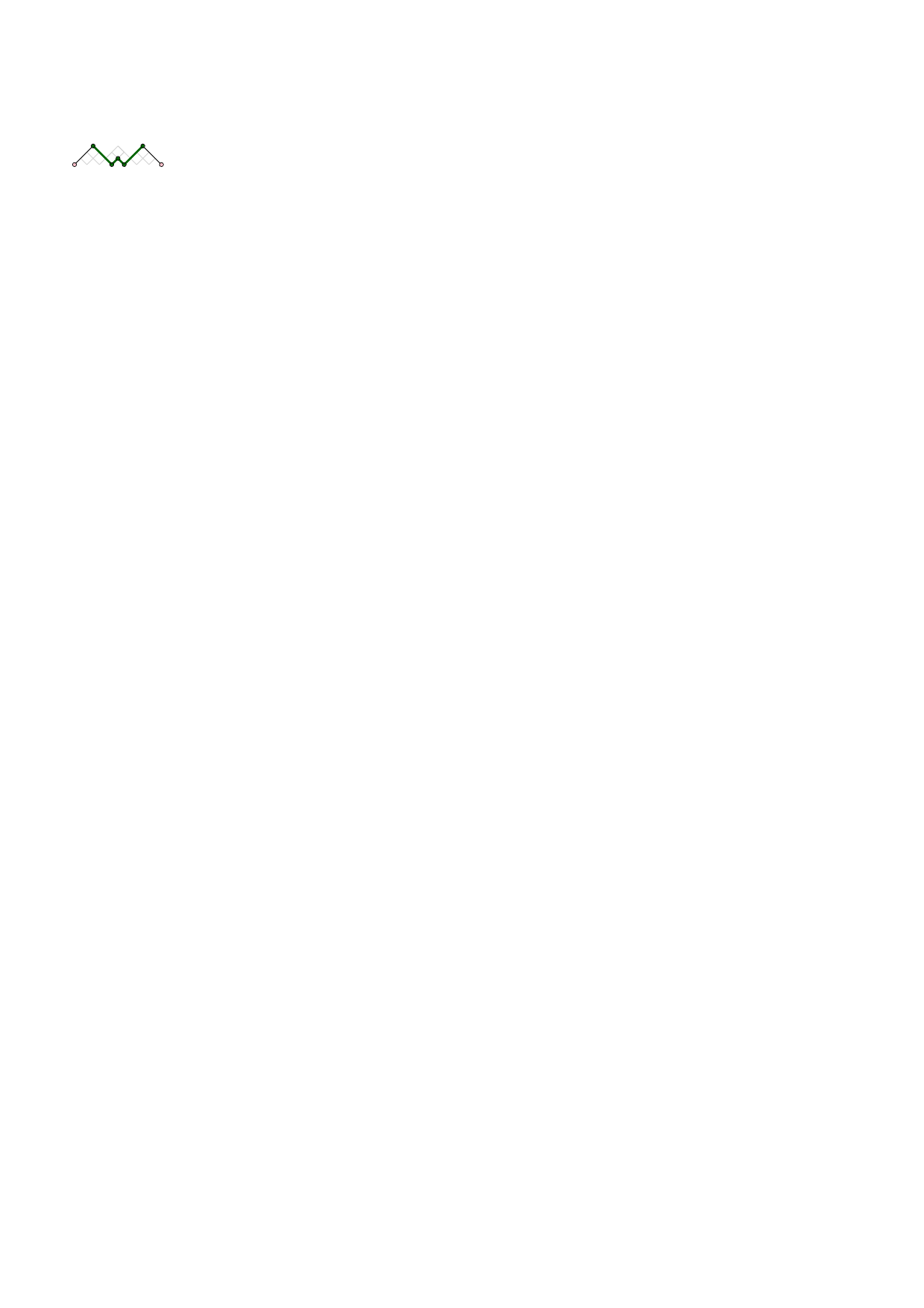}}\right\rangle}
    \newcommand{\zzhsplituuu}[1] 
    {\left\langle \raisebox{-1.25ex}{\includegraphics[scale=1.8,page=#1]{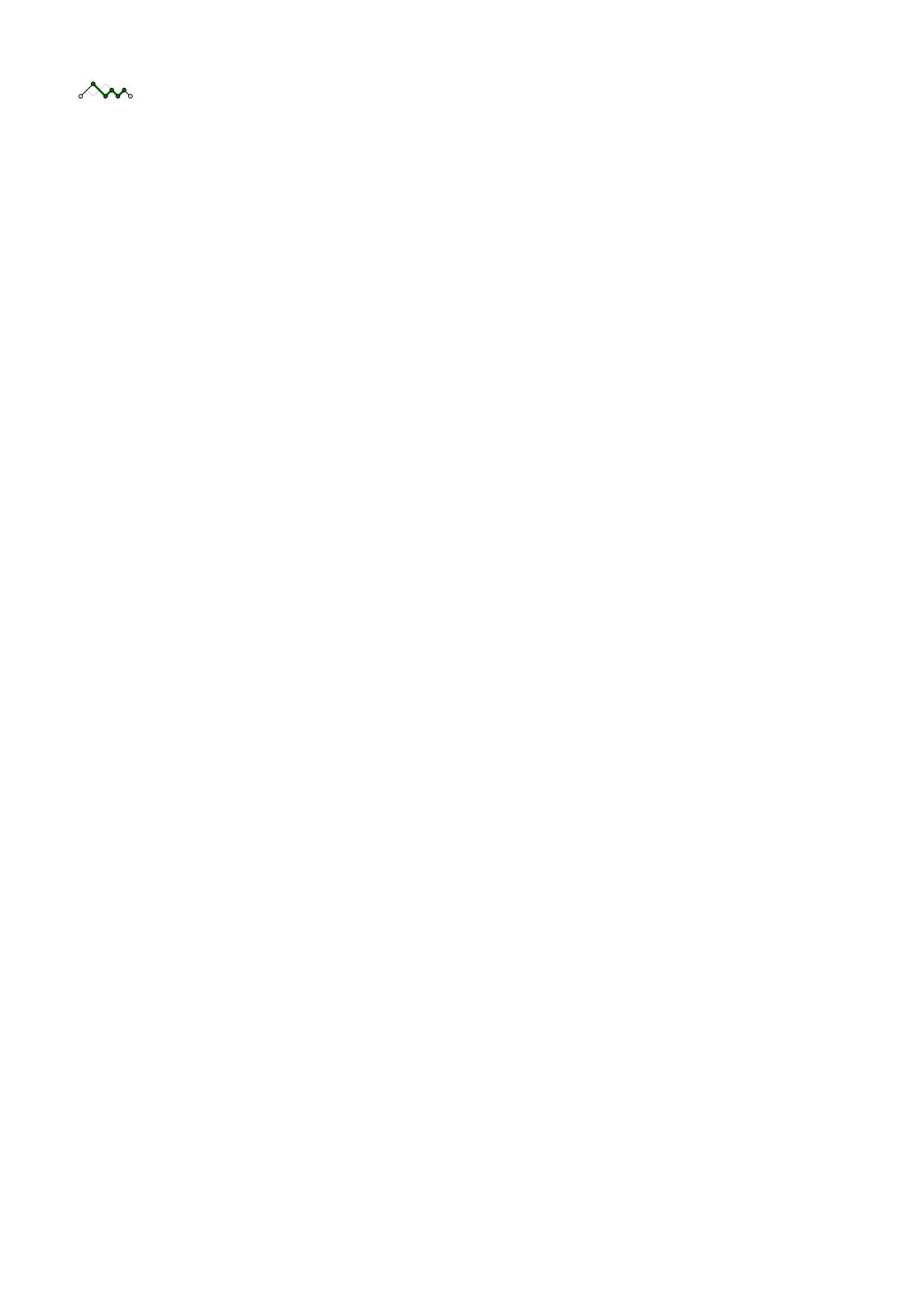}}  \right\rangle}
\newcommand{\muLzzEP}[1]
    {\left\langle \raisebox{-2.5ex}{\includegraphics[scale=1.8,page=#1]{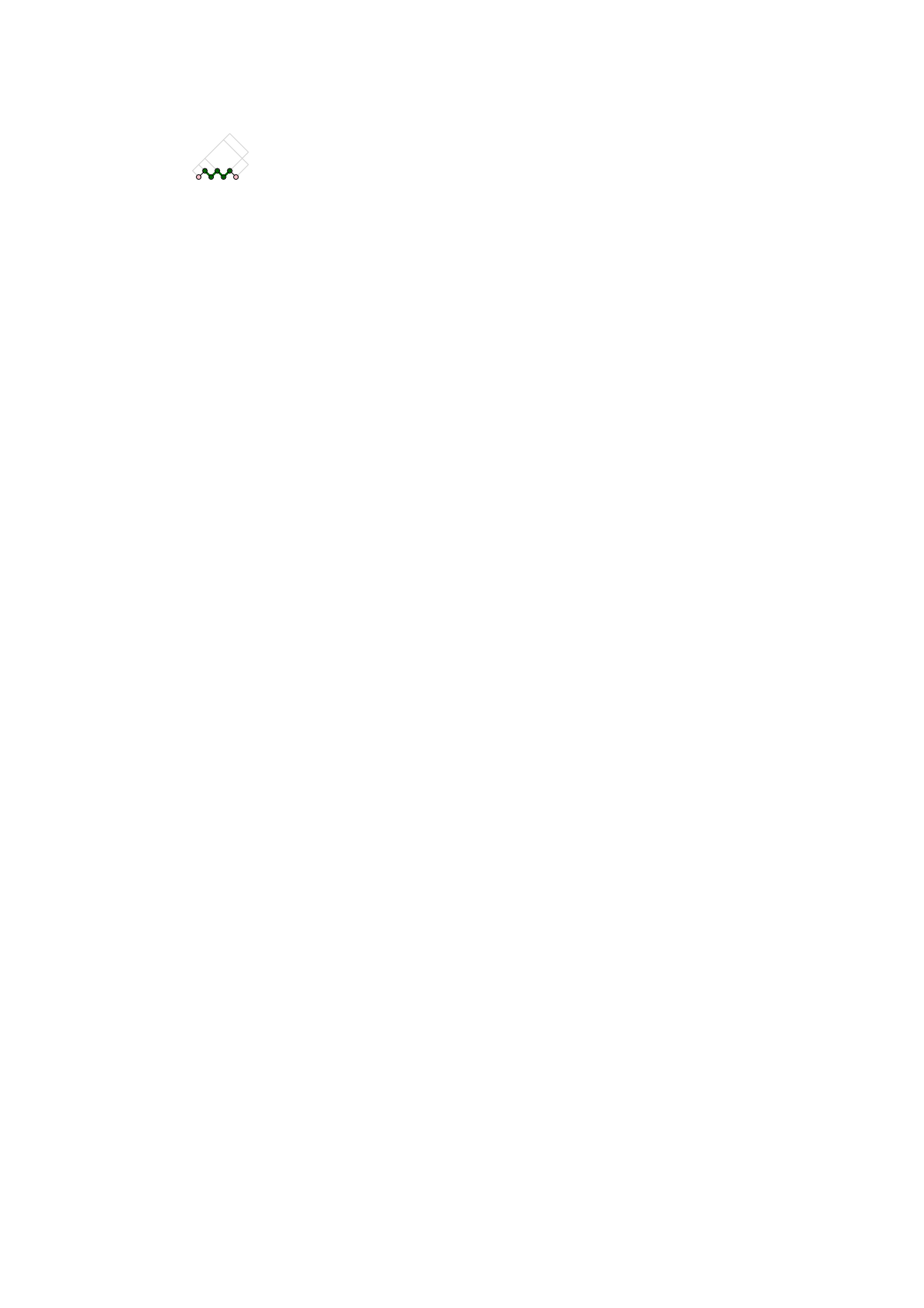}} \right\rangle}
\newcommand{\bat}[1]
    { \raisebox{-2.5ex}{\includegraphics[scale=1.8,page=#1]{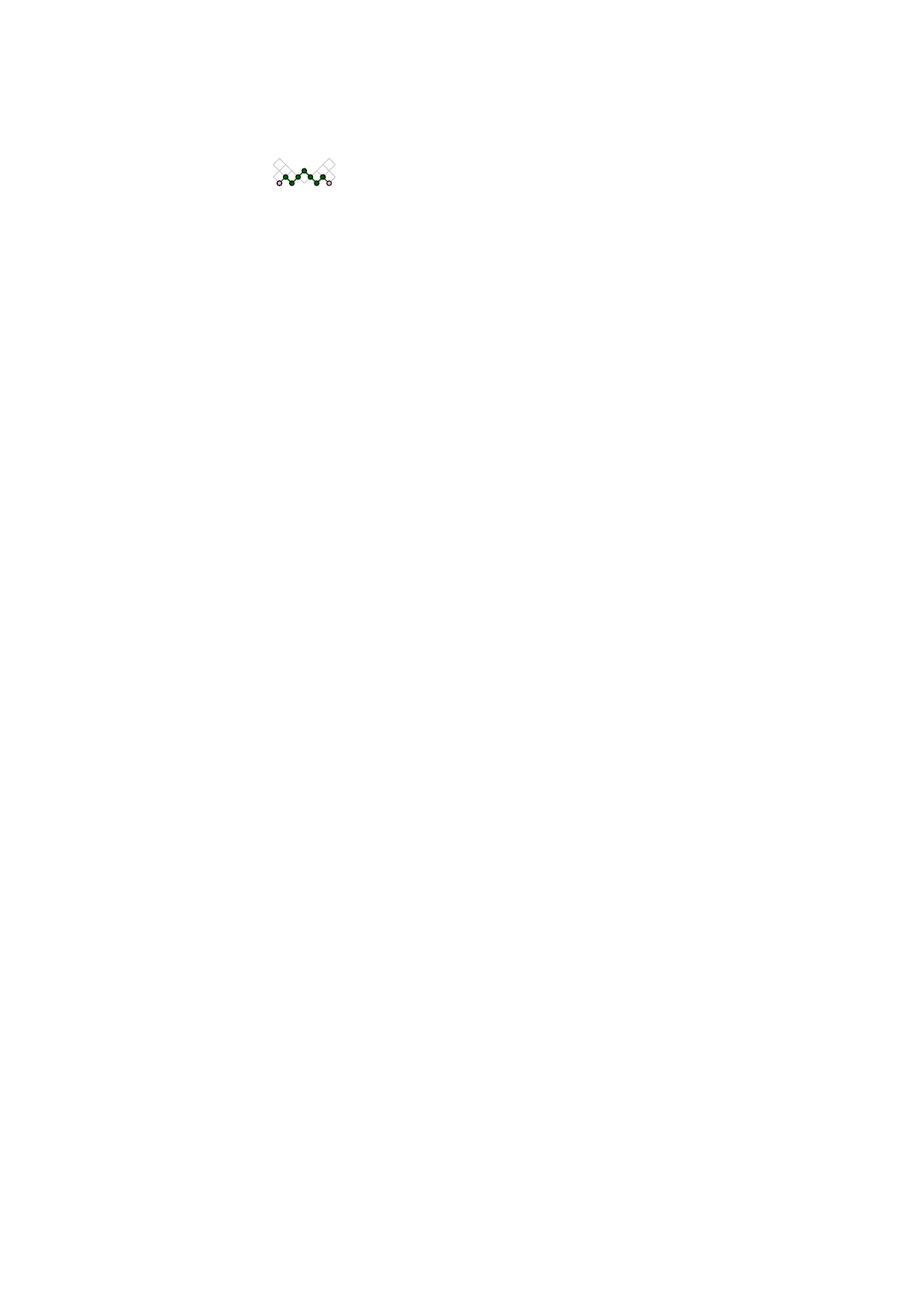}}}
   \newcommand{\critical}[1]
    { \raisebox{-0.75ex}{\includegraphics[scale=1.8,page=#1]{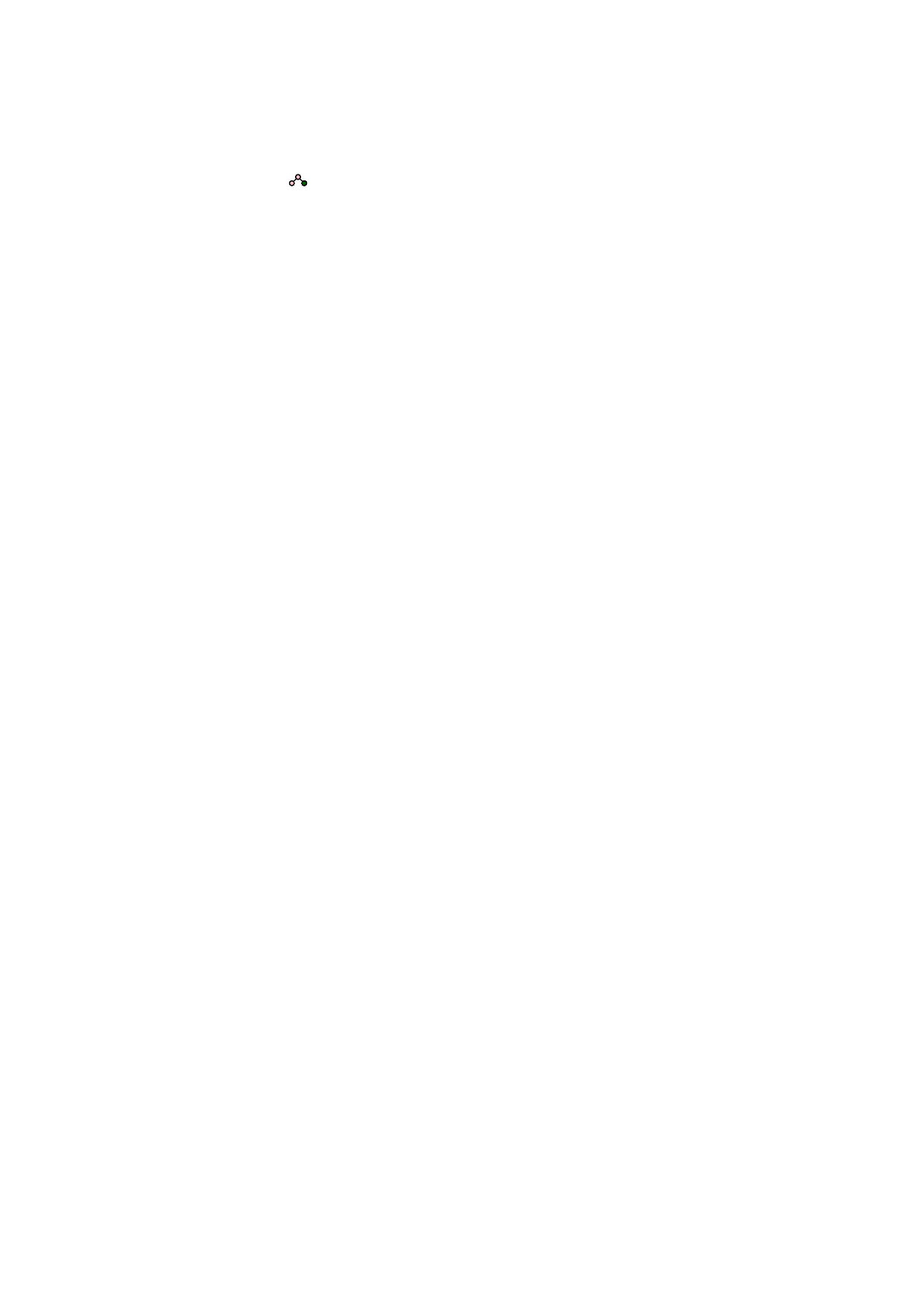}}}
    \newcommand{\onof}[1]
    { \raisebox{-0.25ex}{\includegraphics[scale=1.8,page=#1]{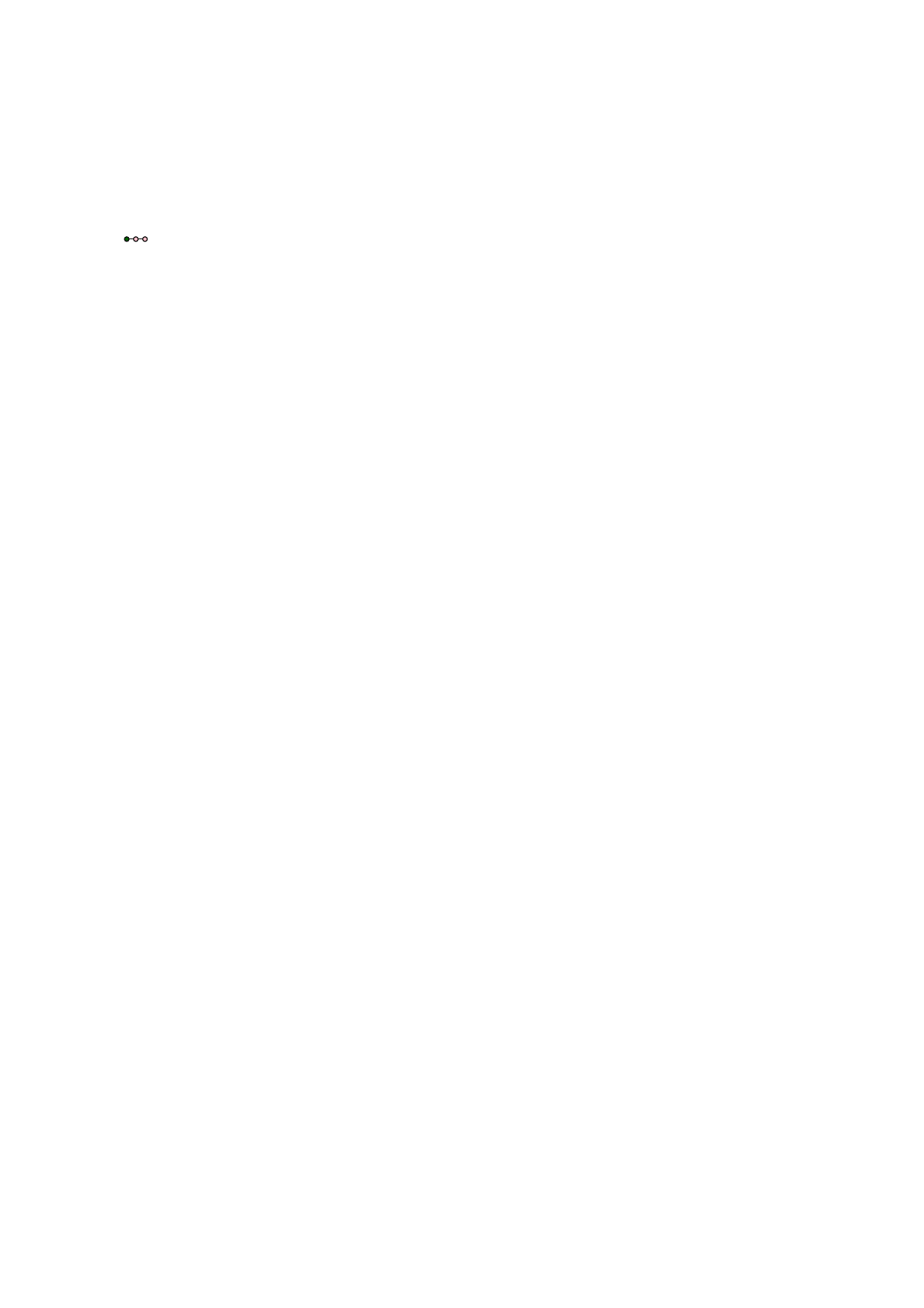}}}
        \newcommand{\eksample}[1]
    { \raisebox{-0.25ex}{\includegraphics[scale=1.8,page=#1]{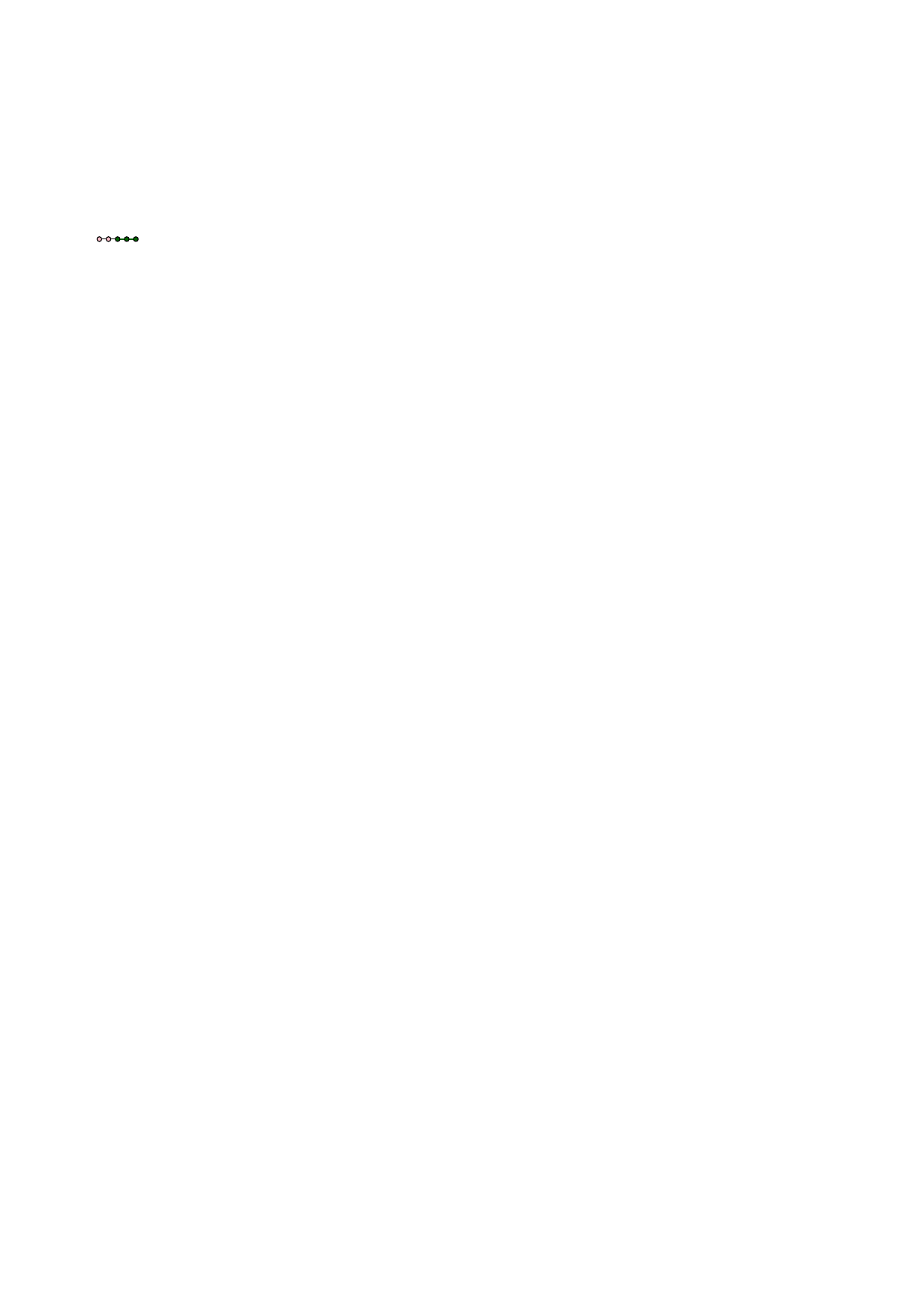}}}
\newcommand{\zzspider}[1] 
    { \raisebox{-0.5ex}{\includegraphics[scale=1.8,page=#1]{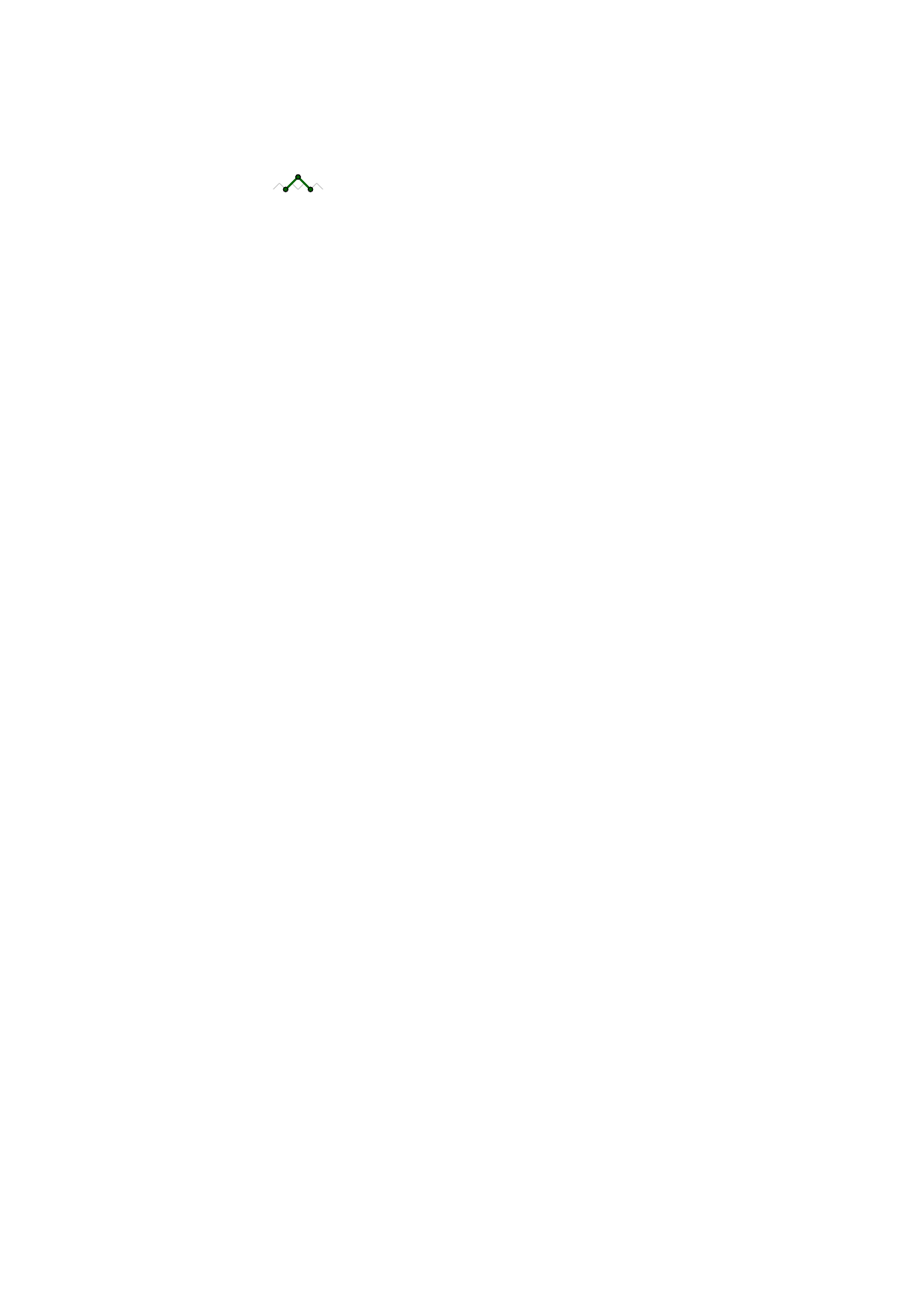}}}
\newcommand{\zzud}{%
 \raisebox{-0.5ex}{%
  \includegraphics[scale=1.8]{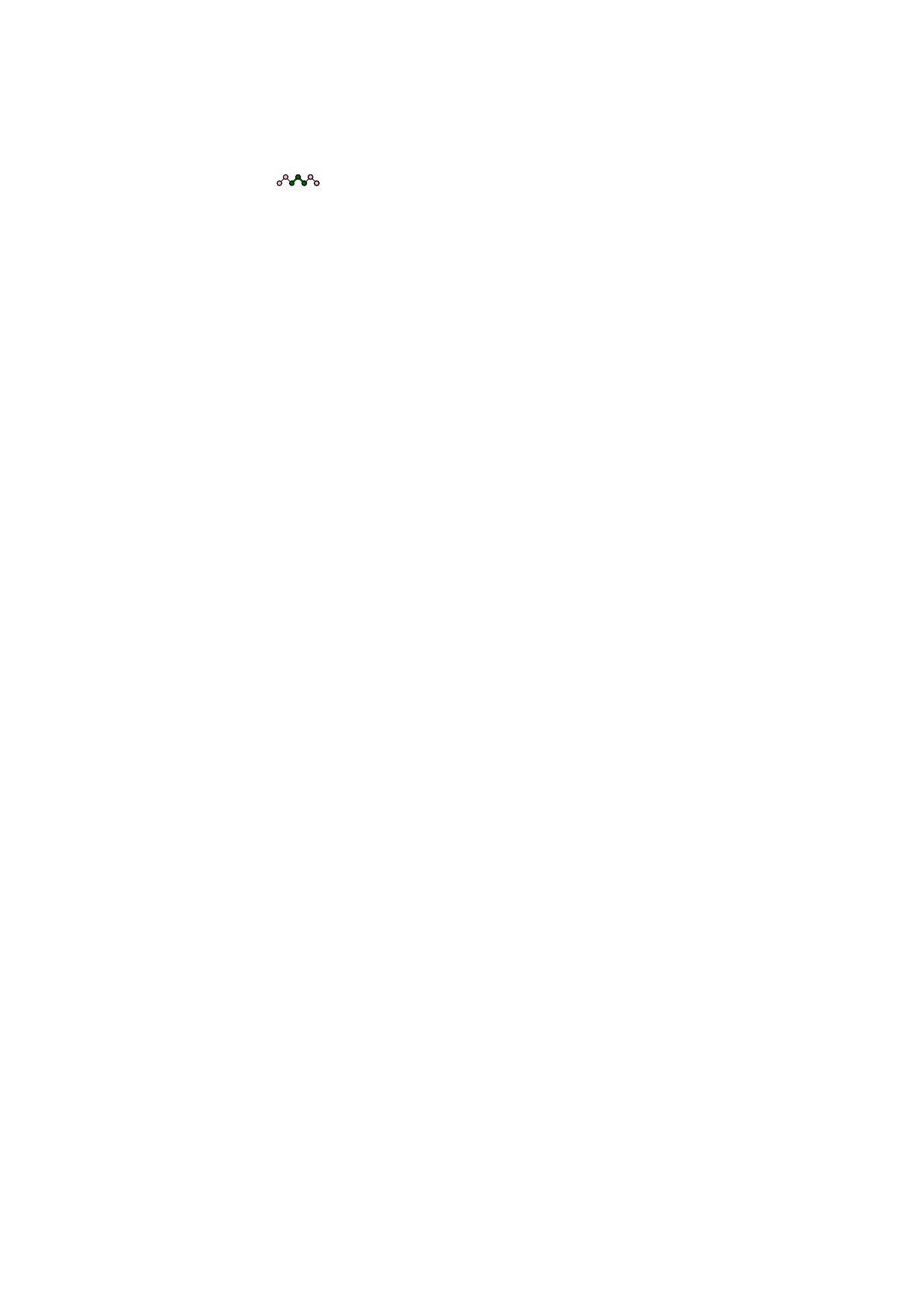}%
 }}
\newcommand{\zzdd}{%
 \raisebox{-0.5ex}{%
  \includegraphics[scale=1.8]{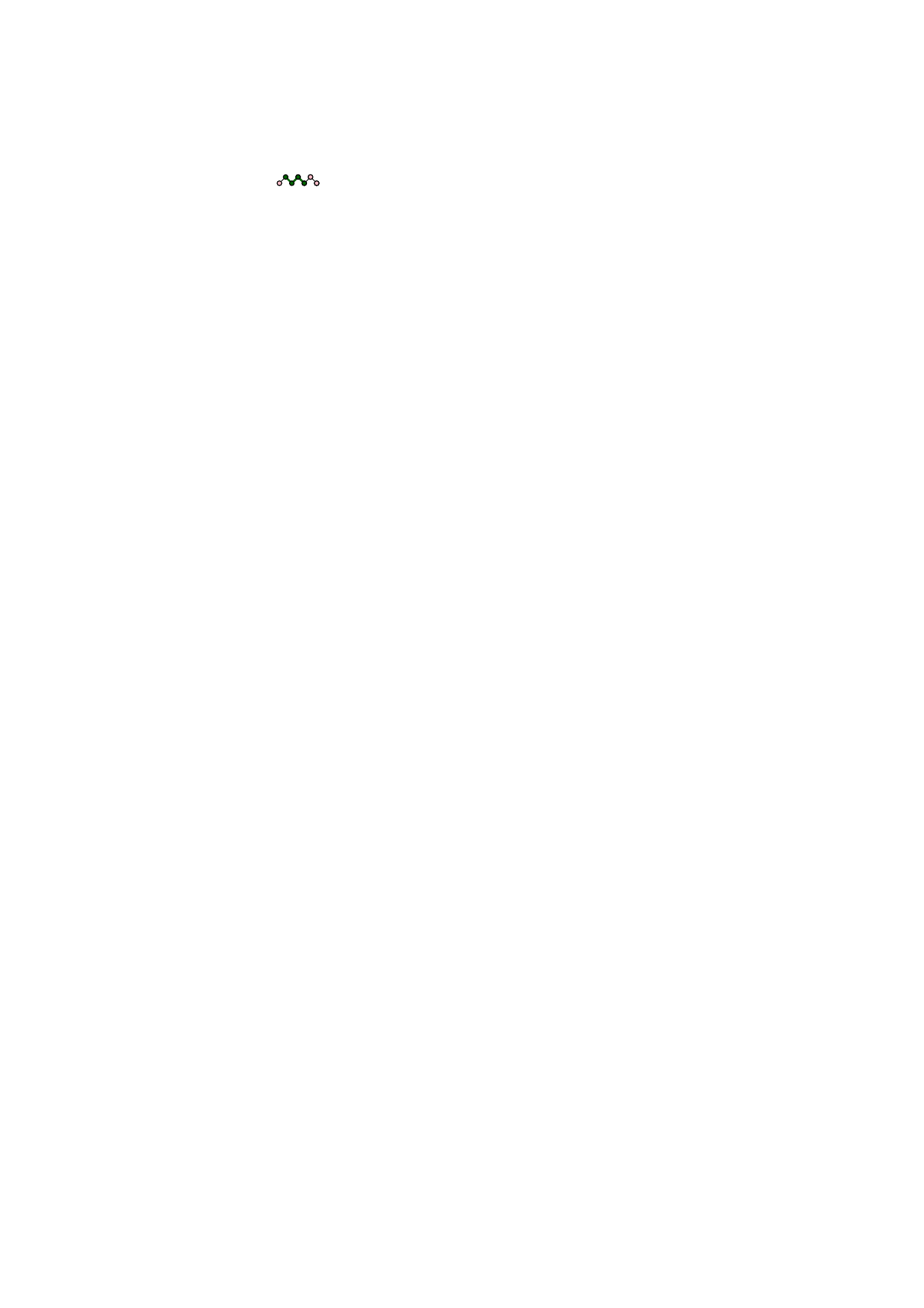}%
 }}
\newcommand{\zzuu}{%
 \raisebox{-0.5ex}{%
  \includegraphics[scale=1.8]{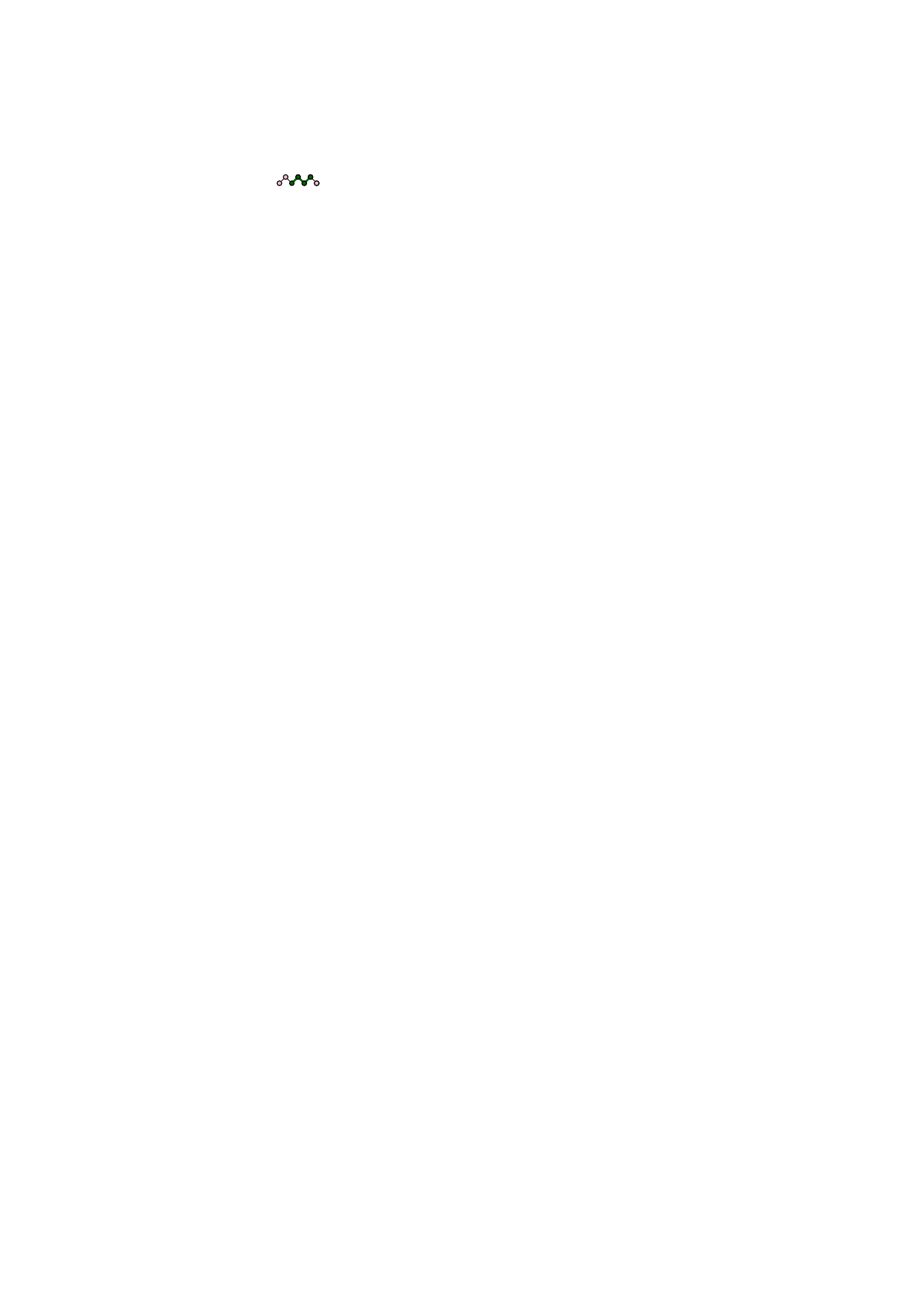}%
 }}
\newcommand{\zzdu}{%
 \raisebox{-0.5ex}{%
  \includegraphics[scale=1.8]{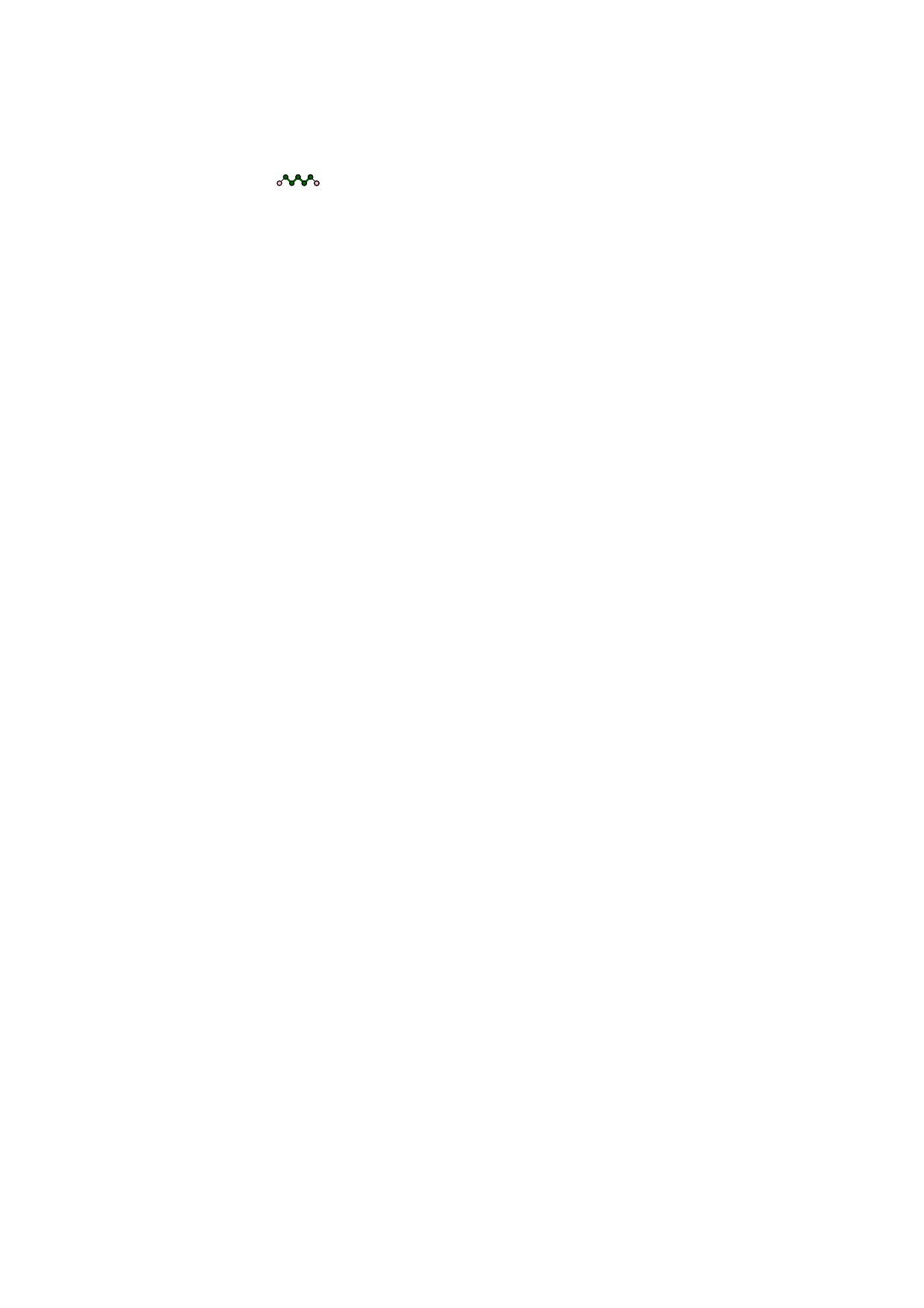}%
 }}
\newcommand{\eps}{\epsilon}
\begin{document}
 \maketitle
\begin{abstract}
This paper introduces parametrized homology, a continuous-parameter generalization of levelset zigzag persistent homology that captures the behavior of the homology of the fibers of a real-valued function on a topological space. This information is encoded as a `barcode' of real intervals, each corresponding to a homological feature supported over that interval; or, equivalently, as a persistence diagram. Points in the persistence diagram are classified algebraically into four classes; geometrically, the classes identify the distinct ways in which homological features perish at the boundaries of their interval of persistence. We study the conditions under which spaces fibered over the real line have a well-defined parametrized homology; we establish the stability of these invariants; and we show how the four classes of persistence diagram correspond to the four diagrams that appear in the theory of extended persistence.
\end{abstract}


\keywords{{\bf Keywords.} persistence diagram, zigzag persistence, levelset zigzag persistence, extended persistence}



\section{Introduction}
\label{sec:intro}

Persistent homology is one of the key topological methods used in data analysis; as such it deserves substantial credit for the emergence of applied topology as a field. A common theme in this history has been the introduction of a method, motivated by applications or computation, that is encumbered by restrictive theoretical assumptions. The original persistent homology~\cite{pershom} required discretization of the input, an assumption that was lifted as the theory became better understood~\cite{pfd-persistence,Thestructureandstability}. The celebrated stability result~\cite{Cohen-Steiner_2007} had strong tameness assumptions that were relaxed over a sequence of papers~\cite{proximity,matchings-barcodes,Thestructureandstability}.
Viewed in this context, our paper is another rung on the climb to a transparent theory of persistence, free of unnecessary restrictions.

The specific goal of this paper is to generalize levelset zigzag persistence~\cite{Zigzagpersistenthomologyandreal} to the continuous case, lifting the restriction that the spaces under consideration have discrete structure. Our main tools are the theory of rectangular measures and a graphical notation for quiver representation calculations; both taken from~\cite{Thestructureandstability}.
On the algebraic side there are some technical requirements, regarding choice of homology theory, that we work through in detail.
On the geometric side, we study the different phenomena recorded by our invariants.
Finally, we generalize the equivalence~\cite{Zigzagpersistenthomologyandreal} between levelset persistence and extended persistence~\cite{ExtendingPersistence} to the continuous case; and we discuss parametrized cohomology.

\medskip
The general set-up is this.
Let $X$ be a topological space and let $f: X \to \RR$ be a continuous function. Such a pair $\XX = (X,f)$ is commonly called a \emph{space fibered over the real line}; in this paper, we use the convenient term \emph{$\RR$-space}. We can view an $\RR$-space as a collection of topological spaces
\[
\XX_a^a = f^{-1}(a), \qquad a \in \RR
\]
called the \emph{levelsets} of~$\XX$, where the topology on the total space~$X$ bestows upon this collection of spaces the structure of a `family'. In particular, the \emph{interlevelsets}
\[
\XX_a^b = f^{-1}[a,b], \qquad a,b \in \RR,\, a \leq b
\]
provide cobordism-style relationships between the levelsets.
The basic question is to understand the homological invariants of~$\XX$. In particular, how does the homology of~$\XX_a^a$ vary with~$a$?
Taking the family structure into account, this question demands a richer answer than simply recording the homology of each~$\XX_a^a$ separately.

What we seek is a reasonable theory for taking an $\RR$-space and decomposing its homological information into discrete features supported over intervals. To shed light on the meaning of `reasonable', we highlight some desired properties. Such a theory would:
\begin{itemize}
\item
retrieve all obvious homological information stored in $(X,f)$;
\item
be manifestly symmetric with respect to reversal of the real line~$\RR$;
\item
be widely applicable, free from excessively strong finiteness assumptions.
\end{itemize}
We return to the question of what we mean by `all obvious homological information'. First, we consider four examples of existing theories, indicating why they do not fully satisfy these properties.

\begin{example}[standard persistent homology]
The classical theory of persistence~\cite{pershom} is defined in terms of the \emph{sublevelsets}
\[
\XX^a = f^{-1}(-\infty,a]
\]
of the $\RR$-space $(X,f)$. We begin by choosing a finite set of values $a_0 < a_1 < \dots < a_n$. This could be the set of critical values in the case of a manifold with a Morse function; or it could simply be an arbitrary discretization of the real line. We then form the diagram of topological spaces
\[
\begin{tikzpicture}[xscale=1.35,yscale=1]
\draw
(0,0) node(00){$\XX^{a_0}$}
(1,0) node(10){$\XX^{a_1}$}
(2,0) node(20){$\ldots$} 
(3,0) node(30){$\XX^{a_n}$,}
;
\draw[->] (00) -- (10); 
\draw[->] (10) -- (20);
\draw[->] (20) -- (30);
\end{tikzpicture}
\]
where the arrows denote the canonical inclusion maps. By applying a homology functor~$\Hgr$ with field coefficients, we get a diagram of vector spaces and linear maps
\[
\begin{tikzpicture}[xscale=1.75,yscale=1]
\draw
    (0,0) node(00){$\Hgr(\XX^{a_0})$}
    (1.1,0) node(10){$\Hgr(\XX^{a_1})$}
    (2,0) node(20){$\ldots$}
    (3,0) node(30){$\Hgr(\XX^{a_n})$.} ;
\draw[->] (00) -- (10); 
\draw[->] (10) -- (20);
\draw[->] (20) -- (30);
\end{tikzpicture}
\]
The structure of such a diagram is described by its \emph{barcode} or \emph{persistence diagram} (Section~\ref{subsec:zigzag}).
The resulting collection of barcodes captures some of the information that we are seeking in the present work.

Standard persistent homology doesn't satisfy all our desired properties.  Although it is possible to get rid of the finite discretization of the real line~\cite{pfd-persistence,Thestructureandstability}, the first two properties are not satisfied. Most obviously, the construction is asymmetric when reversing the real line. For instance, let $X$ be the cone on a topological space~$Y$
\[
X = ( Y \times [0,1] ) / ( Y \times \{0\} )
\]
and let $f([x,t]) = t$ be the cone height function. Then the persistent homology of $(X,f)$ is indistinguishable from the persistent homology of a 1-point $\RR$-space $(*,0)$. On the other hand, the persistent homology of $(X,-f)$ detects the homology of~$Y$ over the interval $[-1,0)$.
%
%
\end{example}

One might imagine that the persistent homology of $(X,f)$ and $(X,-f)$ together capture all information of interest. The next example shows that there is, in fact, more information to be gathered.

\begin{example}[extended persistent homology]
The theory of `extended persistence' introduced by Cohen-Steiner et al.~\cite{ExtendingPersistence} has similar goals to ours, but addresses them under the restriction that $\XX$ be `tame' in the sense of having finitely many critical values and cylindrical behavior, i.e.\ `Morse-like' behavior, between those critical values. Adding \emph{superlevelsets}
\[
\XX_a = f^{-1}[a,+\infty),
\]
Cohen-Steiner et al.\ consider the sequence of spaces and pairs
\[
\begin{tikzpicture}[xscale=1.35,yscale=1]
\draw
    (0,0) node(00){$\XX^{a_0}$}
    (1,0) node(10){$\ldots$}
    (2,0) node(20){$\XX^{a_n}$}
    (3,0) node(30){$X$}
    (4.3,0) node(40){$(X, \XX_{a_n})$}
    (5.6,0) node(50){$\ldots$}
    (6.9,0) node(60){$(X, \XX_{a_0})$,}   ;

\draw[->] (00) -- (10); 
\draw[->] (10) -- (20);
\draw[->] (20) -- (30);
\draw[->] (30) -- (40);
\draw[->] (40) -- (50);
\draw[->] (50) -- (60);
\end{tikzpicture}
\]
where $a_0 < \dots < a_n$ is the set of critical values.
The extended persistence of~$\XX$ is the persistent homology of this sequence
\[
\begin{tikzpicture}[xscale=1.75,yscale=1]
\draw (0,0) node(00){$\Hgr(\XX^{a_0})$} (1,0) node(10){$\ldots$}  (2,0) node(20){$\Hgr(\XX^{a_n})$}  (3.22,0) node(30){$\Hgr(X)$} (4.6,0) node(40){$\Hgr(X, \XX_{a_n})$} (5.75,0) node(50){$\ldots$}   (6.9,0) node(60){$\Hgr(X, \XX_{a_0})$}   ;

\draw[->] (00) -- (10); 
\draw[->] (10) -- (20);
\draw[->] (20) -- (30);
\draw[->] (30) -- (40);
\draw[->] (40) -- (50);
\draw[->] (50) -- (60);
\end{tikzpicture}
\]
obtained by applying a homology functor~$\Hgr$ with field coefficients. If we fix the homology theory and the field of coefficients, and vary the homological dimension, then it turns out~\cite{Zigzagpersistenthomologyandreal} that the resulting collection of barcodes captures all the information that we are seeking in the present work.
There are four types of bars identified in~\cite{ExtendingPersistence}, each having a different geometric significance; this is explored in some detail by Bendich et al.~\cite{bendich}, as part of a broader program to understand homological stability of the fibers of an $\RR$-space. Two of of the four types can be matched to the standard persistence of $(X,f)$ and $(X,-f)$. The other two types provide new information.

The symmetry of this theory is, however, not at all obvious: there is no immediately manifest relationship between the extended persistence barcodes of $(X,f)$ and $(X,-f)$. The existence of such a symmetry was conjectured by Cohen-Steiner et al.~\cite{ExtendingPersistence} on the basis of results obtained for closed manifolds using duality theorems. The matter was resolved in~\cite{Zigzagpersistenthomologyandreal}, which establishes a precise symmetry between the two sets of barcodes, via calculations in zigzag persistent homology. The symmetry requires considering homology in more than one dimension at once, since the correspondence between the barcodes involves dimension shifts.

Finally, we note that it is relatively straightforward to use rectangle measures to generalize extended persistence to the continuous case; the procedure is outlined in~\cite{Thestructureandstability}. We will say more about extended persistence in Section~\ref{sec:extended-persistence}.
\end{example}

\begin{example}[interval persistent homology]
Dey and Wenger~\cite{StabilityCP} proposed a theory of `interval persistence'. They consider {interlevelsets} $\XX_a^b$,
seeking maximal intervals $[a, b]$ such that the sequence
\[
    \Hgr(\XX_a^a) \to \Hgr(\XX_a^{b-\eps}) \to \Hgr(\XX_a^b),
\]
supports a summand over the first two vector spaces, but not the third. In other words, they look for classes in the levelsets that vanish in interlevelsets. Although interval persistent homology still does not satisfy all our desired properties, it does suggest additional homological information that we want to recover from an $\RR$-space.
\end{example}

\begin{remark}
Building on this work, Burghelea, Dey and Haller have developed an analogous program to study the persistent homology of spaces fibered over the circle \cite{Burghelea_Dey_2013,Burghelea_Haller_2013}.
\end{remark}

Extended and interval persistence hint at what we mean by `all obvious homological information', and invite us to adopt a categorical perspective. Let $\Int$ denote the category of closed intervals $[a,b]$ in the real line; the morphisms are the inclusions $[a,b] \subseteq [c,d]$. Then an $\RR$-space $\XX = (X,f)$ can be thought of as a functor $\XX: \Int \to \Top$ that carries each interval $[a,b]$ to the corresponding interlevelset $\XX_a^b$; the morphism associated to an inclusion $[a,b] \subseteq [c,d]$ is the inclusion $\XX_a^b \subseteq \XX_c^d$. We are interested, then, in understanding the composite functors
\[
\begin{tikzpicture}[xscale=1.5,yscale=1]
\draw (0,0) node(00){$\Int$}  ;
\draw (1.5,0) node(10) {$\Top$} ;
\draw (3,0) node(02) {$\Vect$} ;
\draw[->] (00) to node[above]{$\XX$}  (10); 
\draw[->] (10) to node[above]{$\Hgr$}  (02);
\end{tikzpicture}
\]
where $\Hgr$ is a homology functor with coefficients in a field, and $\Vect$ is the category of vectors spaces over that field.

The following can be viewed as a preliminary attempt to understand this functor:

\begin{example}[levelset zigzag persistent homology]
In~\cite{Zigzagpersistenthomologyandreal}, Carlsson et al.\ proposed the following protocol for studying an $\RR$-space $\XX = (X,f)$. Suppose $\XX$ is Morse-like, with critical values $a_1 < a_2 < \dots < a_n$. Let $s_0 < s_1 < \dots < s_n$ be a collection of `intercritical values', interleaved between the critical values in the sense that $s_{i-1} < a_i < s_i$. Then the zigzag diagram of topological spaces (and inclusion maps)
\[
\begin{tikzpicture}[xscale=1.2,yscale=1.2]
\draw
(1,1) node(11) {$\XX_{s_0}^{s_1}$}
(3,1) node(31) {$\XX_{s_1}^{s_2}$}
(5,1) node(51) {$\cdots$}
(7,1) node(71) {$\XX_{s_{n-1}}^{s_n}$}
;
\draw
(0,0) node(00) {$\XX_{s_0}^{s_0}$}
(2,0) node(20) {$\XX_{s_1}^{s_1}$}
(4,0) node(40) {$\XX_{s_2}^{s_2}$}
(6,0) node(60) {$\cdots$}
(8,0) node(80) {$\XX_{s_n}^{s_n}$}
;
\draw[->] (00) -- (11); 
\draw[->] (20) -- (11);
\draw[->] (20) -- (31);
\draw[->] (40) -- (31);
\draw[->] (40) -- (51);
\draw[->] (60) -- (71);
\draw[->] (80) -- (71);  
\end{tikzpicture}
\]
gives rise to a zigzag diagram of vector spaces (and linear maps)
\[
\begin{tikzpicture}[xscale=1.2,yscale=1.2]
\draw
(1,1) node(11) {$\Hgr(\XX_{s_0}^{s_1})$}
(3,1) node(31) {$\Hgr(\XX_{s_1}^{s_2})$}
(5,1) node(51) {$\cdots$}
(7,1) node(71) {$\Hgr(\XX_{s_{n-1}}^{s_n})$}
;
\draw
(0,0) node(00) {$\Hgr(\XX_{s_0}^{s_0})$}
(2,0) node(20) {$\Hgr(\XX_{s_1}^{s_1})$}
(4,0) node(40) {$\Hgr(\XX_{s_2}^{s_2})$}
(6,0) node(60) {$\cdots$}
(8,0) node(80) {$\Hgr(\XX_{s_n}^{s_n})$}
;
\draw[->] (00) -- (11); 
\draw[->] (20) -- (11);
\draw[->] (20) -- (31);
\draw[->] (40) -- (31);
\draw[->] (40) -- (51);
\draw[->] (60) -- (71);
\draw[->] (80) -- (71);  
\end{tikzpicture}
\]
whose indecomposable summands are recorded as the levelset zigzag barcode of~$\XX$. There are four types of bars, according as the ends of the summand lie in the top row or the bottom row of the diagram. Each bar is then associated with an open, closed, or half-closed real interval with endpoints in the set of critical values; the Morse-like assumption ensures that the interval is precisely the interval of persistence of the corresponding homological feature. 

In~\cite{Zigzagpersistenthomologyandreal} it is shown that the levelset zigzag barcode carries exactly the same information as the extended persistence barcodes of $(X,f)$ and of $(X,-f)$, as well as another related object called the `up-down persistence' barcode. The advantage of levelset zigzag over the other, equivalent, theories is that it is manifestly symmetrical with respect to symmetries of the real line. Moreover, fiberwise homological features are expressed in the correct dimension in this theory; no dimension shifts take place.
\end{example}

The main weakness of levelset zigzag persistence is that it is stubbornly discrete, in the sense that it is a forbidding prospect to try to take a continuous limit of the zigzag diagrams used in the theory.
Parametrized homology is our response to this weakness.
We take advantage of the theory of rectangle measures from~\cite{Thestructureandstability} to define four continuous-parameter persistence diagrams, corresponding to the four types of bars in the levelset zigzag barcode. Each diagram represents a set of homological features and carries information about how they perish at both ends of the interval over which they are defined.
The diagrams are stable with respect to perturbation of the function~$f$.

One advantage of using rectangle measures is that the proofs, in a certain sense, become `bounded'. In the levelset zigzag framework, in order to prove anything, one has to consider zigzag diagrams of arbitrary length. In the parametrized homology framework,  result can be expressed as statements about rectangle measures, and can be proved using specific diagrams of a fixed size. The proofs are generally very straightforward, once the appropriate `diagram calculus' has been mastered.

\subsection*{Outline}
In Section~\ref{sec:tools}, we review the algebraic machinery needed to define parametrized homology: zigzag modules, quiver representation diagrams, rectangle measures.

Section~\ref{sec:PH} comprises the main body of this paper.
In Section~\ref{subsec:4-measures}, we provisionally define four rectangle measures that will eventually yield the four persistence diagrams of parametrized homology. A certain homological tautness property is required for these measures to be additive; this is treated in Sections \ref{sec:tautness} and~\ref{sec:additivity}. Section~\ref{sec:finiteness} identifies conditions under which the measures are finite. Under these favourable conditions, the construction of the four persistence diagrams in Section~\ref{paramhom} is immediate. In Section~\ref{subsec:LZZ}, we show that parametrized homology exactly emulates levelset zigzag persistence in the discrete Morse-like case. Section~\ref{sec:16} is devoted to geometric considerations. Each of the four diagrams may contain features supported over open, closed and half-open intervals. We illustrate the sixteen possible behaviours, and show that only four of them occur in the compact case. Finally, in Section~\ref{subsec:stability} we prove the stability theorem, and in Section~\ref{sec:extended-persistence} establish the relationship with continuous-parameter extended persistence.

A brief discussion of parametrized cohomology, in Section~\ref{sec:parcoho}, concludes the paper.
\section{Algebraic Tools}
\label{sec:tools}

In this section, we review the tools from \cite{ZigzagPersistence} and~\cite{Thestructureandstability} that we use to develop parametrized homology invariants. Throughout this paper, vector spaces are taken to be over an arbitrary field $\mathbf k$. In certain instances, the field is specified.

\subsection{Zigzag modules}
\label{subsec:zigzag}

A \emph{zigzag module} $\VV$ of length~$n$ (see \cite{ZigzagPersistence}) is a sequence of vector spaces and linear maps between them
\[
\begin{tikzpicture}[xscale=1.2,yscale=1.2]
\draw (1,1) node(11) {$V_1$} (1.97,1) node(21) {$V_2$} (2.94,1) node(31){$\ldots$} (4,1) node(41) {$V_n.$}(4.15,1);

\draw[<->] (11) to node[above]{$$}  (21); 
\draw[<->] (21)  to node[above]{$$}  (31); 
\draw[<->] (31)  to node[above]{$$}  (41);
\end{tikzpicture}
\]
Each
\!\!\!\!\!
\begin{tikzpicture}[xscale=0.8,yscale=1]
\draw (1,1) node(11){} (2,1) node(21){};
\draw[<->] (11) --  (21); 
\end{tikzpicture}
\!\!\!\!\!
represents either a forward map
\!\!\!\!\!
\begin{tikzpicture}[xscale=0.8,yscale=1]
\draw (1,1) node(11){} (2,1) node(21){};
\draw[->] (11) --  (21); 
\end{tikzpicture}
\!\!\!\!\!
or a backward map
\!\!\!\!\!
$\begin{tikzpicture}[xscale=0.8,yscale=1]
\draw (1,1) node(11){} (2,1) node(21){};
\draw[<-] (11) --  (21); 
\end{tikzpicture}$
\!\!\!\!\!.
The particular choice of directions for a given zigzag module is called its \emph{shape}. If every map is a forward map the zigzag module is called a \emph{persistence module}~\cite{ZC}.

The basic building blocks of zigzag modules are the interval modules. Fix a shape of length~$n$. The interval module $\II{[p,q]}$ of that shape is the zigzag module
\[
\begin{tikzpicture}[xscale=1.2,yscale=1.2]
\draw (1.05,1) node(11) {$I_1$} (1.97,1) node(21) {$I_2$} (2.93,1) node(31){$\ldots$} (3.95,1) node(41) {$I_n$}(4.15,1);

\draw[<->] (11) to node[above]{$$}  (21); 
\draw[<->] (21)  to node[above]{$$}  (31); 
\draw[<->] (31)  to node[above]{$$}  (41);
\end{tikzpicture}
\]
where $I_i =\mathbf k$ for $p \leq i \leq q$, and $I_i =0$ otherwise, and where every
{\raisebox{-0.28em}{\begin{tikzpicture}[xscale=1,yscale=1]
\draw (1,0) node(11){$\mathbf k$} (2,0) node(21){$\mathbf k$};
\draw[->] (11) --(21); 
\end{tikzpicture}}}
or
{\raisebox{-0.28em}{\begin{tikzpicture}[xscale=1,yscale=1]
\draw (1,0) node(11){$\mathbf k$} (2,0) node(21){$\mathbf k$};
\draw[<-] (11) --(21); 
\end{tikzpicture}}}
is the identity map.

\begin{example}
Let $\VV_{\{1, 2, 3\}} = {\raisebox{-0.5em}{\begin{tikzpicture}[xscale=1.15,yscale=1.15]
\draw (1,1) node(11) {$V_1$} (2,1) node(21) {$V_2$} (3,1) node(31){$V_3$};

\draw[->] (11) to node[above]{$$}  (21); 
\draw[->] (21)  to node[above]{$$}  (31); 
\end{tikzpicture}}}$.  The six interval modules over $\VV$ may be represented pictorially as follows:
%
%
\begin{alignat*}{3}
\II[1,3] &= \onof{6}
\qquad
&
\II[2,3] &= \onof{5}
\qquad
&
\II[3,3] &= \onof{3}
\\
\II[1,2] &= \onof{4}
\qquad
&
\II[2,2] &= \onof{2}
\\
\II[1,1] &= \onof{1}
\end{alignat*}
Each dark green node represents a copy of the field~$\kk$ and each light pink node represents a copy of the zero vector space. Identity maps are represented by thickened green lines.
\end{example}
 
A theorem of Gabriel~\cite{quiver} implies that any finite-dimensional zigzag module can be decomposed as a direct sum of interval modules. The extension to infinite-dimensional zigzag modules follows from a theorem of Auslander~\cite{Auslander}. The list of summands that appear in the decomposition is an isomorphism invariant of~$\VV$ by the Krull--Schmidt--Azumaya theorem~\cite{Azumaya}. We call this isomorphism invariant the \emph{zigzag persistence} of~$\VV$.

\begin{example}\label{zigps}
Consider a zigzag diagram $\XX$ of topological spaces and continuous maps between them:
\[
\begin{tikzpicture}[xscale=1.25,yscale=1.2]
\draw (1,1) node(11) {$X_1$} (2,1) node(21) {$X_2$} (3,1) node(31){$\ldots$} (4,1) node(41) {$X_n$};

\draw[<->] (11) to node[above]{$$}  (21); 
\draw[<->] (21)  to node[above]{$$}  (31); 
\draw[<->] (31)  to node[above]{$$}  (41);
\end{tikzpicture}
\]
We get a zigzag module $\Hgr\XX$ by applying a homology functor $\Hgr = \Hgr_j(-; \kk)$ to this diagram.
Decomposing the diagram, we can write
\[
\begin{tikzpicture}[xscale=1.63,yscale=1.2]
\draw
(0.75,1) node(11) {$\Hgr_j(X_1)$} (2,1) node(21) {$\Hgr_j(X_2)$} (3,1) node(31){$\ldots$} (4,1) node(41) {$\Hgr_j(X_n)$} (4.85,1) node(51){$\cong$} (6,1) node(61){$\bigoplus_{i\in I}\II[p_i, q_i].$};
\draw[<->] (11) to node[above]{$$}  (21); 
\draw[<->] (21)  to node[above]{$$}  (31); 
\draw[<->] (31)  to node[above]{$$}  (41);
\end{tikzpicture}
\]
The zigzag persistent homology of $\XX$ (for the functor~$\Hgr$) is then the multiset of intervals $[p_i,q_i]$ in the interval decomposition.
\end{example}

\begin{definition}
The \emph{multiplicity} of an interval $[p,q]$ in a zigzag module~$\VV$ is the number of copies of $\II[p,q]$ that occur in the interval decomposition of $\VV$. This number is written
\[
\langle [p,q] \mid \VV \rangle
\]
and takes values in the set $\{ 0, 1, 2, \ldots, \infty \}$. (For our purposes we do not need to distinguish different infinite cardinals.)
Finally, the \emph{persistence diagram} of~$\VV$ is the multiset
\[
\Dgm(\VV)
\quad \text{in} \quad
\{ (p,q) \mid 1 \leq p \leq q \leq n \}
\]
defined by the multiplicity function $(p,q) \mapsto \langle [p,q] \mid \VV \rangle$.
\end{definition}

We will often use pictorial notation for these multiplicities. For example, given a persistence module
$
\VV = 
 {\raisebox{-0.5em}{\begin{tikzpicture}[xscale=1.2,yscale=1.2]
\draw (1,1) node(11) {$V_1$} (2,1) node(21) {$V_2$} (3,1) node(31){$V_3$};
\draw[->] (11) to node[above]{$$}  (21); 
\draw[->] (21)  to node[above]{$$}  (31); 
\end{tikzpicture}}}
$
we may write
\[
\langle [2,3] \mid \VV \rangle
\quad \text{or} \quad
\langle \onof{5} \mid \VV \rangle
\quad \text{or simply} \quad
\langle \onof{5} \rangle
\]
for the multiplicity of $\II[2,3]$ in~$\VV$.

\subsection{Two calculation principles}
\label{subsec:2calc}

There are two methods from~\cite{ZigzagPersistence} that we repeatedly use to calculate multiplicities: the Restriction Principle and the Diamond Principle.

\begin{theorem}[Restriction Principle]\label{RP}
Let $\VV$ be a zigzag module with two consecutive maps in the same direction
\[
\begin{tikzpicture}[xscale=1.2,yscale=1]
\draw (0.85,1) node(11) {$V_1$} (1.9,1) node(21) {$V_2$} (2.95,1) node(31){$\ldots$} (4.15,1) node(41) {$V_{k-1}$} (5.30,1) node(51){$V_k$}  (6.30,1) node(61){$V_{k+1}$} (7.35,1) node(71){$\ldots$} (8.45,1) node(81){$V_n$}  ;
\draw[<->] (11) to node[above]{$$}  (21); 
\draw[<->] (21)  to node[above]{$$}  (31); 
\draw[<->] (31)  to node[above]{$$}  (41);
\draw[->] (41) to node[above]{$\scriptstyle g$}  (51); 
\draw[->] (51)  to node[above]{$\scriptstyle h$}  (61); 
\draw[<->] (61)  to node[above]{$$}  (71);
\draw[<->] (71)  to node[above]{$$}  (81);
\end{tikzpicture}
\]
and let $\WW$ be the zigzag module
\[
\begin{tikzpicture}[xscale=1.2,yscale=1]
\draw (0.85,1) node(11) {$V_1$} (1.9,1) node(21) {$V_2$} (2.95,1) node(31){$\ldots$} (4.15,1) node(41) {$V_{k-1}$} (6.30,1) node(61){$V_{k+1}$} (7.35,1) node(71){$\ldots$} (8.45,1) node(81){$V_n$}  ;
\draw[<->] (11) to node[above]{$$}  (21); 
\draw[<->] (21)  to node[above]{$$}  (31); 
\draw[<->] (31)  to node[above]{$$}  (41);
\draw[->] (41) to node[above]{$\scriptstyle hg$}  (61); 
\draw[<->] (61)  to node[above]{$$}  (71);
\draw[<->] (71)  to node[above]{$$}  (81);
\end{tikzpicture}
\]
obtained by combining those maps into a single composite map and deleting the intermediate vector space~$V_k$. Let $[p,q]$ be an interval over the index set for~$\WW$ (so $p,q \ne k$).
Then
\[
\langle [p, q]\,|\, \WW \rangle = \sum_{[\hat{p}, \hat{q}]} \langle [\hat{p}, \hat{q}]\,|\, \VV \rangle
\]
where the sum is over those intervals $[\hat{p},\hat{q}]$ over the index set for~$\VV$ that restrict to $[p,q]$ over the index set of $\WW$.
\end{theorem}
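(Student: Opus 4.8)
The plan is to reduce the claim to a computation on a single interval module. Write the operation $\VV\mapsto\WW$ --- delete the space $V_k$ and replace $V_{k-1}\xrightarrow{g}V_k\xrightarrow{h}V_{k+1}$ by the composite $V_{k-1}\xrightarrow{hg}V_{k+1}$ --- as $R$. First I would check that $R$ is functorial on zigzag modules of the shape of $\VV$: a morphism $\phi=(\phi_j)$ commutes with all structure maps, and then $\phi_{k+1}\circ h\circ g=h'\circ\phi_k\circ g=h'\circ g'\circ\phi_{k-1}$, so the truncated family $(\phi_j)_{j\ne k}$ is a morphism $R\VV\to R\VV'$. Since direct sums of zigzag modules are formed slotwise, one reads off directly that $R$ commutes with arbitrary direct sums. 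Hence, fixing an interval decomposition $\VV\cong\bigoplus_{i\in I}\II[\hat p_i,\hat q_i]$ --- which exists by the theorems of Gabriel and Auslander and whose summand list is well defined by the Krull--Schmidt--Azumaya theorem --- we obtain $\WW=R\VV\cong\bigoplus_{i\in I}R\bigl(\II[\hat p_i,\hat q_i]\bigr)$.

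It remains to compute $R(\II[\hat p,\hat q])$ for an arbitrary interval $[\hat p,\hat q]$ over the index set of $\VV$. In an interval module every structure map is an identity map of $\kk$ or a zero map, and identity-or-zero maps compose to identity-or-zero maps; so $R(\II[\hat p,\hat q])$ is a zigzag module over the index set of $\WW$ whose spaces are copies of $\kk$ or $0$ and whose maps are identities or zeros. Going through the cases --- $[\hat p,\hat q]$ lies strictly to the left of $k$, strictly to the right, straddles $k$ (that is, $\hat p\le k-1$ and $\hat q\ge k+1$), equals $\{k\}$, or has exactly one endpoint equal to $k$ --- one verifies in each case that the support is again an interval over the index set of $\WW$ and that the new edge $hg$ between the now-adjacent slots $k-1$ and $k+1$ is an identity exactly when that support runs across the deleted slot. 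The conclusion is that $R(\II[\hat p,\hat q])\cong\II[p,q]$ whenever $[\hat p,\hat q]$ restricts to $[p,q]$ in the sense of the statement, and $R(\II[k,k])=0$.

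Combining: discarding zero summands, $\WW\cong\bigoplus\II[p_i,q_i]$, and since the multiset of summands of an interval decomposition is an isomorphism invariant, $\langle[p,q]\mid\WW\rangle$ is the number of indices $i$ with $[p_i,q_i]=[p,q]$, which equals $\sum_{[\hat p,\hat q]}\langle[\hat p,\hat q]\mid\VV\rangle$ over the intervals restricting to $[p,q]$ (adopting the convention that infinite multiplicities are not distinguished). The case of two consecutive \emph{backward} maps is handled identically after reversing all arrows. I expect no serious obstacle here; the only delicate point is the boundary bookkeeping in the case analysis --- especially $\hat p=k$, $\hat q=k$, $\hat p=k-1$, and $\hat q=k+1$ --- where one must confirm that the set-theoretic ``delete the index $k$'' on intervals agrees summand by summand with the functor $R$.
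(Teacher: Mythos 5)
Your argument is correct and takes essentially the same route as the paper: fix an interval decomposition of~$\VV$, observe that the restriction operation respects direct sums, and then read off summand-by-summand what it does to interval modules. The paper states this in three sentences; you have simply supplied the functoriality check, the slotwise direct-sum compatibility, and the case analysis that the paper leaves implicit.
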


\begin{proof}
Take an arbitrary interval decomposition of $\VV$. This induces an interval decomposition of $\WW$. Summands of $\WW$ of type $[p,q]$ arise precisely from summands of $\VV$ of types $[\hat{p},\hat{q}]$ that restrict to $[p, q]$ over the index set of $\WW$.
\end{proof}

\begin{example}
Consider a zigzag module
\[
\VV = 
{\raisebox{-0.5em}{\begin{tikzpicture}[xscale=1.1,yscale=1]
\draw (1,1) node(11) {$V_1$} (2,1) node(21) {$V_2$} (3,1) node(31){$V_3$}(4, 1) node(41){$V_4$} (5, 1)node(51){$V_5$};
\draw[->] (11) to node[above]{$$}  (21); 
\draw[->] (21)  to node[above]{$$}  (31); 
\draw[<-] (31)  to node[above]{$$}  (41); 
\draw[<-] (41)  to node[above]{$$}  (51);
\end{tikzpicture}}}
\]
and its restrictions
\begin{align*}
\VV_{1,2,3,5}
&=
{\raisebox{-0.5em}{\begin{tikzpicture}[xscale=1.1,yscale=1]
\draw
(1,1) node(11) {$V_1$}
(2,1) node(21) {$V_2$}
(3,1) node(31){$V_3$}
(5,1) node(51){$V_5$};
\draw[->] (11) to node[above]{$$}  (21); 
\draw[->] (21)  to node[above]{$$}  (31); 
\draw[<-] (31)  to node[above]{$$}  (51); 
\end{tikzpicture}}}
\\
\VV_{1,3,4,5}
&=
{\raisebox{-0.5em}{\begin{tikzpicture}[xscale=1.1,yscale=1]
\draw
(1,1) node(11) {$V_1$}
(3,1) node(31){$V_3$}
(4,1) node(41){$V_4$}
(5,1) node(51){$V_5$};
\draw[->] (11) to node[above]{$$}  (31); 
\draw[<-] (31)  to node[above]{$$}  (41); 
\draw[<-] (41)  to node[above]{$$}  (51); 
\end{tikzpicture}}}
\end{align*}
obtained in the manner described above.
Then
\begin{align*}
\langle \eksample{2} \mid \VV_{1,2,3,5} \, \rangle
&=
\langle \eksample{1} \mid \VV \, \rangle,
\\
\langle \eksample{4} \mid \VV_{1,3,4,5} \, \rangle
&=
\langle \eksample{1} \mid \VV \, \rangle
+
\langle \eksample{6} \mid \VV \, \rangle.
\end{align*}
The extra term occurs when the interval for the restricted module abuts the long edge on either side (so there is both a clear node and a filled node at that edge). There are then two possible intervals which restrict to it.
\end{example}

The Diamond Principle relates the interval multiplicities of zigzag modules that are related by a different kind of local change.
The principle is most sharply expressed in terms of the reflection functors of Bernstein, Gelfand and Ponomarev~\cite{reflection}. We make do with a simpler non-functorial statement. We say that a diamond-shaped commuting diagram of vector spaces
\[
\begin{tikzpicture}[xscale=1,yscale=1]
\draw (1.5,1.5) node(11) {$C$} (-1.5,1.5) node(011) {$B$} (0,3) node(02) {$D$} ;
\draw (0,0) node(00){$A$} ;
\draw[->] (00) to node[below]{$~i_2$}  (11); 
\draw[->] (00) to node[below]{$i_1$}  (011);
\draw[->] (011) to node[left]{$j_1~$}  (02);
\draw[->] (11) to node[right]{$~j_2$}  (02);
\end{tikzpicture}
\]
is \emph{exact} if the sequence
\[
\begin{tikzpicture}[xscale=1.8,yscale=1]
\draw (1.5,0) node(10) {$B\oplus C$} (3,0) node(02) {$D$} ;
\draw (0,0) node(00){$A$}  ;
\draw[->] (00) to node[above]{{\small$i_1\oplus i_2$}}  (10); 
\draw[->] (10) to node[above]{$j_1-j_2$}  (02);
\end{tikzpicture}
\]
is exact at $B \oplus C$. This means that a pair of vectors $\beta \in B$, $\gamma \in C$ satisfies $j_1(\beta) = j_2(\gamma)$ if and only if there exists $\alpha \in A$ such that $\beta = i_1(\alpha)$ and $\gamma  = i_2(\alpha)$.

\begin{theorem}[Diamond Principle~\cite{ZigzagPersistence}]
\label{DP}
Consider a diagram of vector spaces
\begin{center}
\begin{tikzpicture}[xscale=1,yscale=1]

\draw (-2.3,0) node(00){$V_1$} ;
\draw (-1,0) node(10){$\ldots$} ;
\draw (0.5,0) node(110){$V_{k-2}$} ;
\draw (2,0) node(20){$V_{k-1}$} ;
\draw (3,1) node(31){$V_k^+$} ;
\draw (3,-1) node(301){$V_k^-$};
\draw (4,0) node(40){$V_{k+1}$};
\draw (5.5,0) node(550){$V_{k+2}$};
\draw (7,0) node(50){$\ldots$};
\draw (8.3,0) node(60){$V_{n}$};

\draw[<->] (00) to node[above]{$$} (10); 
\draw[<->] (10) to node[above]{$$}(110);
\draw[<->] (110) to node[above]{$$} (20);
\draw[->] (20) to node[left]{} (31);
\draw[->] (40) to node[right]{} (31); 
\draw[->] (301) to node[left]{} (20);
\draw[->] (301) to node[right]{} (40); 
\draw[<->] (40) to node[above]{$$} (550); 
\draw[<->] (550) to node[above]{$$} (50); 
\draw[<->] (50) to node[above]{$$} (60); 

\end{tikzpicture}
\end{center}
where the middle diamond is exact.
Let $\VV^+, \VV^-$ respectively denote the upper zigzag module (containing $V_k^+$) and the lower zigzag module (containing $V_k^-$) in this diagram. Then the following multiplicities are equal.

(i) If the interval $[p,q]$ does not meet $\{k-1, k, k+1\}$ then
\[
\langle [p,q] \mid \VV^+ \rangle
=
\langle [p,q] \mid \VV^- \rangle.
\]

(ii) If the interval $[p,q]$ completely contains $\{k-1, k, k+1\}$ then
\[
\langle [p,q] \mid \VV^+ \rangle
=
\langle [p,q] \mid \VV^- \rangle.
\]

(iii) For $p \leq k-1$ we have
\begin{alignat*}{4}
&\langle [p,k] &&\mid \VV^+ \rangle
&&= \langle [p,k-1] &&\mid \VV^- \rangle,
\\
&\langle [p,k-1] &&\mid \VV^+ \rangle
&&= \langle [p,k] &&\mid \VV^- \rangle.
\end{alignat*}

(iv) For $q \geq k+1$ we have
\begin{alignat*}{4}
&\langle [k,q] &&\mid \VV^+ \rangle
&&= \langle [k+1,q] &&\mid \VV^- \rangle,
\\
&\langle [k+1,q] &&\mid \VV^+ \rangle
&&= \langle [k,q] &&\mid \VV^- \rangle.
\end{alignat*}

\noindent
The diagrams
\begin{center}
\raisebox{2pc}{(ii)}
\includegraphics[scale=2.5,page=5]{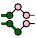}
\qquad
\raisebox{2pc}{(iii)}
\includegraphics[scale=2.5,page=2]{diamond-principle}
\includegraphics[scale=2.5,page=1]{diamond-principle}
\qquad
\raisebox{2pc}{(iv)}
\includegraphics[scale=2.5,page=4]{diamond-principle}
\includegraphics[scale=2.5,page=3]{diamond-principle}
\end{center}
express the last three of these rules pictorially.
\qed
\end{theorem}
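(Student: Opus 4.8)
The plan is to reduce everything to a purely local statement about the small diamond and then transport the conclusion along the unchanged tails of the zigzag using the Restriction Principle (Theorem~\ref{RP}). First I would observe that the tails $V_1 \leftrightarrow \dots \leftrightarrow V_{k-1}$ and $V_{k+1} \leftrightarrow \dots \leftrightarrow V_n$ are common to $\VV^+$ and $\VV^-$, so after taking interval decompositions of both modules the only summands that can differ are those whose supporting interval meets $\{k-1,k,k+1\}$; parts (i) and (ii) are then immediate, since an interval disjoint from $\{k-1,k,k+1\}$ or one containing all three indices restricts identically to both sub-diagrams obtained by deleting the other half of the zigzag, so the Restriction Principle equates the two multiplicities. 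The substance of the theorem is in parts (iii) and (iv), and by the evident left--right symmetry of the exactness hypothesis it suffices to prove (iii).

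For (iii) I would first use the Restriction Principle to collapse each tail onto a single vector space. Precisely, replace the left tail by the composite map $V_{k-1} \to V_k^{\pm}$ (there is no composite needed on the left — the map already lands in $V_k^{\pm}$) and, on the right, collapse $V_{k+1} \leftrightarrow V_{k+2} \leftrightarrow \dots$ so that the only thing past index $k+1$ is a single space; the point is that for an interval of the form $[p,k]$ or $[p,k-1]$ with $p \le k-1$, the Restriction Principle says its multiplicity in $\VV^{\pm}$ is recovered (with a possibly adjusted left endpoint, but the right endpoint behaviour near $k$ is unaffected) from the multiplicity in the collapsed module. This cuts the problem down to a zigzag of bounded length: something of the shape $(\dots \to V_{k-1} \to V_k^{+} \leftarrow V_{k+1} \to \dots)$ versus $(\dots \to V_{k-1} \leftarrow V_k^{-} \to V_{k+1} \to \dots)$, with exactly one diamond in the middle. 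On such a short diagram one can decompose explicitly: write down a basis of $A$ adapted to $\ker i_1$, $\ker i_2$, and a complement, push it through $i_1, i_2$ into $B = V_{k-1}$ and $C = V_{k+1}$, and extend to bases of $B$, $C$, $D = V_k^{\mp}$ compatible with the exactness of $A \to B\oplus C \to D$. Bookkeeping the indecomposable summands produced on each side, one sees that a summand of $\VV^+$ of type $[p,k]$ (its right end "dies going into $V_k^+$" from the left) corresponds to a summand of $\VV^-$ of type $[p,k-1]$, and a summand of $\VV^+$ of type $[p,k-1]$ corresponds to one of $\VV^-$ of type $[p,k]$ — this is exactly the interchange asserted in~(iii). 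Part~(iv) follows by the mirror-image argument (or by applying the opposite-category duality that turns the diamond upside down).

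The main obstacle is the explicit basis bookkeeping in the collapsed diamond: one must check that the exactness condition $\operatorname{Im}(i_1\oplus i_2) = \ker(j_1 - j_2)$ is precisely what makes the two local decompositions match up, with no stray summands appearing on one side but not the other, and that the endpoints behave correctly when an interval abuts the diamond from the left with its right end exactly at $k-1$ versus at $k$. I would handle this by treating it as a classification of representations of a four-vertex quiver with one sink (respectively source) in the middle, reading off the finite list of indecomposables in each shape; the exact-diamond hypothesis is exactly the statement that the two middle vertices carry "the same" representation data up to the reflection that flips the arrows at the central vertex, which is the non-functorial shadow of the Bernstein--Gelfand--Ponomarev reflection functor referenced before the statement. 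Once the local bijection of indecomposables is in hand, the Restriction Principle propagates it back out to the full-length zigzags, giving all four cases.
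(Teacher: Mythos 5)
The paper itself offers \emph{no} proof of the Diamond Principle: it is stated with a terminal \qed and cited to~\cite{ZigzagPersistence}, so there is no in-paper argument to compare yours against. Evaluating your sketch on its own terms, however, there are concrete gaps.

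Your argument for part~(i) is sound: if $[p,q]$ misses $\{k-1,k,k+1\}$, it is entirely contained in one of the two common tails, and since $\VV^+$ and $\VV^-$ are literally identical on those tails, truncation to the relevant tail preserves the multiplicity of $[p,q]$ exactly (no longer interval can restrict to it without hitting $k-1$ or $k+1$). But part~(ii) is \emph{not} immediate by the same reasoning, contrary to what you assert. If $[p,q]$ contains all of $\{k-1,k,k+1\}$, then truncating to the left tail collapses $[p,q]$ to $[p,k-1]$, and it is no longer the unique preimage: every $[p,q']$ with $q' \ge k-1$ collapses to the same $[p,k-1]$. Truncation therefore only gives an equality of sums $\sum_{q' \ge k-1}\langle[p,q']\mid\VV^+\rangle = \sum_{q' \ge k-1}\langle[p,q']\mid\VV^-\rangle$ (and the mirror identity for the right tail), which does not separate the individual multiplicities. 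Part~(ii) genuinely needs parts~(iii)--(iv), or equivalently the full diamond/reflection analysis, as input.

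Your reduction step for~(iii) is also invalid as stated. The Restriction Principle (Theorem~\ref{RP}) collapses only two \emph{consecutive arrows in the same direction}; it cannot collapse an arbitrary right tail $V_{k+1}\leftrightarrow V_{k+2}\leftrightarrow\cdots\leftrightarrow V_n$ to a single space, since that tail may have arrows alternating in direction. Moreover, even where the Restriction Principle does apply, the passage from the restricted module back to the original gives sums of multiplicities, not the multiplicities themselves, so your parenthetical ``with a possibly adjusted left endpoint'' is concealing the very information the theorem is about. Finally, the central step --- the explicit basis bookkeeping across the exact diamond, or equivalently the verification that the BGP reflection functor at vertex $k$ sends each interval summand of $\VV^-$ to the asserted interval summand of $\VV^+$ (away from $[k,k]$) --- is the actual content of the theorem, and you identify it as ``the main obstacle'' without carrying it out. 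Your reference to the reflection functors of Bernstein--Gelfand--Ponomarev is the right conceptual pointer, and is precisely the route taken in~\cite{ZigzagPersistence}; what is missing is the execution of that local argument, together with a repair of the reduction steps noted above.
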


\begin{remark}
The theorem gives no information about $\langle [k,k] \mid \VV^+ \rangle$ or $\langle [k,k] \mid \VV^- \rangle$. These quantities are independent of each other and of all other multiplicities.
\end{remark}

We use the Diamond Principle frequently in the following situation. Consider a diagram of topological spaces of the following form:
\begin{center}
\begin{tikzpicture}[xscale=1,yscale=1]

\draw (-2.3,0) node(00){$X_1$} ;
\draw (-1,0) node(10){$\ldots$} ;
\draw (0.5,0) node(110){$X_{k-2}$} ;
\draw (2,0) node(20){$A$} ;
\draw (3,1) node(31){$A \cup B$} ;
\draw (3,-1) node(301){$A\cap B$};
\draw (4,0) node(40){$B$};
\draw (5.5,0) node(550){$X_{k+2}$};
\draw (7,0) node(50){$\ldots$};
\draw (8.3,0) node(60){$X_{n}$};

\draw[<->] (00) to node[above]{$$} (10); 
\draw[<->] (10) to node[above]{$$}(110);
\draw[<->] (110) to node[above]{$$} (20);
\draw[->] (20) to node[left]{} (31);
\draw[->] (40) to node[right]{} (31); 
\draw[->] (301) to node[left]{} (20);
\draw[->] (301) to node[right]{} (40); 
\draw[<->] (40) to node[above]{$$} (550); 
\draw[<->] (550) to node[above]{$$} (50); 
\draw[<->] (50) to node[above]{$$} (60); 

\end{tikzpicture}
\end{center}
Here $A,B$ are subspaces of some common ambient space. Applying a homology functor $\Hgr$, we obtain an upper zigzag diagram~$\VV^\cup$ and a lower zigzag diagram~$\VV^\cap$. The exactness of the diamond is precisely the exactness of the central term in the following excerpt from the Mayer--Vietoris sequence:
\[
\begin{tikzpicture}[xscale=3,yscale=1]
\draw
(0.3,1) node(01) {$\ldots$}
(1,1) node(11) {$ \Hgr(A \cap B)$}
(2,1) node(21) {$ \Hgr(A)\oplus  \Hgr(B)$}
(3,1) node(31){$ \Hgr(A \cup B)$}
(3.7,1) node(41) {$\ldots$};

\draw[->] (01) to node[above]{$$}  (11); 
\draw[->] (11) to node[above]{$$}  (21); 
\draw[->] (21)  to node[above]{$$}  (31); 
\draw[->] (31)  to node[above]{$$}  (41);
\end{tikzpicture}
\]
In situations where the Mayer--Vietoris theorem holds, we can use the Diamond Principle to compare the interval summands of $\VV^\cup$ and~$\VV^\cap$. The reader is reminded that the Mayer--Vietoris theorem is not always applicable. We treat this matter carefully in Section~\ref{sec:tautness}.

\subsection{Persistence diagrams and measures}
\label{subsec:measures}

As we discussed in Section 2.1, a zigzag module with a finite index set decomposes into interval modules, the list of summands being unique up to reordering. There are finitely many interval module types, so the structure of the zigzag module is determined by a finite list of multiplicities.

On the other hand, the objects we are studying are spaces parametrized over the real line; and so we will want to define continuous-parameter persistence diagrams. The motivating heuristic is that each topological feature will be supported over some interval of~$\RR$. These intervals may be open, closed or half-open, so we follow Chazal et al.~\cite{Thestructureandstability} in describing their endpoints as real numbers \emph{decorated} with a $^+$~or~$^-$ superscript. The superscript~$^*$ may be used for an unspecified decoration. Here are the four options:

\medskip
\begin{center}
\begin{tabular}{|c|c|c|}
\hline
\footnotesize{interval}
&
\footnotesize{decorated pair}
&
\footnotesize{point with tick}
\\
\hline
\hline
$(p,q)$ & $(p^+,q^-)$
& \raisebox{-0.6em}{\includegraphics[scale=1.25]{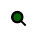}}
\\
\hline
$(p,q]$ & $(p^+,q^+)$
& \raisebox{-0.6em}{\includegraphics[scale=1.25]{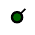}}
\\
\hline
$[p,q)$ & $(p^-,q^-)$
& \raisebox{-0.6em}{\includegraphics[scale=1.25]{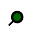}}
\\
\hline
$[p,q]$ & $(p^-,q^+)$
& \raisebox{-0.6em}{\includegraphics[scale=1.25]{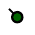}}
\\
\hline
\end{tabular}
\end{center}
\medskip

\noindent
Except for the degenerate interval $[p, p] = (p^-, p^+)$, we require $p < q$. For infinite intervals, we allow $p = -\infty$ and $q = +\infty$ and their decorated forms $p^* = -\infty^+$ and $q^* = +\infty^-$.

Given a collection (i.e.\ multiset) of such intervals, we can form a persistence diagram by drawing each $(p^*,q^*)$ as a point in the plane with a tick to indicate the decorations. The tick convention is self-explanatory. The diagram resides in the extended half-plane
\[
\Hp =\{ (p,q)  \mid -\infty \leq p < q \leq \infty \}
\]
which we can draw schematically as a triangle.
If we omit the ticks (i.e.\ forget the decorations), what remains is an \emph{undecorated} persistence diagram.

Our main mechanism for defining and studying continuous-parameter persistence modules is taken from~\cite{Thestructureandstability}: a finite measure theory designed for this task.
Define
\[
\Rect(\Hp) = \{[a, b]\times [c,d]\subset \Hp \, |\, -\infty \leq a < b < c < d \leq +\infty\}.
\]
This consists of finite rectangles, horizontal semi-infinite strips, vertical semi-infinite strips and infinite quadrants in~$\Hp$.
A rectangle measure or \emph{r-measure} on $\Hp$ is a function 
\[
\begin{tikzpicture}[xscale=1.3,yscale=1]
\draw (0.1,0)node(11) {$\mu \colon \Rect(\Hp)$} (3,0) node(21){$ \{0, 1, 2, 3, \ldots \}\cup \{\infty \}$};
\draw[->] (11) -- (21); 
\end{tikzpicture}
\]
that is additive with respect to splitting a rectangle horizontally or vertically into two rectangles. Explicitly, we require
\begin{alignat*}{2}
\mu([a,b] \times [c,d])
&=
\mu([a,p] \times [c,d])
+
\mu([p,b] \times [c,d])
&&
\qquad \text{\footnotesize (horizontal split)}
\\
\mu([a,b] \times [c,d])
&=
\mu([a,b] \times [c,q])
+
\mu([a,b] \times [q,d])
&&
\qquad \text{\footnotesize (vertical split)}
\end{alignat*}
whenever $a < p < b < c < q < d$ (see Figure~\ref{fig1}). By iterating these formulas, it follows that $\mu$ must be additive with respect to arbitrary tilings of a rectangle by other rectangles. This implies, in particular, that $\mu$ is monotone with respect to inclusion of rectangles.

\begin{figure}[h!]
\includegraphics[scale=1.2]{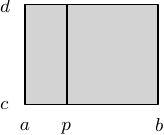}
\qquad\qquad
\includegraphics[scale=1.2]{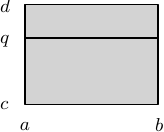}
\label{fig1}\caption{Rectangles split horizontally and vertically.}
\end{figure}

The `atoms' for this measure theory are decorated points rather than points; when a rectangle is split in two, points along the split line have to be assigned to one side or the other and this is done using the tick. We write $(p^*,q^*) \in R$ to mean that $(p,q)$ lies in~$R$ with the tick pointing into the interior of~$R$ (this is automatic for interior points).

\medskip
\begin{figure}[h!]
\includegraphics[scale=1.2]{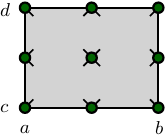} 
\caption{A decorated point $(p^*,q^*)$ is contained in $R$ if and only if $(p, q)$ is contained in $R$ and the tick points into the interior.}
\end{figure}

\begin{theorem}[{\cite[Theorem 3.12]{Thestructureandstability}}]
\label{equivextended}
There is a bijective correspondence between
\begin{itemize}
\item
Finite r-measures $\mu$ on $\Hp$; and
\smallskip
\item
Locally finite multisets $A$ of decorated points in $\Hp$.
\end{itemize}
Here `finite' means that ${\mu(R)<\infty}$ for all~$R$, and `locally finite' means that ${\card(A|_{R})<\infty}$ for all~$R$.
Explicitly, a multiset~$A$ corresponds to the measure~$\mu$ defined by the formula
\[
\mu(R) = \card(A|_{R}),
\]
(the cardinality of the multiset of decorated points of~$A$ that belong to~$R$);
and, conversely, a measure~$\mu$ corresponds to the multiset~$A$ with multiplicity function
\[
\mathrm{m}_A(p^*, q^*)
=
\min \{\mu(R)\, \mid\,
R \in \Rect(\Hp) \;\text{\rm such that}\; (p^*, q^*) \in R \}.
\]
In other words, finite r-measures correspond exactly to decorated persistence diagrams.
\qed
\end{theorem}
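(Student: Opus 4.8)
The plan is to construct both maps of the claimed bijection explicitly and verify they are mutually inverse, with essentially all the work concentrated in showing that a finite r-measure is recovered from its associated multiset. Write $\mu_A(R)=\card(A|_R)$ for the measure attached to a locally finite multiset $A$, and write $A_\mu$ for the multiset whose multiplicity function is $\mathrm{m}_\mu(p^*,q^*)=\min\{\mu(R)\mid (p^*,q^*)\in R\}$. The first easy point is that $\mu_A$ is a finite r-measure: finiteness is exactly local finiteness of $A$, and additivity under a horizontal or vertical split holds because the tick convention assigns each decorated point of $A$ lying on the splitting line to exactly one of the two halves (a point with first coordinate $p$ on a vertical cut at $x=p$ goes into the left piece if its decoration is $p^-$ and into the right piece if it is $p^+$, and symmetrically on the other side). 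Conversely, for a finite r-measure $\mu$ the set $\{\mu(R)\mid (p^*,q^*)\in R\}$ is a nonempty subset of $\{0,1,2,\dots\}\cup\{\infty\}$, hence has a least element, so $\mathrm{m}_\mu$ is well defined.

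The workhorse for everything else is a \emph{packing estimate}: given a rectangle $R$, finitely many \emph{distinct} decorated points $x_1,\dots,x_M\in R$, and positive integers $n_i\le\mathrm{m}_\mu(x_i)$, one has $\sum_i n_i\le\mu(R)$. To prove it, choose a grid subdivision of $R$ whose cut lines include all coordinates occurring among the $x_i$; then each $x_i$ is captured by a unique cell $C_i$, and distinct decorated points land in distinct cells even when they share a location (they must then differ in some tick, which selects a different adjacent cell). Since $x_i\in C_i$ we get $\mu(C_i)\ge\mathrm{m}_\mu(x_i)\ge n_i$, and additivity over the grid gives $\mu(R)\ge\sum_i\mu(C_i)\ge\sum_i n_i$. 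Two consequences follow at once. First, $A_\mu$ is locally finite: if some $x\in R$ had $\mathrm{m}_\mu(x)=\infty$ then $\mu(R)=\infty$, and if infinitely many distinct decorated points of positive multiplicity lay in $R$ the estimate would force $\mu(R)=\infty$; either contradicts finiteness. Second, $\card(A_\mu|_R)=\sum_{x\in R}\mathrm{m}_\mu(x)\le\mu(R)$ for every $R$. The packing estimate also settles the round trip $A_{\mu_A}=A$: the inequality $\mathrm{m}_{\mu_A}(x)\ge\mathrm{m}_A(x)$ is clear since any $R\ni x$ contains all copies of $x$, and for the reverse one uses local finiteness of $A$ to pick a small rectangle $R\ni x$ whose coordinates and decorations hug those of $x$ closely enough to exclude the finitely many other decorated points of $A$ near $x$ — including the other decorations sharing $x$'s location — so that $\card(A|_R)=\mathrm{m}_A(x)$.

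It remains to show $\card(A_\mu|_R)\ge\mu(R)$, i.e.\ $\mu_{A_\mu}=\mu$. I would prove this by strong induction on the (finite) value $\mu(R)$, simultaneously over all finite r-measures; the case $\mu(R)=0$ is trivial. The crucial input is the \emph{key lemma}: if $\mu(S)\ge1$ then there is a decorated point $x\in S$ with $\mathrm{m}_\mu(x)\ge1$. Granting this, choose such an $x_0\in R$ and set $\nu(R')=\mu(R')-\mathbf{1}[x_0\in R']$. Using the split convention one checks that $\nu$ is again a finite r-measure; it is nonnegative because $\mathrm{m}_\mu(x_0)\ge1$ forces $\mu(R')\ge1$ whenever $x_0\in R'$; and, using the packing estimate to see that $\mu(R')\ge\mathrm{m}_\mu(x)+1$ whenever $R'$ contains $x_0$ together with a second decorated point $x\ne x_0$ of positive multiplicity, one finds that $A_\nu$ is precisely $A_\mu$ with one copy of $x_0$ deleted. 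Since $\nu(R)=\mu(R)-1$, the inductive hypothesis yields $\card(A_\nu|_R)\ge\mu(R)-1$, whence $\card(A_\mu|_R)=\card(A_\nu|_R)+1\ge\mu(R)$, closing the induction.

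The main obstacle is the key lemma, and I would prove it by a compactness argument exploiting that a rectangle $S\in\Rect(\Hp)$ is a \emph{compact} subset of $[-\infty,\infty]^2$. Suppose $\mathrm{m}_\mu(x)=0$ for every decorated point $x\in S$; I claim $\mu(S)=0$. For each topological point $z=(u,v)\in S$ consider the decorated points at $z$ lying in $S$ — all four $(u^\pm,v^\pm)$ when $z$ is interior, fewer on an edge, corner, or at infinity. Each such point is contained in some $\mu$-null rectangle, which I shrink to a small quadrant rectangle abutting $z$; taking a common size and using additivity over the resulting $2\times2$ (or $2\times1$, or $1\times1$) block shows that $z$ has a genuine neighbourhood $N_z$ in $S$ contained in a single $\mu$-null closed rectangle $Q_z$. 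The $N_z$ cover the compact set $S$; extract a finite subcover, take a Lebesgue number $\lambda$ for it, and subdivide $S$ into grid cells of diameter $<\lambda$ (in a metric compactifying $[-\infty,\infty]^2$). Each cell lies in some $Q_{z_j}$, hence is $\mu$-null, and additivity over the grid gives $\mu(S)=0$, contradicting $\mu(S)\ge1$. The one persistently delicate feature of the whole argument is the bookkeeping of decorations — repeatedly verifying that the chosen rectangles contain the intended decorated point with tick pointing inward while excluding its neighbours, and that splits partition decorated points correctly — but this is care, not difficulty.
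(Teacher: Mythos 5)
The paper offers no proof of this theorem: it is quoted directly from \cite{Thestructureandstability} and marked as established there, so there is no internal argument to compare yours against. Judged on its own terms, your proof is essentially a correct reconstruction of the proof in that reference: the same two easy verifications (additivity of $\mu_A$ via the tick convention, well-definedness of $\mathrm{m}_\mu$ as a minimum over a set of extended natural numbers), the same counting bound $\card(A_\mu|_R)\le\mu(R)$ obtained by subdividing $R$ into a grid separating the decorated points, and the same compactness mechanism (if every decorated point of $S$ has multiplicity zero, cover $S$ by null blocks, extract a finite subcover, and conclude $\mu(S)=0$) driving the reverse inequality. Your packaging of the reverse inequality as an induction on $\mu(R)$ that peels off one decorated point at a time, checking that $\nu = \mu - \mathbf{1}[x_0\in\cdot\,]$ is again a finite r-measure whose diagram is $A_\mu$ minus one copy of $x_0$, is a clean and correct way to organize what the reference does somewhat more directly.

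One small slip is worth fixing. In the packing estimate, taking cut lines only at the coordinates occurring among the $x_i$ does not force distinct decorated points into distinct cells: if $s<t$ are adjacent cut lines, the decorated points $(s^+,v^+)$ and $(t^-,w^-)$ both lie in the single cell $[s,t]\times[v,w]$, so your sum of multiplicities could double-count a cell. The remedy is to insert an additional cut line strictly between each pair of consecutive coordinates (horizontally and vertically); after that refinement a cell can contain at most one of the chosen decorated points and the estimate $\mu(R)\ge\sum_i n_i$ goes through. Everything downstream --- local finiteness of $A_\mu$, the round trip $A_{\mu_A}=A$ via shrinking rectangles that isolate a single decorated point, and the induction step --- is sound.
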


\begin{remark}
Since r-measures are monotone, the `$\min$' in the formula for $\mathrm{m}_A$ can be calculated as a limit. For example
\begin{align*}
\mathrm{m}_A(p^+,q^-)
&=
\lim_{\epsilon \to 0} \mu([p, p+\epsilon] \times [q-\epsilon, q]),
\end{align*}
with similar formulas for the other choices of decoration for $(p^*,q^*)$ and for points at infinity. Since the expression inside the `lim' takes values in the natural numbers and decreases as $\epsilon$ decreases, it necessarily stabilizes for sufficiently small~$\epsilon$.
\end{remark}

The multiset~$A$ corresponding to a finite r-measure~$\mu$ is its \emph{decorated diagram}, written $\Dgm(\mu)$. We obtain the \emph{undecorated diagram} $\dgm(\mu)$ by forgetting the decorations. This is a multiset in~$\Hp$.

When the r-measure is not finite, the \emph{finite support} is defined in~\cite{Thestructureandstability} to be the set of decorated points in~$\Hp$ that are contained in some rectangle of finite measure. Within the finite support there is a well-defined decorated persistence diagram which characterizes the r-measure as above, with the proviso that rectangles which extend beyond the finite support have infinite measure. In particular, the undecorated diagram can be thought of as a locally finite multiset defined in some open set $\mathcal{F} \subseteq \Hp$ and deemed to have infinite multiplicity everywhere else in the extended plane.

\section{Parametrized Homology}
\label{sec:PH}

In this section we define `parametrized homology' invariants for $\RR$-spaces. Given an $\RR$-space $\XX = (\XX,f)$ and a homology functor $\Hgr$ with field coefficients, we define four persistence diagrams
\[
\Dgm\!\!{}^\uds (\Hgr \XX),\;
\Dgm\!\!{}^\dds (\Hgr \XX),\;
\Dgm\!\!{}^\uus (\Hgr \XX),\;
\Dgm\!\!{}^\dus (\Hgr \XX)
\]
that detect topological features exhibiting four different behaviors. We will need to impose conditions on $\Hgr$ and $\XX$ to guarantee that the r-measures used to define these diagrams are additive and finite.

\subsection{Four measures}
\label{subsec:4-measures}

Let $\XX = (X, f)$ be a $\RR$-space and let $\Hgr$ be a homology functor with field coefficients. Given a rectangle
\[
R = [a,b]\times [c,d],
\qquad
-\infty \leq a < b < c < d \leq +\infty,
\]
we wish to count the homological features of $\XX$ that are supported over the closed interval $[b, c]$ but do not reach either end of the open interval $(a, d)$.
Accordingly,  consider the diagram
\[
\XX_{\{a, b, c, d\}}:  \raisebox{-4.25ex}{
\begin{tikzpicture}[xscale=1.05,yscale=1.05]
\draw (1,1) node(11) {$\XX_a^b$} (3,1) node(31) {$\XX_b^c$} (5,1) node(51) {$\XX_c^d$} ;
\draw (0,0) node(00){$\XX_a^a$} (2,0) node(20){$\XX_b^b$} (4,0) node(40){$\XX_c^c$} (6,0) node(60){$\XX_d^d$}  ;
\draw[->] (00) -- (11); 
\draw[->] (20) -- (11);
\draw[->] (20) -- (31);
\draw[->] (40) -- (31);
\draw[->] (40) -- (51);
\draw[->] (60) -- (51);  
\end{tikzpicture}}
\]
of spaces and inclusion maps, where $\XX_a^b = f^{-1}[a, b]$. We assume $\XX_{-\infty}^{-\infty}$ and $\XX_{+\infty}^{+\infty}$ to be empty if they occur.
Apply~$\Hgr$ to obtain a diagram
\[
\Hgr\XX_{\{a, b, c, d\}}: 
\raisebox{-4.25ex}{
\begin{tikzpicture}[xscale=1.1,yscale=1.1]
\draw
(1,1) node(11) {$\Hgr(\XX_a^b)$}
(3,1) node(31) {$\Hgr(\XX_b^c)$}
(5,1) node(51) {$\Hgr(\XX_c^d)$} ;
\draw
(0,0) node(00){$\Hgr(\XX_a^a)$}
(2,0) node(20){$\Hgr(\XX_b^b)$}
(4,0) node(40){$\Hgr(\XX_c^c)$}
(6,0) node(60){$\Hgr(\XX_d^d)$}  ;
\draw[->] (00) -- (11); 
\draw[->] (20) -- (11);
\draw[->] (20) -- (31);
\draw[->] (40) -- (31);
\draw[->] (40) -- (51);
\draw[->] (60) -- (51);  
\end{tikzpicture}}
\]
of vector spaces and linear maps. Decomposing this zigzag module into interval modules, four of the multiplicities are of interest to us. Define four quantities as follows:
\begin{align*}
\muud_{\Hgr\XX}(R)
&= \langle\zzud \, |\, \Hgr\XX_{\{a, b, c, d\}} \rangle
\\
\mudd_{\Hgr\XX}(R)
&= \langle \zzdd \, |\, \Hgr\XX_{\{a, b, c, d\}} \rangle
\\
\muuu_{\Hgr\XX}(R)
&= \langle \zzuu \, |\, \Hgr\XX_{\{a, b, c, d\}} \rangle
\\
\mudu_{\Hgr\XX}(R)
&= \langle \zzdu \, |\, \Hgr\XX_{\{a, b, c, d\}} \rangle.
\end{align*}
Each of these counts topological features of a certain type, supported over $[b,c]$ but not outside $(a,d)$.
Under favorable circumstances, these four functions of~$R$ turn out to be finite r-measures and therefore their behavior can be completely described by a decorated persistence diagram in the extended half-space. We will identify such circumstances in later parts of this chapter.
 
The distinction between the four behaviors is seen in Figure~\ref{fig:four}. Consider 0-dimensional singular homology $\Hgr = \Hgr_0(-;\kk)$. In each example $\Hgr \XX_b^b \cong \Hgr\XX_b^c \cong \Hgr\XX_c^c$ have rank two whereas $\Hgr\XX_a^a$, $\Hgr\XX_d^d$ each have rank one. The way in which the second feature (i.e.\ the second connected component) perishes at each end is determined by the ranks of the maps
\[
\raisebox{-1.4ex}{\begin{tikzpicture}[xscale=1.6,yscale=1]
\draw (0,0) node(00){$\Hgr\XX_a^b$} (1,0) node(10){$\Hgr\XX_b^b$};
\draw[->] (10) -- (00); 
\end{tikzpicture}}
\quad\text{and}\quad
\raisebox{-1.4ex}{\begin{tikzpicture}[xscale=1.6,yscale=1]
\draw (0,0) node(00){$\Hgr\XX_c^c$} (1,0) node(10){$\Hgr\XX_c^d$};
\draw[->] (00) -- (10); 
\end{tikzpicture}}.
\]
If the rank is two, then the feature has simply \emph{expired} at that end: it is no longer there at $\XX_a^a$ or~$\XX_d^d$. If the rank is one, that means the feature has been \emph{killed} by some 1-cell that has appeared in $\XX_a^b$ or $\XX_c^d$.
In terms of zigzag summands, the situation looks like this:
\begin{alignat*}{2}
\raisebox{0.5ex}{\text{\scriptsize{is killed}}}
&\; \zzud \;
\raisebox{0.5ex}{\text{\scriptsize{is killed}}}
\qquad
&
\raisebox{0.5ex}{\text{\scriptsize{is killed}}}
&\; \zzuu \;
\raisebox{0.5ex}{\text{\scriptsize{expires}}}
\\
\raisebox{0.5ex}{\text{\scriptsize{expires}}}
&\; \zzdd \;
\raisebox{0.5ex}{\text{\scriptsize{is killed}}}
\qquad
&
\raisebox{0.5ex}{\text{\scriptsize{expires}}}
&\; \zzdu \;
\raisebox{0.5ex}{\text{\scriptsize{expires}}}
\end{alignat*}
Our definitions associate the four symbols $\udt, \ddt, \uut, \dut$ with these four behaviors. An unspecified behavior may be indicated by the symbol $\xxt$.

\begin{figure}
\includegraphics[scale = 1.1]{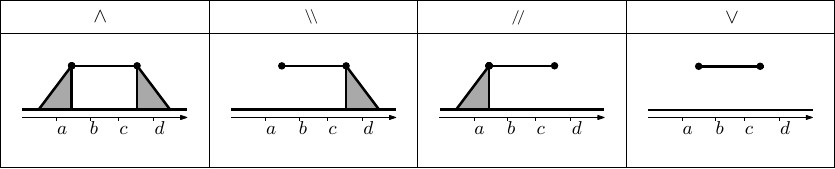}
\caption{
Two components (over $[b,c]$) become one (over~$a$ and~$d$). The four ways this can happen are detected by $\muud, \mudd, \muuu, \mudu$ respectively.}
\label{fig:four}
\end{figure}

\begin{proposition}
The four behaviors have `coordinate-reversal' symmetry. Specifically, suppose $\XX = (X,f)$ and $R = [a,b] \times [c,d]$. If we define the coordinate reversals $\overline\XX = (X,-f)$ and $\overline{R} = [-d,-c] \times [-b,-a]$ then the relations
\begin{alignat*}{2}
\muud_{\Hgr \overline{\XX}} (\overline{R}) &= \muud_{\Hgr\XX} (R)
\qquad&
\muuu_{\Hgr \overline{\XX}} (\overline{R}) &= \mudd_{\Hgr\XX} (R)
\\
\mudd_{\Hgr \overline{\XX}} (\overline{R}) &= \muuu_{\Hgr\XX} (R)
\qquad&
\mudu_{\Hgr \overline{\XX}} (\overline{R}) &= \mudu_{\Hgr\XX} (R)
\end{alignat*}
follow immediately.
\qed
\end{proposition}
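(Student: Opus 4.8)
The plan is to observe that replacing $f$ by $-f$ and $R$ by $\overline{R}$ does nothing more than reverse the order of the seven spaces in the diagram $\XX_{\{a,b,c,d\}}$, so that everything reduces to the trivial fact that reversing a zigzag module reverses each of its interval summands. No tameness or finiteness hypothesis is needed, since the four quantities in question are simply interval multiplicities of a finite zigzag module.

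First I would unwind the reversed data. For real numbers $s\le t$ we have $\overline{\XX}_s^t=(-f)^{-1}[s,t]=f^{-1}[-t,-s]=\XX_{-t}^{-s}$, which is also consistent with the convention that $\overline{\XX}_{\pm\infty}^{\pm\infty}$ is empty, as it equals $\XX_{\mp\infty}^{\mp\infty}$. Writing $\overline{R}=[\bar a,\bar b]\times[\bar c,\bar d]$ with $(\bar a,\bar b,\bar c,\bar d)=(-d,-c,-b,-a)$ — so that $\bar a<\bar b<\bar c<\bar d$ holds precisely because $a<b<c<d$ — a direct substitution shows that $\overline{\XX}_{\{\bar a,\bar b,\bar c,\bar d\}}$ is the diagram
\[
\XX_d^d \to \XX_c^d \leftarrow \XX_c^c \to \XX_b^c \leftarrow \XX_b^b \to \XX_a^b \leftarrow \XX_a^a,
\]
which is exactly $\XX_{\{a,b,c,d\}}$ read from right to left; the inclusion maps correspond as well, the reversal of the list interchanging forward and backward arrows.

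Next I would apply $\Hgr$ and use the behaviour of interval decompositions under reversal. The module $\Hgr\overline{\XX}_{\{\bar a,\bar b,\bar c,\bar d\}}$ is the reverse of $\Hgr\XX_{\{a,b,c,d\}}$ — its $i$-th term is the $(8-i)$-th term of the latter, for $i=1,\dots,7$ — so reversing an interval decomposition yields, for every $[p,q]$,
\[
\langle\, \II[p,q]\,\mid\, \Hgr\overline{\XX}_{\{\bar a,\bar b,\bar c,\bar d\}}\,\rangle
=\langle\, \II[8-q,8-p]\,\mid\, \Hgr\XX_{\{a,b,c,d\}}\,\rangle ,
\]
since reversing $\II[p,q]$ gives $\II[8-q,8-p]$. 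Finally I would record which interval types the four pictures denote over this seven-term index set: $\muud$ counts $\II[3,5]$, $\mudd$ counts $\II[2,5]$, $\muuu$ counts $\II[3,6]$, and $\mudu$ counts $\II[2,6]$, the two mixed types $\II[2,5]$ and $\II[3,6]$ being respectively the left-expires/right-killed and left-killed/right-expires cases. The involution $i\mapsto 8-i$ fixes $[3,5]$ and $[2,6]$ and swaps $[2,5]$ with $[3,6]$; substituting these into the displayed equality produces exactly the four asserted relations.

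There is no genuine obstacle here; the argument is pure bookkeeping, which is why the proposition is stated as following ``immediately''. The only points that need a little care are checking that the two diagrams of spaces really do coincide up to order-reversal — including the directions of the arrows and the behaviour at $\pm\infty$ — and correctly matching the four symbols $\udt,\ddt,\uut,\dut$ with the interval types $\II[3,5],\II[2,5],\II[3,6],\II[2,6]$ over the index set $\{1,\dots,7\}$.
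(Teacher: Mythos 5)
Your proof is correct and is precisely the bookkeeping that the paper suppresses under the word ``immediately'': the paper gives no argument beyond the \qed, and the three observations you isolate (the index change $\overline{\XX}_s^t = \XX_{-t}^{-s}$ reverses the seven-term zigzag, reversal sends $\II[p,q]$ to $\II[8-q,8-p]$, and the involution $i\mapsto 8-i$ fixes $[3,5]$ and $[2,6]$ while swapping $[2,5]$ with $[3,6]$) are exactly the content being invoked. Your matching of the four symbols to the intervals $[3,5],[2,5],[3,6],[2,6]$ is consistent with the paper's killed/expired convention.
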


Our next step is to identify when the four functions $\mu^\xxs_{\Hgr\XX}(R)$ are finite r-measures. We consider additivity first (Sections \ref{sec:tautness} and~\ref{sec:additivity}), then finiteness (Section~\ref{sec:finiteness}).

\subsection{Tautness}
\label{sec:tautness}

In proving additivity and other identities, we will make much use of the Diamond Principle. For $p < q < r < s$, consider the following diamonds:
\[
\raisebox{-8ex}{
\begin{tikzpicture}[xscale=1.2,yscale=1.2]
\draw
(1,1) node(11) {$\Hgr_k(\XX_p^s)$}
(0,0) node(00){$\Hgr_k(\XX_p^r)$}
(2,0) node(20){$\Hgr_k(\XX_q^s)$}
(1,-1) node(101){$\Hgr_k(\XX_q^r)$} ;
\draw[->] (00) -- (11); 
\draw[->] (20) -- (11);
\draw[<-] (20) -- (101);
\draw[<-] (00) -- (101);
\end{tikzpicture}
}
\qquad \text{and} \qquad
\raisebox{-8ex}{
\begin{tikzpicture}[xscale=1.2,yscale=1.2]
\draw
(1,1) node(11) {$\Hgr_k(\XX_p^r)$}
(0,0) node(00){$\Hgr_k(\XX_p^q)$}
(2,0) node(20){$\Hgr_k(\XX_q^r)$}
(1,-1) node(101){$\Hgr_k(\XX_q^q)$} ;
\draw[->] (00) -- (11); 
\draw[->] (20) -- (11);
\draw[<-] (20) -- (101);
\draw[<-] (00) -- (101);
\end{tikzpicture}
}
\]
The exactness of the left diamond is guaranteed by the Mayer--Vietoris theorem, which applies because the relative interiors of $\XX_p^r, \XX_q^s$ contain the sets $f^{-1}[p,r), f^{-1}(q,s]$ which cover $\XX_p^s$.
In contrast, there is no such guarantee for the right diamond: the relative interiors of $\XX_p^q, \XX_q^r$ do not cover $\XX_p^r$.

We identify a local condition on the embedding of~$\XX_q^q$ in~$\XX$, in terms of the homology theory~$\Hgr$, which gives us exactness of all such diamonds.
Let $U$ be any neighborhood of $\XX_q^q$ (such as $\XX_p^r$). It splits into two parts: a lower-neighborhood
\[
A = U \cap \XX^q = U \cap f^{-1}(-\infty,q],
\]
and an upper-neighborhood
\[
B = U \cap \XX_q = U \cap f^{-1}[q,+\infty).
\]
Then $U = A \cup B$ and $\XX_q^q = A \cap B$, and we desire the exactness of
\[
\tag{$\diamond_{AB}$}
\raisebox{-3.5pc}{
\begin{tikzpicture}[xscale=1.2,yscale=1.2]
\draw (1,1) node(11) {$\Hgr_k(U)$}  ;
\draw (0,0) node(00){$\Hgr_k(A)$} (2,0) node(20){$\Hgr_k(B)$} (1,-1) node(101){$\Hgr_k(\XX_q^q)$} ;
\draw[->] (00) -- (11); 
\draw[->] (20) -- (11);
\draw[<-] (20) -- (101);
\draw[<-] (00) -- (101);
\end{tikzpicture}
}
\]
in whichever dimension~$k$ we are considering. Here are two criteria.

\begin{criterionA}
The levelset $\XX_q^q$ is \emph{ $\Hgr_k$-taut in~$U$} if the map (induced by inclusion)
\[
\alpha_{k+1}\colon
\Hgr_{k+1}(A,\XX_q^q) \to \Hgr_{k+1}(U,B)
\]
is an epimorphism, and the map (induced by inclusion)
\[
\alpha_k\colon \Hgr_k(A,\XX_q^q) \to \Hgr_k(U,B)
\]
is a monomorphism.
\end{criterionA}

\begin{criterionB}
The levelset $\XX_q^q$ is \emph{$\Hgr_k$-taut in~$U$} if the map (induced by inclusion)
\[
\beta_{k+1}\colon
\Hgr_{k+1}(B, \XX_q^q) \to \Hgr_{k+1}(U, A)
\]
is an epimorphism, and the map (induced by inclusion)
\[
\beta_k\colon \Hgr_k(B, \XX_q^q) \to \Hgr_k(U,A)
\]
is a monomorphism.
\end{criterionB}

The maps $\alpha_*, \beta_*$ are excision maps, and they would automatically be isomorphisms if the excision axiom applied to them. For the axiom to apply we would need
\begin{align*}
\operatorname{closure}(B - \XX_q^q) &\subseteq \operatorname{interior}(B)
\\
\operatorname{closure}(A - \XX_q^q) &\subseteq \operatorname{interior}(A)
\end{align*}
for $\alpha_*, \beta_*$ respectively, and this is not true in general.

\begin{proposition}
The two criteria are equivalent.
\end{proposition}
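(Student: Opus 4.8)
The plan is to reduce both criteria to a single condition that is manifestly symmetric in $A$ and $B$. Abbreviate $W = \XX_q^q = A\cap B$, so there are inclusions of pairs $W\subseteq A\subseteq U$ and $W\subseteq B\subseteq U$. I would work with the long exact sequences of the triples $(U,A,W)$ and $(U,B,W)$, and denote the inclusion-induced maps occurring in them by
\[
i_\ast\colon \Hgr_\ast(A,W)\to\Hgr_\ast(U,W),\qquad j_\ast\colon \Hgr_\ast(U,W)\to\Hgr_\ast(U,A),
\]
\[
i'_\ast\colon \Hgr_\ast(B,W)\to\Hgr_\ast(U,W),\qquad j'_\ast\colon \Hgr_\ast(U,W)\to\Hgr_\ast(U,B).
\]
Because the inclusion of pairs $(A,W)\hookrightarrow(U,B)$ that induces $\alpha_\ast$ is the composite $(A,W)\hookrightarrow(U,W)\hookrightarrow(U,B)$, functoriality of $\Hgr$ gives the factorization $\alpha_\ast=j'_\ast\circ i_\ast$; symmetrically $\beta_\ast=j_\ast\circ i'_\ast$.

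I would then unwind Criterion~A in terms of these maps. Exactness of the segment $\Hgr_{k+1}(U,W)\xrightarrow{j_{k+1}}\Hgr_{k+1}(U,A)\xrightarrow{\partial}\Hgr_k(A,W)\xrightarrow{i_k}\Hgr_k(U,W)$ of the triple sequence of $(U,A,W)$ shows that $i_k$ is a monomorphism if and only if $j_{k+1}$ is an epimorphism; and since $\ker j'_k=\Img i'_k$ by exactness of the triple sequence of $(U,B,W)$, the factorization $\alpha_k=j'_k i_k$ shows that $\alpha_k$ is a monomorphism if and only if, moreover, $\Img(i_k)\cap\Img(i'_k)=0$ inside $\Hgr_k(U,W)$. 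Similarly $\Img i_{k+1}=\ker j_{k+1}$, so $\alpha_{k+1}$ is an epimorphism if and only if $j'_{k+1}(\ker j_{k+1})=\Hgr_{k+1}(U,B)$. An elementary lemma --- for linear maps $j\colon P\to Q$ and $j'\colon P\to Q'$, the pair $(j,j')$ is surjective iff $j$ is surjective and $j'(\ker j)=Q'$ (given $(a,b)$, pick $x_0$ with $j(x_0)=a$ and correct it by an element of $\ker j$ to fix the $j'$-value) --- then shows that ``$j_{k+1}$ epi and $j'_{k+1}(\ker j_{k+1})=\Hgr_{k+1}(U,B)$'' is equivalent to surjectivity of the single map
\[
(j_{k+1},j'_{k+1})\colon \Hgr_{k+1}(U,W)\longrightarrow \Hgr_{k+1}(U,A)\oplus\Hgr_{k+1}(U,B).
\]
Hence Criterion~A holds in dimension $k$ if and only if (I) the map $(j_{k+1},j'_{k+1})$ is surjective and (II) $\Img(i_k)\cap\Img(i'_k)=0$.

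To finish, note that conditions (I) and (II) are symmetric under exchanging $A$ and $B$, which swaps $(i_\ast,j_\ast)\leftrightarrow(i'_\ast,j'_\ast)$ and $\alpha_\ast\leftrightarrow\beta_\ast$. Running the same analysis with $A$ and $B$ interchanged therefore shows that Criterion~B holds in dimension $k$ if and only if (I) and (II) hold. So both criteria are equivalent to (I) together with (II), and in particular to one another.

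I expect the only real difficulty to be organizational rather than conceptual: assembling the two triple sequences, verifying the factorizations $\alpha_\ast=j'_\ast i_\ast$ and $\beta_\ast=j_\ast i'_\ast$, and carefully translating ``$\alpha_k$ mono'' and ``$\alpha_{k+1}$ epi'' into the stated conditions on $i_k,i'_k,j_{k+1},j'_{k+1}$ --- in particular, keeping track of which connecting homomorphism is forced to vanish at each step. In essence this is the octahedral relationship between the two excision maps, carried out by hand for a general homology functor; each individual step is routine, but the bookkeeping must be done with care. One may also observe that the argument uses only $A\cap B=W$ and never the relation $U=A\cup B$.
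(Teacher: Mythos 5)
Your proof is correct, and it is a cleaner, more structured variant of the paper's argument. Both proofs are built on the same skeleton: the long exact sequences of the triples $(U,A,W)$ and $(U,B,W)$ with $W = \XX_q^q$, and the factorizations $\alpha_\ast = j'_\ast \circ i_\ast$, $\beta_\ast = j_\ast \circ i'_\ast$ through $\Hgr_\ast(U,W)$. The paper criss-crosses these two sequences into a braid diagram and argues ``A $\Rightarrow$ B'' by a diagram chase (noting that $\alpha_{k+1}$ epi kills the upper connecting map and $\alpha_k$ mono kills the lower one), then invokes the $A\leftrightarrow B$ symmetry for the converse. You instead extract from the same exact sequences a symmetric intermediate characterization: both criteria are equivalent to (I) surjectivity of $(j_{k+1}, j'_{k+1})\colon \Hgr_{k+1}(U,W) \to \Hgr_{k+1}(U,A)\oplus \Hgr_{k+1}(U,B)$, together with (II) $\Img(i_k)\cap\Img(i'_k)=0$. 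Your route avoids the chase entirely and makes the underlying symmetry the theorem rather than a meta-observation; as a bonus, your conditions (I) and (II) are exactly the surjectivity and injectivity parts of exactness of the diamond $(\diamond_{AB})$ after excision to $W$, so the proof also illuminates why tautness is the right hypothesis. Your closing remark that only $A\cap B = W$, and not $U = A\cup B$, is used is correct and worth keeping.
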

\begin{proof}
We show that the statements for $\alpha_{k+1}, \alpha_k$ together imply the statements for $\beta_{k+1}, \beta_k$ (the converse being symmetric).

The following commutative diagram is obtained by criss-crossing the long exact sequences for the triples $(U, A ,\XX_q^q)$ and $(U, B, \XX_q^q)$:
\[
\begin{tikzpicture}[xscale=1.4,yscale=1.3]
\draw
(-0.1,0) node(00){$\Hgr_{k+1}(B, \XX_q^q)$}
(2,0) node(20){$\Hgr_{k+1}(U, A)$}
(4,0) node(40){$\Hgr_{k}(A, \XX_q^q)$}
(5.9,0) node(60){$\Hgr_{k}(U, B)$}
(1,1) node(11){$\Hgr_{k+1}(U, \XX_q^q)$};
 
 \draw
 (-0.1,2) node(02){$\Hgr_{k+1}(A, \XX_q^q)$}
 (2,2) node(22){$\Hgr_{k+1}(U, B)$}
 (4,2) node(42){$\Hgr_{k}(B, \XX_q^q)$}
 (5.9,2) node(62){$\Hgr_{k}(U, A)$}
 (5,1) node(51){$\Hgr_{k}(U, \XX_q^q)$};
 
\draw[->] (00) to node[below]{$\scriptstyle \beta_{k+1}$} (20);
\draw[->] (00) -- (11);
\draw[<-] (20) -- (11); 
\draw[->] (20) to node[below]{$\scriptstyle \partial$} (40); 
\draw[->] (40) to node[below]{$\scriptstyle \alpha_k$} (60); 

\draw[->] (02) to node[above]{$\scriptstyle \alpha_{k+1}$} (22);
\draw[->] (22) to node[above]{$\scriptstyle \partial$} (42); 
\draw[->] (42) to node[above]{$\scriptstyle \beta_k$} (62); 

\draw[->] (11) -- (22);
\draw[->] (02) -- (11); 
\draw[->] (51) -- (60);
\draw[->] (40) -- (51); 
\draw[->] (51) -- (62);
\draw[->] (42) -- (51); 
\end{tikzpicture}
\]
%
%
Note that $\alpha_{k+1}$ being an epimorphism implies that the upper~$\partial$ is zero, and $\alpha_k$ being a monomorphism implies that the lower~$\partial$ is zero. With that in mind, it becomes a routine diagram-chase to show that $\beta_{k+1}$ is an epimorphism and $\beta_k$ is a monomorphism.
\end{proof}

We use the term \emph{normal neighborhood} to refer to a neighborhood which contains a closed neighborhood. In a normal topological space (such as a compact Hausdorff space), all neighborhoods of a closed set are normal. Closed neighborhoods are trivially normal.

\begin{proposition}
If the levelset $\XX_q^q$ is $\Hgr_k$-taut in some normal neighborhood, then it is $\Hgr_k$-taut in any normal neighborhood.
\end{proposition}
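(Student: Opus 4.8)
The plan is to reduce the general assertion to a comparison between two normal neighborhoods, one contained in the other, and then exploit the excision-compatibility of the maps appearing in Criteria A and B. Concretely, suppose $\XX_q^q$ is $\Hgr_k$-taut in a normal neighborhood $U$, and let $V$ be any other normal neighborhood. By definition of normality, $U$ contains a closed neighborhood $\overline{W}$ of $\XX_q^q$, and likewise $V$ contains a closed neighborhood; intersecting, we may pick a closed neighborhood $N$ of $\XX_q^q$ with $N \subseteq U \cap V$. It then suffices to show that (a) $\Hgr_k$-tautness in $U$ passes to $\Hgr_k$-tautness in $N$, and (b) $\Hgr_k$-tautness in $N$ passes to $\Hgr_k$-tautness in $V$. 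Both steps are instances of the same lemma: \emph{if $N \subseteq U$ are normal neighborhoods of $\XX_q^q$ and $\XX_q^q$ is $\Hgr_k$-taut in one of them, it is taut in the other}. So the whole proof collapses to proving that single lemma for a nested pair.

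For the nested pair $N \subseteq U$, write $A_U, B_U$ and $A_N, B_N$ for the lower- and upper-neighborhoods as in the text, so $A_N = N \cap \XX^q \subseteq A_U$, $B_N = N \cap \XX_q \subseteq B_U$, and $A_N \cap B_N = A_U \cap B_U = \XX_q^q$. The key geometric observation is that the inclusion $(A_N, \XX_q^q) \hookrightarrow (A_U, \XX_q^q)$ is an \emph{excision}: we are enlarging $A_N$ by the set $A_U \setminus A_N$, whose closure lies inside $\operatorname{interior}(A_U)$ provided $N$ is a closed neighborhood sitting well inside $U$ — more precisely we should choose the intermediate closed neighborhood so that $N \subseteq \operatorname{interior}_U(\text{something})$; in the normal setting one can always insert such a buffer. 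Hence the excision map $\Hgr_*(A_N, \XX_q^q) \to \Hgr_*(A_U, \XX_q^q)$ is an isomorphism, and similarly $\Hgr_*(N, B_N) \to \Hgr_*(U, B_U)$ is an isomorphism (same excision, now enlarging $B$-direction). Under these two isomorphisms the map $\alpha_k^N \colon \Hgr_k(A_N,\XX_q^q)\to \Hgr_k(N,B_N)$ is identified with $\alpha_k^U$, and likewise $\alpha_{k+1}^N$ with $\alpha_{k+1}^U$; so one is epi/mono iff the other is, which is exactly Criterion A transferred between $N$ and $U$.

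The main obstacle — and the place requiring care rather than routine bookkeeping — is verifying the excision hypothesis $\operatorname{closure}(A_U \setminus A_N) \subseteq \operatorname{interior}(A_U)$ and its $B$-analogue. This is not automatic from $N \subseteq U$ alone: it is precisely the kind of failure flagged just above the proposition (the excision axiom "is not true in general"). The fix is to use normality twice rather than once: first extract a closed neighborhood $N_1$ with $\XX_q^q \subseteq \operatorname{interior}(N_1) \subseteq N_1 \subseteq \operatorname{interior}(U)$, then another $N_0$ with $\XX_q^q \subseteq \operatorname{interior}(N_0) \subseteq N_0 \subseteq \operatorname{interior}(N_1)$, and run the excision argument on the pair $N_0 \subseteq N_1$, where the buffer $\operatorname{interior}(N_1) \setminus N_0$ furnishes the required separation. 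Then a short chain of such comparisons links $U$ to the given $V$ through a common inner closed neighborhood, and tautness propagates along the chain. One should also note that the previous proposition (equivalence of Criteria A and B) lets us use whichever of the two relative-homology excisions is more convenient at each stage, and that the whole argument is natural in $k$ so nothing special happens at the boundary dimensions $k$ and $k+1$.
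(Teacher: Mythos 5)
Your reduction to a nested pair of neighborhoods is sound, and your instinct that the argument should rest on excision is also right, but the excision you invoke is not actually an excision. You claim that the inclusion $(A_N, \XX_q^q) \hookrightarrow (A_U, \XX_q^q)$ is an excision because ``the closure of $A_U \setminus A_N$ lies inside $\operatorname{interior}(A_U)$.'' That is not the excision hypothesis. For the inclusion $(X \setminus Z, C \setminus Z) \hookrightarrow (X, C)$ to be covered by the excision axiom one needs $Z \subseteq C$ (and then $\overline{Z} \subseteq \operatorname{int}(C)$). Here you are setting $X = A_U$, $C = \XX_q^q$, $Z = A_U \setminus A_N$, and $Z$ is \emph{disjoint} from $\XX_q^q$, not contained in it; moreover the levelset typically has empty interior in $A_U$. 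So this is not an excision of any kind, and the remedy you suggest — inserting extra buffer neighborhoods — does not address the problem, since no amount of separation makes $A_U \setminus A_N$ a subset of $\XX_q^q$. This is exactly the kind of failure the paragraph preceding the proposition is warning about.

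The idea you are missing is the paper's hybrid intermediate neighborhood. After reducing to $U \subseteq W$ with $U$ closed, write $U = A \cup B$, $W = A' \cup B'$, and set $V = A \cup B'$. Passing from $U$ to $V$ changes only the upper part ($B \rightsquigarrow B'$), so the \emph{domain} of $\alpha_*$, namely $\Hgr_*(A, \XX_q^q)$, is literally the same in both — no excision needed there — and only the codomain $\Hgr_*(U, B) \to \Hgr_*(V, B')$ is compared, where the excised set sits inside the relative subspace $B'$ so that excision genuinely applies (because $A \cup B$ is a closed neighborhood of $A$ in $A \cup B'$). Passing from $V$ to $W$ changes only the lower part ($A \rightsquigarrow A'$), so one switches to Criterion~B, where the domain $\Hgr_*(B', \XX_q^q)$ is unchanged and only the codomain needs excision. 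The equivalence of Criteria A and B (the preceding proposition) is what licenses the switch. Your version tried to excise on both the domain and the codomain simultaneously by shrinking $A$ and $B$ at once, and that is exactly what does not work.
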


\begin{proof}
Since any two normal neighborhoods contain a closed neighborhood in common, it is enough to show that
\[
\text{$\XX_q^q$ is $\Hgr_k$-taut in~$U$}
\quad
\Leftrightarrow
\quad
\text{$\XX_q^q$ is $\Hgr_k$-taut in~$W$}
\]
whenever $U \subseteq W$ are neighborhoods and $U$ is closed.
Writing $U = A \cup B$ and $W = A' \cup B'$ as usual, we also consider $V = A \cup B'$.

Criterion~A gives the same result for $U$ as for~$V$, by considering
\[
\begin{tikzpicture}[xscale=1.45,yscale=1.3]
\draw (-0.1,0) node(00){$\Hgr_{*}(A, \XX_q^q)$} (1.8,0) node(20){$\Hgr_{*}(A\cup B, A)$}  (4,0) node(40){$\Hgr_{k}(A \cup B', B')$};
 
\draw[->] (00) -- (20);
\draw[->] (20) to node[above]{$\scriptstyle \simeq$} (40); 
\end{tikzpicture}
\]
The right-hand map is an isomorphism by the excision axiom, which applies in this situation because $A \cup B$ is a closed neighborhood of~$A$ in $A \cup B'$.

Criterion~B gives the same result for $V$ as for~$W$, by considering
\[
\begin{tikzpicture}[xscale=1.45,yscale=1.3]
\draw (-0.1,0) node(00){$\Hgr_{*}(B', \XX_q^q)$} (1.8,0) node(20){$\Hgr_{*}(A\cup B', A)$}  (4,0) node(40){$\Hgr_{k}(A' \cup B', A')$};
 
\draw[->] (00) -- (20);
\draw[->] (20) to node[above]{$\scriptstyle \simeq$} (40); 
\end{tikzpicture}
\]
The right-hand map is an isomorphism by excision, since $A \cup B'$ is a closed neighborhood of~$B'$ in $A' \cup B'$.

The result follows.
\end{proof}

\begin{definition}
Accordingly, we say that the levelset $\XX_q^q$ is \emph{$\Hgr_k$-taut} if it is $\Hgr_k$-taut in some, and therefore every, normal neighborhood.
\end{definition}
\begin{definition}\label{taut}
We say that the levelset $\XX_q^q$ is \emph{$\Hgr$-taut} if it is $\Hgr_k$-taut in all dimensions~$k$. This means that for every normal neighborhood~$U$, the maps
\[
\alpha_k: \Hgr_k(A, \XX_q^q) \to \Hgr_k(U,B)
\]
are isomorphisms for all~$k$, or equivalently
\[
\beta_k: \Hgr_k(B, \XX_q^q) \to \Hgr_k(U,A)
\]
are isomorphisms for all~$k$.
\end{definition}

\begin{proposition}
If the levelset $\XX_q^q$ is $\Hgr_k$-taut, then the diagram $(\diamond_{AB})$ is exact for any normal neighborhood $U = A \cup B$.
\end{proposition}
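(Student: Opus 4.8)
The plan is to reduce the exactness of $(\diamond_{AB})$ to the Mayer--Vietoris sequence for the pair $(A,B)$ relative to $\XX_q^q$, where the obstruction to ordinary Mayer--Vietoris is precisely measured by the excision maps $\alpha_*$ (equivalently $\beta_*$). Concretely, for a normal neighborhood $U = A \cup B$ with $\XX_q^q = A \cap B$, one always has the relative Mayer--Vietoris sequence
\[
\cdots \to \Hgr_{k+1}(U, \XX_q^q) \to \Hgr_k(A,\XX_q^q) \oplus \Hgr_k(B,\XX_q^q) \to \cdots
\]
but what we want is the \emph{absolute} Mayer--Vietoris sequence
\[
\cdots \to \Hgr_{k+1}(U) \to \Hgr_k(\XX_q^q) \xrightarrow{i_1 \oplus i_2} \Hgr_k(A) \oplus \Hgr_k(B) \xrightarrow{j_1 - j_2} \Hgr_k(U) \to \cdots,
\]
whose exactness at $\Hgr_k(A) \oplus \Hgr_k(B)$ is exactly the exactness of the diamond (this is the definition of exactness given just before Theorem~\ref{DP}). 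So the real content is: $\Hgr_k$-tautness of $\XX_q^q$ forces this absolute sequence to exist and be exact.

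First I would invoke the previous proposition (tautness in one normal neighborhood gives tautness in every normal neighborhood) together with Definition~\ref{taut}, so that we may assume the excision maps $\alpha_k \colon \Hgr_k(A, \XX_q^q) \to \Hgr_k(U, B)$ are isomorphisms in every dimension~$k$. Next I would write down the long exact sequences of the triples $(U, A, \XX_q^q)$ and $(U, B, \XX_q^q)$ and compare them: the map $\Hgr_k(U, \XX_q^q) \to \Hgr_k(U, B)$ together with $\alpha_k^{-1}$ produces a natural splitting/comparison that lets us identify $\Hgr_k(U, \XX_q^q)$ with $\Hgr_k(U, A) \oplus \Hgr_k(U, B)$ compatibly with the boundary and inclusion maps — this is the standard derivation of relative Mayer--Vietoris from an excision isomorphism. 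Then I would run the long exact sequence of the pair $(U, \XX_q^q)$ through this identification. Splicing the sequence of the pair with the two triple sequences and using that $\alpha_k$ is iso in all degrees collapses everything to the absolute Mayer--Vietoris sequence; reading off exactness at $\Hgr_k(A) \oplus \Hgr_k(B)$ gives exactly the statement that the pair $(\beta, \gamma)$ with $j_1(\beta) = j_2(\gamma)$ lifts to $A \cap B = \XX_q^q$, i.e.\ the exactness of $(\diamond_{AB})$.

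Alternatively — and this is probably the cleanest route to write — one can bypass the machinery and do a direct diagram chase, mimicking the proof that Criterion~A implies Criterion~B. Arrange the four triple long exact sequences (for $(U,A,\XX_q^q)$, $(U,B,\XX_q^q)$, and the pairs) into a commutative grid, note that $\alpha_{k+1}$ epi and $\alpha_k$ mono kill the relevant connecting homomorphisms, and chase: given $\beta \in \Hgr_k(A)$, $\gamma \in \Hgr_k(B)$ with equal images in $\Hgr_k(U)$, their difference in $\Hgr_k(U, \XX_q^q)$-type terms vanishes, so using the vanishing boundary maps one produces a preimage in $\Hgr_k(\XX_q^q)$. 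The main obstacle is purely bookkeeping: keeping the many long exact sequences and the two excision isomorphisms consistently oriented so that the chase actually closes up — there is no new idea beyond tautness, but the diagram is large enough that one must be careful about which boundary maps the hypotheses annihilate and in which degree.
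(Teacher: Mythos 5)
Your second route is essentially the paper's approach: a direct diagram chase using the mono/epi consequences of tautness. The paper's chase is even leaner than what you sketch, though. It does not set up a grid of four triple long exact sequences; it uses a single commutative ladder, namely the map of pair long exact sequences induced by the inclusion $(B,\XX_q^q) \hookrightarrow (U,A)$:
\[
\begin{tikzcd}[column sep=small]
\Hgr_{k+1}(B,\XX_q^q) \arrow[r]\arrow[d,"\beta_{k+1}"',"\text{epi}"] & \Hgr_k(\XX_q^q)\arrow[r]\arrow[d] & \Hgr_k(B)\arrow[r]\arrow[d] & \Hgr_k(B,\XX_q^q)\arrow[d,"\beta_k"',"\text{mono}"]\\
\Hgr_{k+1}(U,A)\arrow[r] & \Hgr_k(A)\arrow[r] & \Hgr_k(U)\arrow[r] & \Hgr_k(U,A)
\end{tikzcd}
\]
Here Criterion~B (equivalent to Criterion~A by the preceding proposition) gives $\beta_{k+1}$ epi and $\beta_k$ mono, and the chase goes: if $j_1(\beta)=j_2(\gamma)$, then $\gamma$ dies in $\Hgr_k(U,A)$, hence (mono) in $\Hgr_k(B,\XX_q^q)$, so $\gamma$ lifts to some $\tilde\alpha\in\Hgr_k(\XX_q^q)$; the discrepancy $\beta - i_1(\tilde\alpha)$ lies in the image of $\partial\colon\Hgr_{k+1}(U,A)\to\Hgr_k(A)$, which (epi) is covered by $\partial\colon\Hgr_{k+1}(B,\XX_q^q)\to\Hgr_k(\XX_q^q)$, and this correction lands in $\ker(\Hgr_k(\XX_q^q)\to\Hgr_k(B))$, so the chase closes. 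If you want to write it up, use this ladder rather than the bigger criss-cross of triples; it is exactly enough.

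Your first route, however, has a genuine gap. You propose to ``assume the excision maps $\alpha_k\colon\Hgr_k(A,\XX_q^q)\to\Hgr_k(U,B)$ are isomorphisms in every dimension $k$'' and then derive the full Mayer--Vietoris sequence. That assumption is $\Hgr$-tautness (all degrees), not the stated hypothesis $\Hgr_k$-tautness, which only gives $\alpha_{k+1}$ epi and $\alpha_k$ mono. With only that, you cannot splice a full Mayer--Vietoris sequence; you can only get exactness of the diamond in degree~$k$, which is what the ladder chase delivers directly. So drop route~1 (or strengthen its hypothesis explicitly) and go with the chase, being careful to keep it to the two degrees you actually control.
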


\begin{proof}
Using Criterion~B, say, this is a straightforward chase on the diagram
\[
\begin{tikzpicture}[xscale=1.2,yscale=0.8]
\draw (-0.1,0) node(00){$\Hgr_{k+1}(B, \XX_q^q)$} (2,0) node(20){$\Hgr_{k}(\XX_q^q)$}  (4,0) node(40){$\Hgr_{k}(B)$}
 (5.9,0) node(60){$\Hgr_{k}(B, \XX_q^q)$};
 
 \draw (-0.1,2) node(02){$\Hgr_{k+1}(U, A)$} (2,2) node(22){$\Hgr_{k}(A)$}  (4,2) node(42){$\Hgr_{k}(U)$}
 (5.9,2) node(62){$\Hgr_{k}(U, A)$};
 
\draw[->] (00) -- (20); 
\draw[->] (20) -- (40); 
\draw[->] (40) -- (60); 

\draw[->] (02) --(22);
\draw[->] (22) -- (42); 
\draw[->] (42) -- (62); 

\draw[->] (00) to node[left]{\footnotesize{epi}} (02);
\draw[->] (20) -- (22); 
\draw[->] (40) -- (42);
\draw[->] (60)  to node[right]{\footnotesize{mono}} (62);  
\end{tikzpicture}
\]
for the map of long exact sequences induced by the inclusion $(B, \XX_q^q) \to (U, A)$. 
\end{proof}

This completes our treatment of tautness. Here are some examples.

\begin{proposition}
\label{prop:taut-ex}
The $\RR$-space $\XX = (X, f)$ has $\Hgr$-taut levelsets under any of the following circumstances:

\smallskip
(i)
$X$ is locally compact, $f$ is proper, and $\Hgr$ is Steenrod--Sitnikov homology~\cite{Ferry, Steenrod}.

\smallskip
(ii)
Each $\XX_q^q$ is a deformation retract of some closed neighborhood in $\XX_q$ or $\XX^q$.

\smallskip
(iii)
$X$ is a smooth manifold and $f$ is a proper Morse function.

\smallskip
(iv)
$X$ is a locally compact polyhedron and $f$ is a proper piecewise-linear map.

\smallskip
(v)
$X \subseteq \RR^n \times \RR$ is a closed definable set in some o-minimal structure~\cite{ominimal} and $f$ is the projection onto the second factor. In particular, this applies when $X$ is semialgebraic~\cite{semialg}.

\smallskip

\end{proposition}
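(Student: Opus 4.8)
The plan is to verify each of the five cases (i)--(v) by exhibiting, for the levelset $\XX_q^q$, a normal neighborhood in which the tautness condition of Definition~\ref{taut} holds; by the previous propositions it then holds in every normal neighborhood. For (ii) this is essentially immediate: if $\XX_q^q$ is a deformation retract of a closed neighborhood $U$ inside $\XX_q$, then $B = U$ and $A = \XX_q^q$, so the pair $(A, \XX_q^q) = (\XX_q^q,\XX_q^q)$ has trivial relative homology and the pair $(U,B) = (U,U)$ does too; the map $\alpha_k$ is an isomorphism $0 \to 0$ trivially. (If instead $U$ lies in $\XX^q$, use Criterion~B symmetrically.) Strictly one should take a closed neighborhood of the deformation retract to make $U$ normal, but this is harmless.

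Cases (iii), (iv), (v) should all be reduced to (ii): in each of these situations $\XX_q^q$ is a reasonably nice compact (or locally compact) subspace of the total space, and standard structure theory provides a closed neighborhood that deformation retracts onto it. For a proper Morse function $f$ on a smooth manifold, a regular level $f^{-1}(q)$ has a tubular (collar) neighborhood $f^{-1}[q-\eps, q+\eps] \cong f^{-1}(q) \times [q-\eps,q+\eps]$ for small $\eps$ by the absence of critical values near $q$, and if $q$ is a critical value one uses the Morse Lemma to produce a handle decomposition, from which one still extracts a closed neighborhood of $f^{-1}(q)$ deformation retracting onto it; properness guarantees the levelset is compact so the neighborhood can be taken closed. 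The piecewise-linear case (iv) is the classical fact that in a locally compact polyhedron a subpolyhedron has a regular neighborhood that collapses (hence deformation retracts) onto it. The o-minimal/semialgebraic case (v) is the existence of definable (semialgebraic) tubular neighborhoods and the local conic structure theorem, which again yields a closed definable neighborhood deformation retracting onto the fiber. In each instance one must check that the retracting neighborhood can be arranged to sit inside $\XX_q$ or inside $\XX^q$ (one side of the level), which is automatic because we may intersect the collar with a sublevel or superlevel set.

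Case (i) is the genuinely different one and I expect it to be the main obstacle, since Steenrod--Sitnikov homology is not cellular/singular and does not come with the convenient deformation-retract machinery. Here the argument should instead invoke the defining properties of Steenrod--Sitnikov homology: it is a (strong-shape-theoretic) homology theory satisfying exactness, and crucially it satisfies the \emph{strong excision} axiom and a continuity (Milnor exact sequence / $\varprojlim^1$) property on the category of locally compact (second countable) metric spaces, with $f$ proper ensuring that all the relevant subspaces $\XX_p^r$, $\XX_q^q$ etc.\ stay in that category. The point is that the excision maps $\alpha_*$ (respectively $\beta_*$) become isomorphisms \emph{directly} from strong excision applied to the pair decomposition $U = A \cup B$, $A \cap B = \XX_q^q$ --- strong excision does not require the closure-in-interior hypothesis that fails for singular homology, precisely because Steenrod--Sitnikov homology is built to have this property on compacta/locally compact spaces. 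So for (i) one cites the strong excision property of Steenrod--Sitnikov homology (Ferry, Steenrod) and concludes tautness in any closed neighborhood with no further work. The delicate bookkeeping there is making sure the properness of $f$ is used correctly to keep every space in the excisive subcategory; that is the step I would write out most carefully.
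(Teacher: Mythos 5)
Your handling of (i) and the reductions of (iii)--(v) to (ii) match the paper's proof: case (i) is disposed of by the strong excision property of Steenrod--Sitnikov homology, and (iii)--(v) are reduced to (ii) via collar/regular/definable neighborhoods (the paper cites the o-minimal triviality result for (v)). The problem is in (ii) itself, which is the case everything else rests on, and there your argument has a genuine gap. You take $U$ to be the closed neighborhood of $\XX_q^q$ \emph{inside} $\XX_q$ that deformation retracts onto it, and then compute $A = \XX_q^q$, $B = U$. But such a $U$ is contained in $f^{-1}[q,+\infty)$ and therefore is not a neighborhood of $\XX_q^q$ in $X$ at all (it contains no open subset of $X$ around the levelset unless $f \geq q$ near it). The definition of tautness quantifies over normal neighborhoods of $\XX_q^q$ \emph{in $X$}, and the equivalence ``taut in some normal neighborhood $\Leftrightarrow$ taut in every normal neighborhood'' only transports the property between genuine two-sided neighborhoods. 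The telltale sign that something is wrong is that your verification never uses the deformation retraction: $\Hgr_k(\XX_q^q,\XX_q^q) \to \Hgr_k(U,U)$ is $0 \to 0$ for \emph{every} $\RR$-space, so the same argument would ``prove'' tautness unconditionally, which is false. Your parenthetical about taking a closed neighborhood ``to make $U$ normal'' misdiagnoses the issue: normality is not the problem (closed neighborhoods are automatically normal); two-sidedness is.

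The paper's fix is short but essential. Start from a closed neighborhood $C_1$ of $\XX_q^q$ in $X$ and the one-sided closed neighborhood $C_2 \subseteq \XX^q$ (say) that deformation retracts onto $\XX_q^q$, arranged so that $C_2 \subseteq C_1$. Set $C = C_2 \cup (C_1 \cap \XX_q)$. This $C$ \emph{is} a closed (hence normal) neighborhood of $\XX_q^q$ in $X$, with lower part $A = C_2$ and upper part $B = C_1 \cap \XX_q$. Now $\Hgr_k(A,\XX_q^q) = \Hgr_k(C_2,\XX_q^q) = 0$ by the retraction, and $\Hgr_k(C, C \cap \XX_q) = 0$ because the retraction of $C_2$ onto $\XX_q^q$ extends by the identity on $C_1 \cap \XX_q$ (the two pieces are closed and agree on their intersection, which lies in $\XX_q^q$) to a deformation retraction of $C$ onto $C \cap \XX_q$. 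Hence $\alpha_k$ is an isomorphism $0 \to 0$ in a legitimate normal neighborhood, and this time the deformation-retract hypothesis is actually doing the work. With that correction your reductions in (iii)--(v) go through as stated.
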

\begin{proof}

\textit{(i)} Steenrod--Sitnikov homology satisfies a strengthened form of the excision axiom~\cite{Steenrod} that does not require any restriction on the subspaces under consideration. Therefore maps in Definition~\ref{taut} are isomorphisms for any levelset $\XX_q^q$.

\smallskip
\textit{(ii)} Let $C_1$ be a closed neighborhood of $\XX_q^q$. We know $\XX_q^q$ is a deformation retract of a closed neighborhood $C_2$ in $\XX^q$. We may assume without loss of generality that $C_2 \subseteq C_1$. Let $C = C_2 \cup (C_1 \cap \XX_q)$. 
The homology groups $\Hgr_k(C_2, \XX_q^q)$ and $\Hgr_k(C,C\cap \XX_q)$ are trivial for every $k$ and therefore isomorphic, implying that $\XX_q^q$ is $\Hgr$-taut.

\smallskip
\textit{(iii)}, \textit{(iv)} and \textit{(v)} follow from \textit{(ii)}.  In particular, we prove \textit{(v)} by applying \cite[Corollary 3.9, Chapter 8]{ominimal}.
\end{proof}

\begin{remark}
We occasionally need to consider Mayer--Vietoris diamonds in relative homology. We establish their exactness individually as they occur.
\end{remark}

\subsection{Additivity}
\label{sec:additivity}

We are now ready to prove that the four measures $\mu^\xxs_{\Hgr\XX}$ are additive.

\begin{theorem}
\label{thm:additivity}
Let $\Hgr$ be a homology functor with field coefficients and let $\XX = (X,f)$ be an $\RR$-space whose levelsets are $\Hgr$-taut. Then $\muud_{\Hgr\XX}$, $\mudd_{\Hgr\XX}$, $\muuu_{\Hgr\XX}$, and $\mudu_{\Hgr\XX}$ are additive.
\end{theorem}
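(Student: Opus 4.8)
The plan is to prove additivity of each of the four measures $\mu^\xxs_{\Hgr\XX}$ under horizontal and vertical splits of a rectangle $R = [a,b]\times[c,d]$. By the coordinate-reversal symmetry (Proposition following the definition of the four measures) and the reflection $\overline{\XX}=(X,-f)$ that interchanges $\uus \leftrightarrow \dds$ and fixes $\uds,\dus$ (and interchanges horizontal with vertical splits), it suffices to treat one type of split — say the horizontal split $\mu^\xxs_{\Hgr\XX}([a,b]\times[c,d]) = \mu^\xxs_{\Hgr\XX}([a,p]\times[c,d]) + \mu^\xxs_{\Hgr\XX}([p,b]\times[c,d])$ for $a<p<b<c<d$ — and the vertical split then follows by symmetry. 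So the core task is a single statement about splitting the interval $[a,b]$ into $[a,p]\cup[p,b]$.

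First I would write down, for the five-point index set $\{a,p,b,c,d\}$, the Mayer--Vietoris zigzag diagram built from $\Hgr\XX^{\bullet}_{\bullet}$ on overlapping closed intervals, and apply the Diamond Principle (Theorem~\ref{DP}) at the node corresponding to $\XX_p^p$: the relevant diamond has apex $\Hgr_k(\XX_a^b)$, base $\Hgr_k(\XX_p^p)$, and legs $\Hgr_k(\XX_a^p)$, $\Hgr_k(\XX_p^b)$. Exactness of this diamond is exactly the assertion $(\diamond_{AB})$ for the levelset $\XX_p^p$ inside the normal neighborhood $\XX_a^b$, which holds by the $\Hgr$-taut hypothesis (via the Proposition stating that $\Hgr_k$-tautness gives exactness of $(\diamond_{AB})$). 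Then I would use the Restriction Principle (Theorem~\ref{RP}) to pass between the five-point module and the relevant four-point modules $\Hgr\XX_{\{a,b,c,d\}}$, $\Hgr\XX_{\{a,p,c,d\}}$, $\Hgr\XX_{\{p,b,c,d\}}$, collapsing the composite maps through the deleted levelset node.

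The heart of the argument is then bookkeeping: the Diamond Principle tells us precisely how interval multiplicities on the "$V_k^+$" side ($\XX_a^b$ present) correspond to those on the "$V_k^-$" side ($\XX_p^p$ present), case by case according to whether the interval meets $\{k-1,k,k+1\}$ around the diamond node; combining this with the two Restriction Principle contractions yields, for each of the four shapes $\udt,\ddt,\uut,\dut$, an identity expressing $\langle \text{shape} \mid \Hgr\XX_{\{a,b,c,d\}}\rangle$ as the sum of the corresponding multiplicities in the two sub-rectangle modules. The decorations at the $b$ and $c$ ends (whether the feature "expires" or "is killed") are governed by the maps $\Hgr\XX_b^c \to \Hgr\XX_b^b$ and $\Hgr\XX_c^c \to \Hgr\XX_c^d$, which are untouched by a split of $[a,b]$, so the "tick type" is preserved and the four shapes do not mix — this is the point that keeps the four measures separately additive rather than merely their sum.

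The main obstacle I anticipate is the combinatorial case analysis in matching up intervals across the Diamond and Restriction steps: one must carefully track, for a shape supported over $[b,c]$ with prescribed behavior at the $a$-end, which intervals over the five-point index set restrict to it, and verify that the "extra term" phenomenon in the Restriction Principle (an interval abutting the contracted edge from both sides) is correctly accounted for and does not leak multiplicity between the four shape classes. A secondary subtlety is handling the degenerate/infinite endpoints ($a=-\infty$ or $d=+\infty$, with $\XX^{-\infty}_{-\infty}, \XX^{+\infty}_{+\infty}$ empty), but since those levelsets are empty the corresponding homology vanishes and the diamonds there are trivially exact, so these cases reduce to the generic one. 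I would organize the writeup by first stating the horizontal-split identity as a lemma for all four shapes simultaneously, proving it via one Diamond application plus two Restriction applications, and then invoking coordinate reversal for the vertical split.
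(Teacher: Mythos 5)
Your overall strategy is the paper's: refine to the five-point levelset zigzag $\XX_{\{a,p,b,c,d\}}$, use the Restriction Principle to account for the ``extra term'' that produces the sum of two multiplicities, invoke exact Mayer--Vietoris diamonds supplied by $\Hgr$-tautness, and dispatch the vertical split by coordinate reversal. However, one step does not work as written: the passage to the left sub-rectangle $R_1 = [a,p]\times[c,d]$. You apply a single diamond, at $\XX_p^p$ with apex $\XX_a^b$, and then assert that the Restriction Principle carries you from the five-point module to all three of $\Hgr\XX_{\{a,b,c,d\}}$, $\Hgr\XX_{\{a,p,c,d\}}$ and $\Hgr\XX_{\{p,b,c,d\}}$. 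That works for $\Hgr\XX_{\{a,b,c,d\}}$ (your diamond produces $\XX_a^b$) and for $\Hgr\XX_{\{p,b,c,d\}}$ (a genuine sub-zigzag of the five-point one), but it fails for $\Hgr\XX_{\{a,p,c,d\}}$: that zigzag contains the space $\XX_p^c$, which appears nowhere in $\XX_{\{a,p,b,c,d\}}$ and cannot be manufactured by the Restriction Principle --- there is no pair of consecutive same-direction maps in the five-point zigzag whose composite lands in $\Hgr(\XX_p^c)$, since the arrows at $\XX_p^b$ and $\XX_b^c$ point the wrong way.

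The missing ingredient is a second application of the Diamond Principle, at the node $\XX_b^b$ with apex $\XX_p^c$ and legs $\XX_p^b$, $\XX_b^c$; this diamond is exact because $\XX_b^b$ is $\Hgr$-taut and $\XX_p^c$ is a closed, hence normal, neighborhood of it. The paper's proof uses exactly these two diamonds (apex $\XX_a^b$ over base $\XX_p^p$, and apex $\XX_p^c$ over base $\XX_b^b$), sandwiched between two Restriction steps. With that one addition your bookkeeping goes through: the Restriction step plus the first diamond split $\mu^{\xx}_{\Hgr\XX}(R)$ into a summand equal to $\mu^{\xx}_{\Hgr\XX}(R_2)$ and a summand supported from $\XX_p^p$ to $\XX_c^c$ in the five-point zigzag; the second diamond (the case where the interval contains all three middle indices, so the multiplicity is unchanged) followed by Restriction identifies the latter with $\mu^{\xx}_{\Hgr\XX}(R_1)$. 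Everything else in your outline --- the reduction to the horizontal split via coordinate reversal, the tautness justification for the first diamond, the observation that the right-hand end of the zigzag is untouched so the four types do not mix, and the treatment of infinite endpoints --- is sound and matches the paper.
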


\begin{proof}
Let $R = [a,b] \times [c,d]$ and consider a horizontal split
\[
R_1 = [a,p] \times [c,d],
\qquad
R_2 = [p,b] \times [c,d],
\]
so $a < p < b < c < d$.
The diagram
 \[
\begin{tikzpicture}[xscale=1.2,yscale=1.2]
\draw (1,1) node(11) {$\XX_b^c$} ;
\draw (0,0) node(00){$\XX_b^b$};
\draw (-1,1) node(011){$\XX_p^b$} ;
\draw (2,0) node(20) {$\XX_c^c$} ;
\draw (4,0) node(40) {$\XX_d^d$} ;
\draw (3,1) node(31) {$\XX_c^d$} ;
\draw (-2,0) node(020) {$\XX_p^p$} ;
\draw (-2,2) node(022) {$\XX_a^b$} ;
\draw (-4,0) node(040) {$\XX_a^a$} ;
\draw (-3,1) node(031) {$\XX_a^p$} ;
\draw (0,2) node(02) {$\XX_p^c$} ;
\draw[->] (00) -- (011); 
\draw[->] (20) -- (11);
\draw[->] (00) -- (11);
\draw[->] (040) -- (031);
\draw[->] (020) -- (031);
\draw[->] (020) -- (011);
\draw[->] (20) -- (31);
\draw[->] (40) -- (31);
\draw[->] (11) -- (02);
\draw[->] (011) -- (02);
\draw[->] (031) -- (022);
\draw[->] (011) -- (022);
\end{tikzpicture}
%
\]
contains the zigzags $\XX_{\{a,b,c,d\}}$, $\XX_{\{a,p,c,d\}}$, $\XX_{\{p,b,c,d\}}$ for all three rectangles. When we apply $\Hgr$, the two diamonds in the resulting diagram are exact since the levelsets $\XX_p^p, \XX_b^b$ are $\Hgr$-taut.
We calculate:
\[
\begin{array}{ll}
	\mudu_{\Hgr\XX}(R)
	= \zzhsplituuu{1}
	& = \zzhsplituuu{3} + \zzhsplituuu{2} \\
	& = \zzhsplituuu{6} + \zzhsplituuu{4} \\
	& =  \zzhsplituuu{7} + \zzhsplituuu{5}
	= \mudu_{\Hgr\XX}(R_1) + \mudu_{\Hgr\XX}(R_2).
\end{array}
\]
In the first line we add two extra nodes to refine the 7-term zigzag to a 9-term zigzag and use the Restriction Principle. In the second line we use the Diamond Principle twice. In the third line we drop two nodes in each term and use the Restriction Principle again.

Similar calculations establish the additivity of $\muud_{\Hgr\XX}$, $\mudd_{\Hgr\XX}$ and $\muuu_{\Hgr\XX}$ under horizontal splitting.
%
%
Additivity under vertical splitting follows by coordinate-reversal symmetry.
\end{proof}

\subsection{Finiteness}
\label{sec:finiteness}

We now consider the finiteness of the four r-measures $\mu^\xxs_{\Hgr\XX}$. As discussed in Section~\ref{subsec:measures}, finiteness of an r-measure implies that its decorated persistence diagram is defined everywhere in~$\Hp$; in general the diagram is defined in the finite support of the r-measure.

It turns out to be essentially the same issue as the finiteness of the \emph{well groups}~\cite{bendich,EMP}. Well groups measure that part of the homology of a fiber $\Hgr(\XX_m^m)$ of an $\RR$-space that is stable under $\epsilon$-perturbations of the coordinate. One defines
\[
\Wgr (\Hgr\XX; m, \epsilon)
=
\bigcap_g
\operatorname{image} \big[
\Hgr (g^{-1}(q)) \longrightarrow \Hgr \XX_{q-\epsilon}^{q+\epsilon}
\big]
\]
where the intersection is taken over all $\epsilon$-perturbations $g$ of the coordinate~$f$, perhaps in a suitable regularity class. Considering the perturbations $g = f\pm\epsilon$, it follows that the well group is contained in%
\footnote{%
Indeed, the well group is equal to this intersection if the class of perturbations has $\Hgr$-taut fibers.
}
\[
\;\operatorname{image}\!
\big[
\Hgr \XX_{q-\epsilon}^{q-\epsilon}
\longrightarrow \Hgr \XX_{q-\epsilon}^{q+\epsilon}
\big]
\,\cap\,
\operatorname{image}\!
\big[
\Hgr \XX_{q+\epsilon}^{q+\epsilon}
\longrightarrow \Hgr \XX_{q-\epsilon}^{q+\epsilon}
\big]
\]
and therefore its rank is bounded by
\[
\langle
\onof{6} \mid
\Hgr_{q-\epsilon}^{q-\epsilon}
\longrightarrow
\Hgr_{q-\epsilon}^{q+\epsilon}
\longleftarrow
\Hgr_{q+\epsilon}^{q+\epsilon}
\rangle
=
\langle
\onof{6} \mid \Hgr \XX_{\{q-\epsilon, q+\epsilon\}}
\rangle.
\]
This takes the same form as the term that we need to bound.

\begin{lemma}
\label{bendichprop}
Let $\XX = (X, f)$ be an $\RR$-space and $\Hgr$ be a homology functor. For any rectangle $R= [a, b]\times [c, d]$ with $a<b<c<d$ we have
\[
 \muud_{\Hgr\XX}(R)
 + \mudd_{\Hgr\XX}(R)
 + \muuu_{\Hgr\XX} (R)
 + \mudu_{\Hgr\XX} (R)
 \leq
 \langle  \wud \mid \Hgr \XX_{\{a, b, c, d\}}\rangle
 =
 \langle  \onof{6} \mid \Hgr \XX_{\{b, c\}}\rangle.
 \]
\end{lemma}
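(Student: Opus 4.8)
The plan is to treat this as a purely combinatorial fact about the interval decomposition of the single seven-term zigzag module $\Hgr\XX_{\{a,b,c,d\}}$, whose spaces sit in the order $\Hgr\XX_a^a, \Hgr\XX_a^b, \Hgr\XX_b^b, \Hgr\XX_b^c, \Hgr\XX_c^c, \Hgr\XX_c^d, \Hgr\XX_d^d$; no tautness hypothesis and no Mayer--Vietoris argument enters. First I would record, directly from the definitions of Section~\ref{subsec:4-measures}, that each of $\muud_{\Hgr\XX}(R)$, $\mudd_{\Hgr\XX}(R)$, $\muuu_{\Hgr\XX}(R)$, $\mudu_{\Hgr\XX}(R)$ is the multiplicity in this zigzag of a single interval module: $\zzud$ supported on $\{\XX_b^b,\XX_b^c,\XX_c^c\}$, $\zzdd$ on $\{\XX_a^b,\XX_b^b,\XX_b^c,\XX_c^c\}$, $\zzuu$ on $\{\XX_b^b,\XX_b^c,\XX_c^c,\XX_c^d\}$, and $\zzdu$ on $\{\XX_a^b,\XX_b^b,\XX_b^c,\XX_c^c,\XX_c^d\}$. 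The key point is that these four intervals are pairwise distinct and that each of them contains the three consecutive vertices $\XX_b^b,\XX_b^c,\XX_c^c$.

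Next I would restrict $\Hgr\XX_{\{a,b,c,d\}}$ onto the sub-index set $\{\XX_b^b,\XX_b^c,\XX_c^c\}$; that restriction is exactly the three-term zigzag module $\Hgr\XX_{\{b,c\}}$. The Restriction Principle (Theorem~\ref{RP}) --- used in the form valid for restriction onto an arbitrary sub-index set, which is immediate from its proof by restricting a chosen interval decomposition --- then gives
\[
\langle \onof{6} \mid \Hgr\XX_{\{b,c\}}\rangle = \sum_{[\hat p,\hat q]} \langle [\hat p,\hat q] \mid \Hgr\XX_{\{a,b,c,d\}}\rangle ,
\]
the sum running over all intervals $[\hat p,\hat q]$ of the seven-term index set whose support contains $\{\XX_b^b,\XX_b^c,\XX_c^c\}$. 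By definition the left-hand side is $\langle\wud\mid\Hgr\XX_{\{a,b,c,d\}}\rangle$ (the symbol $\wud$ being the bar over $\XX_b^b,\XX_b^c,\XX_c^c$ with unspecified behaviour at each end), so this already establishes the equality claimed in the Lemma. Since the four intervals $\zzud,\zzdd,\zzuu,\zzdu$ all have support containing $\{\XX_b^b,\XX_b^c,\XX_c^c\}$, they appear among the indices of this sum; as every term lies in $\{0,1,2,\dots,\infty\}$ and is therefore non-negative, the partial sum over those four indices --- which is exactly $\muud_{\Hgr\XX}(R)+\mudd_{\Hgr\XX}(R)+\muuu_{\Hgr\XX}(R)+\mudu_{\Hgr\XX}(R)$ --- is at most the full sum. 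That is the desired inequality.

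I do not expect a genuine obstacle: at bottom the argument is just ``a partial sum is at most the total sum,'' with the Restriction Principle furnishing the identity on the right-hand side. The only points needing care are bookkeeping: reading off correctly which intervals of the seven-term zigzag the pictures $\zzud,\zzdd,\zzuu,\zzdu$ name (via the ``killed/expires'' dictionary of Section~\ref{subsec:4-measures}) and checking that each one contains the middle three vertices; and noting that the restriction used deletes the end vertices $\Hgr\XX_a^a,\Hgr\XX_d^d$ along with $\Hgr\XX_a^b,\Hgr\XX_c^d$, which is a restriction onto a sub-index set --- slightly more general than the single interior-vertex deletion spelled out in the statement of Theorem~\ref{RP}, but covered verbatim by the same one-line argument.
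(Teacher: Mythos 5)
Your proof is correct and is essentially the paper's own argument: the paper also invokes the Restriction Principle to identify $\langle \wud \mid \Hgr\XX_{\{a,b,c,d\}}\rangle = \langle \onof{6} \mid \Hgr\XX_{\{b,c\}}\rangle$ as a sum of nonnegative multiplicities over the (nine) intervals of the seven-term zigzag that contain the middle three nodes, of which $\zzud, \zzdd, \zzuu, \zzdu$ are four, so the inequality is just a partial sum versus the full sum. One small caveat on wording: ``restriction onto an arbitrary sub-index set'' overstates the Restriction Principle --- deleting an interior vertex requires the adjacent arrows to be composable, which fails at the wedges of a zigzag --- but what you actually do here is truncate both ends (a contiguous sub-index set), which is always valid and is indeed handled by the same one-line argument as Theorem~\ref{RP}.
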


\begin{proof}
%
By the Restriction Principle 
\begin{align*}
\langle  \wud  \rangle
&\geq
\langle \zzud\rangle + \langle \zzdd\rangle
+ \langle \zzuu\rangle + \langle \zzdu\rangle 
\\
&=
(\muud_{\Hgr\XX} + \mudd_{\Hgr\XX}
+ \muuu_{\Hgr\XX} + \mudu_{\Hgr\XX})(R) .  
\qedhere
\end{align*}
\end{proof}

\begin{proposition}
\label{prop:finite1}
Let $\XX=(X, f)$. Then $\mudu_{\Hgr \XX}$, $\muuu_{\Hgr \XX}$, $\muud_{\Hgr \XX}$ and $\mudd_{\Hgr \XX}$ are finite for any~$\Hgr$ under any of the following circumstances:

\smallskip
(i)
$X$ is a locally compact polyhedron and $f$ a proper continuous map.

\smallskip
(iii)
$X$ is a smooth manifold and $f$ is a proper Morse function.

\smallskip
(iv)
$X$ is a locally compact polyhedron and $f$ is a proper piecewise-linear map.

\smallskip
(v)
$X \subseteq \RR^n \times \RR$ is a closed definable set in some o-minimal structure and $f$ is the projection onto the second factor.
\end{proposition}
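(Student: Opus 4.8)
The plan is to reduce the statement to a single finiteness assertion about the homology of a closed slab $f^{-1}[b,c]$, and then verify that assertion from the structure theory available in each case.

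By Lemma~\ref{bendichprop}, for every rectangle $R=[a,b]\times[c,d]$ the four nonnegative quantities $\muud_{\Hgr\XX}(R),\mudd_{\Hgr\XX}(R),\muuu_{\Hgr\XX}(R),\mudu_{\Hgr\XX}(R)$ are each bounded above by $\langle \onof{6}\mid\Hgr\XX_{\{b,c\}}\rangle$, a quantity depending only on the (finite) reals $b<c$. For the three-term zigzag $\Hgr\XX_b^b\to\Hgr\XX_b^c\leftarrow\Hgr\XX_c^c$ the multiplicity of the full interval is $\dim\big(\Img(\Hgr\XX_b^b\to\Hgr\XX_b^c)\cap\Img(\Hgr\XX_c^c\to\Hgr\XX_b^c)\big)$, hence is at most $\dim\Hgr(\XX_b^c)=\dim\Hgr(f^{-1}[b,c])$ (also at most $\dim\Hgr(\XX_b^b)$ and $\dim\Hgr(\XX_c^c)$). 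So it suffices to show, in each of the four cases, that $f^{-1}[b,c]$ has finite-dimensional homology in the ambient degree for all $b<c$.

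Cases (iii), (iv) and (v) are immediate. Since $f$ is proper, $\XX_b^c=f^{-1}[b,c]$ is compact. In case (iv) it is a compact polyhedron; in case (v) it is a compact definable set, hence triangulable by a finite simplicial complex via the o-minimal triangulation theorem~\cite{ominimal}; in case (iii) it has the homotopy type of a finite CW complex by Morse theory, since the proper Morse function $f$ has finitely many critical points in $f^{-1}[b,c]$. In all three cases $f^{-1}[b,c]$ has finitely generated homology in every degree, so $\langle\onof{6}\mid\Hgr\XX_{\{b,c\}}\rangle\le\dim\Hgr(f^{-1}[b,c])<\infty$, and Lemma~\ref{bendichprop} finishes the argument.

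Case (i) is the one I expect to be the real obstacle. For a merely proper continuous $f$ on a locally compact polyhedron, $f^{-1}[b,c]$ need not be a finite subpolyhedron, and neither it nor the fibers $f^{-1}(b),f^{-1}(c)$ need have finite-dimensional homology --- for instance $f$ may oscillate near the level $b$ with amplitude shrinking to zero, producing infinitely many slab components accumulating there. I would try one of two routes. The first is to reduce to case (iv) by approximating $f$ with a proper piecewise-linear map and comparing the two $\RR$-spaces. The second is a direct argument exploiting that a class of $\Hgr(\XX_b^c)$ which contributes to $\langle\onof{6}\mid\Hgr\XX_{\{b,c\}}\rangle$ lies in the images of \emph{both} boundary fibers, so it is ``supported over all of $[b,c]$''; one would then show, using the Restriction and Diamond Principles together with compactness of $f^{-1}[b,c]$, that only finitely many independent classes can be seen from both ends, the accumulating local features being precisely those that are not. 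Making this precise in every homological degree is the crux of the matter.
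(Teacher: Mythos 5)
Your reduction via Lemma~\ref{bendichprop} to the single quantity $\langle \onof{6} \mid \Hgr\XX_{\{b,c\}}\rangle$, and your treatment of cases (iii), (iv), (v) by observing that the compact slab $\XX_b^c$ has the homotopy type of a finite cell complex, is exactly what the paper does. The problem is case (i), where you have correctly diagnosed the difficulty (the slab and its boundary fibers may have infinite-dimensional homology) but have not actually proved anything: you offer two candidate strategies and stop at ``making this precise \dots\ is the crux of the matter.'' That is a genuine gap, since (i) is the only nontrivial part of the proposition.

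The missing idea is a combination of your two routes, and it is short once stated. Approximate $f$ by a proper piecewise-linear $g$ with $\|g-f\|\leq\epsilon$ and pick $m$ with $b+2\epsilon < m < c-2\epsilon$; then $Y=g^{-1}(m)$ is a \emph{finite} simplicial complex (properness plus PL), it is contained in the slab, and it separates it: setting $U=\XX_b^c\cap g^{-1}(-\infty,m]$ and $V=\XX_b^c\cap g^{-1}[m,+\infty)$ one has $\XX_b^c=U\cup V$, $Y=U\cap V$, and crucially $\XX_b^b\subseteq U$ and $\XX_c^c\subseteq V$ because $\|g-f\|\leq\epsilon$. Since $Y$ is $\Hgr$-taut as a fiber of $(X,g)$, the diamond on $U,V,Y,\XX_b^c$ is Mayer--Vietoris, and the Diamond and Restriction Principles convert $\langle \onof{6} \mid \Hgr\XX_{\{b,c\}}\rangle$ into a multiplicity for the zigzag routed through $\Hgr(Y)$; any interval reaching both ends must pass through $\Hgr(Y)$, so the count is bounded by $\dim\Hgr(Y)<\infty$. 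This is precisely the rigorous form of your heuristic that ``only finitely many independent classes can be seen from both ends'': the finitely many are those visible from the separating finite complex $Y$. Note that your first route as literally stated --- globally comparing the $\RR$-spaces $(X,f)$ and $(X,g)$ --- would be circular or at least require the stability machinery, which itself presupposes the measures are defined; the paper avoids this by using $g$ only to manufacture the single separating levelset $Y$.
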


\begin{proof}
In cases (iii), (iv), (iv) each slice $\XX_b^c$ has the homotopy type of a finite cell complex, and therefore has finite-dimensional homology.

The proof of (i) is a little more involved.
%
%
Let $R = [a, b] \times [c, d]$. Choose $m$ and $\epsilon>0$ such that $b+2\epsilon < m < c-2\epsilon$, and approximate $f$ with a piecewise-linear map $g \colon X \to \RR$ for which $\|g - f\|\leq \epsilon$. Then $g$ is also proper, and $Y = g^{-1}(m)$ is triangulable as a finite simplicial complex and is $\Hgr$-taut as a fiber of $(X,g)$.

We can split the neighborhood $\XX_b^c$ into lower- and upper-neighborhoods of~$Y$ by defining
\[
U = \XX_b^c \cap g^{-1}(-\infty,m],
\quad
V = \XX_b^c \cap g^{-1}[m,+\infty).
\]
Thus $\XX_b^c = U \cup V$ and $Y = U \cap V$.
Since $||g-f||\leq \epsilon$, we also have $\XX_{b}^{b} \subseteq U$ and $\XX_{c}^{c} \subseteq V$.

Consider the following diagram of spaces and maps:
\[
\begin{tikzpicture}[xscale=1.2,yscale=1.2, font=\small]
\draw
(1,1) node(11) {$\Hgr(\XX_{b}^{c})$} ;
\draw
(0,0) node(00){$\Hgr(U)$}
(2,0) node(20){$\Hgr(V)$}
(1,-1) node(101){$\Hgr(Y)$}
(-1,-1) node(001){$\Hgr(\XX_{b}^{b})$}
(-3,-1) node(0301){$\Hgr(\XX_{a}^{a})$}
(-2,0) node(020){$\Hgr(\XX_{a}^{b})$}
(3,-1) node(201){$\Hgr(\XX_{c}^{c})$}
(5,-1) node(501){$\Hgr(\XX_{d}^{d})$}
(4,0) node(40){$\Hgr(\XX_{c}^{d})$};

\draw[->] (0301) -- (020); 
\draw[->] (001) -- (020);
\draw[->] (001) -- (00); 
\draw[->] (201) -- (20);
\draw[->] (00) -- (11); 
\draw[->] (20) -- (11);
\draw[<-] (20) -- (101);
\draw[<-] (00) -- (101);

\draw[->] (501) -- (40); 
\draw[->] (201) -- (40);
\end{tikzpicture}
\]
By the Restriction and Diamond Principles (since $Y$ is $\Hgr$-taut) we have
\[
\left\langle \zzspider{1} \right\rangle
=\left \langle \zzspider{2}\right\rangle
= \left\langle \zzspider{3} \right\rangle
\leq \Hgr(Y) < \infty.
\]
The result now follows from Lemma~\ref{bendichprop}.
\end{proof}

\subsection{The Four Diagrams of Parametrized Homology}
\label{paramhom}

Let $\XX = (X, f)$ be an $\RR$-space and let $\Hgr$ be a homology functor with field coefficients. 
Quantities $\mudd_\XX$, $\mudu_\XX$, $\muud_\XX$, and $\muuu_\XX$ capture the way topological features of $\XX$ perish at endpoints. When they are r-measures, each defines a persistence diagram via the Equivalence Theorem. We denote these four decorated persistence diagrams by $\Dgm^\dds(\XX)$, $\Dgm^\dus(\XX)$, $\Dgm^\uds(\XX)$, and $\Dgm^\uus(\XX)$. These, collectively, comprise the \emph{parametrized homology} of $\XX$ with respect to the homology functor $\Hgr$. 

\begin{theorem}\label{definepar}
We can define parametrized homology of ${\XX = (X, f)}$ when: 

\smallskip
(i) $X$ is a locally compact polyhedron, $f$ is proper, and $\Hgr$ is Steenrod--Sitnikov homology.

\smallskip
(iii)
$X$ is a smooth manifold and $f$ is a proper Morse function.

\smallskip
(iv)
$X$ is a locally compact polyhedron and $f$ is a proper piecewise-linear map.

\smallskip
(v)
$X \subseteq \RR^n \times \RR$ is a closed definable set in some o-minimal structure and $f$ is the projection onto the second factor.
\end{theorem}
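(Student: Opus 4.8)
The plan is to assemble the theorem from results already established, so the argument is essentially bookkeeping. By the Equivalence Theorem (Theorem~\ref{equivextended}), a function $\Rect(\Hp)\to\{0,1,2,\dots\}\cup\{\infty\}$ corresponds to a decorated persistence diagram in $\Hp$ precisely when it is a finite r-measure, i.e.\ when it is additive under horizontal and vertical splitting and finite on every rectangle. So it suffices to check, under each of the hypotheses (i), (iii), (iv), (v), that all four functions $\muud_{\Hgr\XX}$, $\mudd_{\Hgr\XX}$, $\muuu_{\Hgr\XX}$, $\mudu_{\Hgr\XX}$ have these two properties; the Equivalence Theorem then furnishes the four decorated persistence diagrams $\Dgm^\dds(\XX)$, $\Dgm^\dus(\XX)$, $\Dgm^\uds(\XX)$, $\Dgm^\uus(\XX)$ that constitute parametrized homology.

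For additivity I would invoke Theorem~\ref{thm:additivity}, which gives additivity of all four measures as soon as the levelsets of $\XX$ are $\Hgr$-taut. In case (i) the hypotheses ``$X$ locally compact, $f$ proper, $\Hgr$ Steenrod--Sitnikov'' are precisely those of Proposition~\ref{prop:taut-ex}(i) (a locally compact polyhedron is in particular locally compact), so every levelset is $\Hgr$-taut; in cases (iii), (iv), (v) the matching clauses of Proposition~\ref{prop:taut-ex} apply verbatim. Hence the four measures are additive in all four cases.

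For finiteness I would simply cite Proposition~\ref{prop:finite1}, whose clauses (i), (iii), (iv), (v) assert exactly that $\muud_{\Hgr\XX}(R)$, $\mudd_{\Hgr\XX}(R)$, $\muuu_{\Hgr\XX}(R)$, $\mudu_{\Hgr\XX}(R)$ are finite for every $R\in\Rect(\Hp)$ under precisely these hypotheses (noting that clause (i) there is stated for a locally compact polyhedron with a proper map, which is what we have). Combining the two steps, each of the four functions is a finite r-measure, and the Equivalence Theorem produces the claimed decorated persistence diagrams.

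There is no substantive obstacle here: every ingredient has already been proved. The only point requiring care is verifying that in each case the stated hypotheses imply \emph{simultaneously} the tautness needed for Theorem~\ref{thm:additivity} (via Proposition~\ref{prop:taut-ex}) and the regularity needed for finiteness (via Proposition~\ref{prop:finite1}). Case (i) is the one to watch, since there tautness relies on $\Hgr$ being Steenrod--Sitnikov homology while finiteness uses only that $X$ is a locally compact polyhedron and $f$ is proper; both of these are subsumed by the hypothesis of Theorem~\ref{definepar}(i), so the two halves fit together without friction.
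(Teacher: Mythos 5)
Your argument is correct and is essentially the paper's own proof: the paper's proof is the one-liner ``Additivity follows from Proposition~\ref{prop:taut-ex} and finiteness from Proposition~\ref{prop:finite1},'' which implicitly chains Proposition~\ref{prop:taut-ex} through Theorem~\ref{thm:additivity} exactly as you spell out. Your remark about case (i) needing tautness from the Steenrod--Sitnikov hypothesis and finiteness from the locally-compact-polyhedron hypothesis is a correct and useful reading of why both propositions apply simultaneously.
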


\begin{proof}
Additivity follows from Proposition~\ref{prop:taut-ex} and finiteness from Proposition~\ref{prop:finite1}.
\end{proof}

\subsection{Levelset Zigzag Persistence}
\label{subsec:LZZ}

In some situations finite zigzag diagrams carry all the needed information. Let $\XX = (X, f)$ be an $\RR$-space constructed as follows.  There is a finite set of real-valued indices $S = \{a_1, . . . , a_n\}$ (listed in increasing order), called the \emph{critical values} of $\XX$. Then:
\begin{itemize}
\item
For $1 \leq i \leq n$, $V_i$ is a locally path-connected compact space;
\item
For $1 \leq i \leq n-1$, $E_i$ is a locally path-connected compact space;
\item
For $1 \leq i \leq n-1$, $l_i\colon E_i \to V_i$ and $r_i\colon E_i \to V_{i+1}$ are continuous maps.
\end{itemize}
Let $X$ be the quotient space obtained from the disjoint union of the spaces $V_i \times \{a_i\}$ and $E_i \times [a_i,a_{i+1}]$ by making the identifications $(l_i(x),a_i) \sim (x,a_i)$ and $(r_i(x), a_{i+1})\sim (x,a_{i+1})$ for all $i$ and all $x \in E_i$. Let $f\colon X \to \RR$ be the projection onto the second factor. In this paper, we follow Carlsson et al.\ \cite{Zigzagpersistenthomologyandreal} in calling such an $\XX = (X,f)$ a \emph{Morse type} $\RR$-space. (In \cite{Silva_Munch_Patel_2015} they are called \emph{constructible $\RR$-spaces}.) Such $\RR$-spaces include $\XX=(X, f)$, where $X$ is a compact manifold and $f$ a Morse function, and $X$ a compact polyhedron and $f$ piecewise linear. 

We can track the appearance and disappearance of topological features using \emph{levelset zigzag persistence} construction~\cite{Zigzagpersistenthomologyandreal}. Given $\XX=(X, f )$ of Morse type, select a set of indices $s_i$ which satisfy
\[
-\infty < s_0 < a_1 \ldots < a_{n} < s_n < \infty, 
\]
and build a zigzag diagram that serves as a model for~$\XX$:
\begin{center}
\begin{tikzpicture}[xscale=1.05,yscale=1.05]
\draw (-2, 0.5)node(010){$\XX_{\{s_0, \ldots, s_n\}}:$} (1,1) node(11) {$\XX_{s_0}^{s_1}$} (3,1) node(31) {$\XX_{s_1}^{s_2}$} (5,1) node(51) {$\ldots$} (7,1) node(71) {$\XX_{s_{n-2}}^{s_{n-1}}$} (9,1) node(91) {$\XX_{s_{n-1}}^{s_n}$} ;
\draw (0,0) node(00){$\XX_{s_0}^{s_0}$} (2,0) node(20){$\XX_{s_1}^{s_1}$}  (4,0) node(40){$\XX_{s_2}^{s_2}$} (6,0) node(60){$\XX_{s_{n-2}}^{s_{n-2}}$}  (8,0) node(80){$\XX_{s_{n-1}}^{s_{n-1}}$} (10,0) node(100){$\XX_{s_{n}}^{s_{n}}.$}  ;

\draw[->] (00) -- (11); 
\draw[->] (20) -- (11);
\draw[->] (20) -- (31);
\draw[->] (40) -- (31);
\draw[->] (40) -- (51);
\draw[->] (60) -- (51);  
\draw[->] (80) -- (71);
\draw[->] (60) -- (71);  
\draw[->] (80) -- (91);
\draw[->] (100) -- (91);  
\end{tikzpicture}
\end{center}
Apply homology functor $\Hgr$ to obtain:
\begin{center}
\begin{tikzpicture}[xscale=1.05,yscale=1.05]
\draw (-2, 0.5)node(010){$\Hgr\XX_{\{s_0, \ldots, s_n\}}:$} (1,1) node(11) {$\Hgr(\XX_{s_0}^{s_1})$} (3,1) node(31) {$\Hgr(\XX_{s_1}^{s_2})$} (5,1) node(51) {$\ldots$} (7,1) node(71) {$\Hgr(\XX_{s_{n-2}}^{s_{n-1}})$} (9,1) node(91) {$\Hgr(\XX_{s_{n-1}}^{s_n})$} ;
\draw (0,0) node(00){$\Hgr(\XX_{s_0}^{s_0})$} (2,0) node(20){$\Hgr(\XX_{s_1}^{s_1})$}  (4,0) node(40){$\Hgr(\XX_{s_2}^{s_2})$} (6,0) node(60){$\Hgr(\XX_{s_{n-2}}^{s_{n-2}})$}  (8,0) node(80){$\Hgr(\XX_{s_{n-1}}^{s_{n-1}})$} (10,0) node(100){$\Hgr(\XX_{s_{n}}^{s_{n}}).$}  ;

\draw[->] (00) -- (11); 
\draw[->] (20) -- (11);
\draw[->] (20) -- (31);
\draw[->] (40) -- (31);
\draw[->] (40) -- (51);
\draw[->] (60) -- (51);  
\draw[->] (80) -- (71);
\draw[->] (60) -- (71);  
\draw[->] (80) -- (91);
\draw[->] (100) -- (91);  
\end{tikzpicture}
\end{center}
This quiver representation is decomposable by Gabriel's Theorem~\cite{quiver}. 

We translate between the notation of intervals that appear in the levelset zigzag persistence of $\XX$  and critical values as follows:

\begin{alignat*}{4}
[\Hgr(\XX_{s_{i-1}}^{s_i}), \Hgr(\XX_{s_{j-1}}^{s_j})] & \text{ corresponds to } & [a_i, a_j] & \text{ for }1\leq i \leq j \leq n,\\
 [\Hgr(\XX_{s_{i-1}}^{s_i}), \Hgr(\XX_{s_{j-1}}^{s_j})]  & \text{ corresponds to }&[a_i, a_j) & \text{ for }1\leq i < j \leq n+1,\\
 [\Hgr(\XX_{s_i}^{s_i}), \Hgr(\XX_{s_{j-1}}^{s_j})]  &\text{ corresponds to } & (a_i, a_j] & \textrm{ for }1\leq i \leq j \leq n,\\
[\Hgr(\XX_{s_i}^{s_i}), \Hgr(\XX_{s_{j-1}}^{s_{j-1}})]  &\text{ corresponds to } & (a_i, a_j) & \textrm{ for }1\leq i < j \leq n+1.\\
\end{alignat*}
We interpret $a_0$ as $-\infty$ and $a_{n+1}$ as $\infty$.

The collection of these pairs of critical values, taken with multiplicity and labelled by the interval type is called the levelset zigzag persistence diagram of $\XX$ and denoted by $\DgmZZ(\Hgr\XX)$.

The four quantities defined in Section~\ref{subsec:4-measures}, $\muud_{\Hgr\XX}$, $\mudd_{\Hgr\XX}$, $\muuu_{\Hgr\XX}$ and $\mudu_{\Hgr\XX}$, are measures when $\XX$ is of Morse type. Additivity follows from (ii) of Proposition~\ref{prop:taut-ex}, while finiteness from the assumption that all interlevelsets and levelsets have finite dimensional homology groups. 

In fact, parametrized homology and levelset zigzag persistence of a Morse type $\RR$-space carry the same information, as the following theorem demonstrates. 
\begin{theorem}\label{16to4Morse}
If $\XX$ is an $\RR$-space of Morse type with critical values
\[
a_1 < a_2 < \ldots < a_n,
\]
then the levelset zigzag persistence diagram of $\XX$, $\DgmZZ(\Hgr\XX)$, contains the same information as the four diagrams  $\Dgm^\dds(\XX)$, $\Dgm^\dus(\XX)$, $\Dgm^\uds(\XX)$, and $\Dgm^\uus(\XX)$.  To be more precise, 
\begin{alignat*}{3}
(a_i, a_j) \in \Dgm^\uds\!(\Hgr\XX) & \;\;\text{if and only if}\;\;
& (a_i^+, a_j^-)\in \DgmZZ(\Hgr\XX) 
\\
[a_i, a_j) \in \Dgm^\dds\!(\Hgr\XX) & \;\;\text{if and only if}\;\;
& (a_i^-, a_j^-)\in  \DgmZZ(\Hgr\XX) 
\\
(a_i, a_j]\in \Dgm^\uus\!(\Hgr\XX) & \;\;\text{if and only if}\;\;
& (a_i^+, a_j^+)\in  \DgmZZ(\Hgr\XX)  
\\
[a_i, a_j]\in \Dgm^\dus\!(\Hgr\XX) & \;\;\text{if and only if}\;\;
& (a_i^-, a_j^+) \in  \DgmZZ(\Hgr\XX) . 
\end{alignat*}
Diagrams $\Dgm^\uds\!(\Hgr\XX)$, $\Dgm^\dds\!(\Hgr\XX)$, $\Dgm^\uus\!(\Hgr\XX)$ and $\Dgm^\dus\!(\Hgr\XX)$ contain no decorated points with nonzero multiplicity other than those specified above.
\end{theorem}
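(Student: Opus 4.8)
The plan is to compute each of the four multiplicities $\mu^{\xxs}_{\Hgr\XX}(R)$ directly in terms of the levelset zigzag diagram $\Hgr\XX_{\{s_0,\dots,s_n\}}$, for rectangles $R$ placed so that $[b,c]$ straddles a block of critical values and $a,d$ are chosen generically. Concretely, I would take a rectangle $R = [a,b]\times[c,d]$ with $a,b,c,d$ each lying in one of the open gaps between consecutive critical values (or equal to $\pm\infty$), say $s_{i'} \le a < a_{i'+1}$, and so on, and observe that $\XX_a^b$, $\XX_b^c$, $\XX_c^d$, and the four one-point fibers $\XX_a^a,\dots,\XX_d^d$ are each homotopy equivalent (via inclusion) to one of the spaces appearing in the Morse-type model, because between critical values $\XX$ is a cylinder. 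Thus $\Hgr\XX_{\{a,b,c,d\}}$ is, up to isomorphism, obtained from $\Hgr\XX_{\{s_0,\dots,s_n\}}$ by restriction to a sub-diagram together with composition of chains of maps between consecutive critical slices. The Restriction Principle (Theorem~\ref{RP}) then expresses $\langle \zzud \mid \Hgr\XX_{\{a,b,c,d\}}\rangle$, etc., as a sum of multiplicities $\langle [\hat p,\hat q] \mid \Hgr\XX_{\{s_0,\dots,s_n\}}\rangle$ over the intervals $[\hat p,\hat q]$ in the full zigzag that restrict to the relevant four-term pattern.

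Next I would carry out this bookkeeping for each of the four symbols $\udt,\ddt,\uut,\dut$ separately, tracking exactly which endpoints of a full-zigzag interval survive the restriction to a four-node pattern and with which decoration. The four endpoint-types of a full-zigzag interval correspond, via the translation table in Section~\ref{paramhom} ($[a_i,a_j]\leftrightarrow[\Hgr(\XX_{i-1}^i),\Hgr(\XX_{j-1}^j)]$, and so on), to whether the interval begins/ends at an ``edge'' node or a ``vertex'' node of the model zigzag; and this is precisely what distinguishes expiry from being killed at each end. So the upshot should be: for $R$ in generic position spanning the critical values $a_i,\dots,a_j$, the measure $\muud_{\Hgr\XX}(R)$ equals the number of $(a_i^+,a_j^-)$-intervals in $\Dgm^{ZZ}$, the measure $\mudd_{\Hgr\XX}(R)$ equals the number of $(a_i^-,a_j^-)$-intervals, and likewise for the other two. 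From this, the Equivalence Theorem (Theorem~\ref{equivextended}) gives the claimed decorated points: the minimal-rectangle formula for $\mm_A$ picks out exactly these counts at the decorated points $(a_i^{\pm},a_j^{\pm})$, and forces multiplicity zero at every decorated point not of this form — because shrinking $R$ so that $[b,c]$ no longer contains a full block of critical values, or so that $a$ or $d$ crosses a critical value, drops the relevant multiplicity to what it ``should'' be.

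I would also need to handle the boundary and degenerate cases cleanly: intervals of the form $[a_i,a_i]$ (which the Diamond Principle explicitly refuses to constrain, cf.\ the Remark after Theorem~\ref{DP}), infinite intervals with $a_0 = -\infty$ or $a_{n+1}=+\infty$ (using the convention that $\XX_{-\infty}^{-\infty}$, $\XX_{+\infty}^{+\infty}$ are empty, so those fibers contribute $0$ in the appropriate slot and the half-open/closed distinctions collapse correctly), and the verification that no decorated point with $p \ge q$ or with a non-critical coordinate can acquire positive mass. The last point follows because for $R$ positioned with, say, $b$ and $c$ in the \emph{same} gap between critical values, the four-term zigzag $\Hgr\XX_{\{a,b,c,d\}}$ has all-isomorphism middle maps, so the only summands of the ``bat'' type have both ends in the interior and contribute nothing to $\zzud,\zzdd,\zzuu,\zzdu$ by the structure of those patterns.

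The main obstacle I expect is the careful combinatorial matching between the four full-zigzag interval endpoint-types and the four four-node patterns $\udt,\ddt,\uut,\dut$ — in particular getting the $\pm$ decorations to land correctly at both ends simultaneously, and confirming that the Restriction Principle's ``extra term'' phenomenon (the doubling when a restricted interval abuts a deleted node, as in the example after Theorem~\ref{RP}) either does not occur here or is accounted for. A secondary subtlety is checking the homotopy-invariance step rigorously: that inclusion of a cylinder slice into a larger cylinder slice induces an isomorphism on $\Hgr$ for the homology theories in play, and that this is compatible with all the maps in the diagram $\XX_{\{a,b,c,d\}}$ so that the restriction really is an isomorphism of zigzag modules, not merely a levelwise isomorphism.
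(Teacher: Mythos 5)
Your overall strategy — relating $\Hgr\XX_{\{a,b,c,d\}}$ to the levelset zigzag $\Hgr\XX_{\{s_0,\dots,s_n\}}$ and pushing multiplicities through, then using the Equivalence Theorem — is aligned with the paper's proof, but there is a genuine gap in the mechanism you propose for making that comparison.

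You assert that $\XX_a^b$, $\XX_b^c$, $\XX_c^d$ ``are each homotopy equivalent (via inclusion) to one of the spaces appearing in the Morse-type model,'' and that the Restriction Principle alone then expresses $\langle\zzud\mid\Hgr\XX_{\{a,b,c,d\}}\rangle$ as a sum of levelset-zigzag multiplicities. This fails as soon as $R$ is in general position: the vertices of the levelset zigzag are only the one-gap slices $\XX_{s_{i-1}}^{s_i}$ (each homotopy equivalent to a single critical fiber $V_i$) and the regular fibers $\XX_{s_i}^{s_i}\simeq E_i$, while a slice such as $\XX_b^c$ spanning several critical values is not homotopy equivalent to any of these. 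More to the point, the Restriction Principle (Theorem~\ref{RP}) only composes two \emph{same-direction} maps and deletes the intermediate space; it cannot by itself pass from the alternating zigzag of one-gap slices to the alternating zigzag of multi-gap slices, because every intermediate vertex you want to drop sits at the apex of a $\leftarrow\cdot\to$ pattern. Every such deletion requires first applying the Diamond Principle (Mayer--Vietoris), which \emph{reflects} intervals that end adjacent to the apex. That reflection is exactly what translates between ``edge node'' and ``vertex node'' endpoints, and hence exactly what controls the $\pm$ decorations you are trying to track. The paper's proof threads through this by working with the large lattice diagram of slices near $a_i$ and $a_j$ and invoking the Diamond Principle at each rhombus before composing; your plan as stated skips this and would, e.g., have no way to combine $\Hgr(\XX_{s_{i}}^{s_{i+1}})\leftarrow\Hgr(\XX_{s_{i+1}}^{s_{i+1}})\to\Hgr(\XX_{s_{i+1}}^{s_{i+2}})$ into a single map.

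A secondary imprecision: for the "no other decorations" part, the heuristic that ``shrinking $R$ so that $a$ or $d$ crosses a critical value drops the multiplicity to what it `should' be'' is too vague to constitute an argument. The paper's argument is sharper: for \emph{any} $p$ (critical or not), Morse-type gives an $\epsilon$ for which $\XX_{p-\epsilon}^p$ and $\XX_p^{p+\epsilon}$ deformation retract onto $\XX_p^p$, so the corresponding arrows in $\Hgr\XX_{\{p-\epsilon,p,q,q+\epsilon\}}$ are isomorphisms; an isomorphism forbids any interval summand that starts or stops on either side of it, which zeroes out the three wrong patterns (and, for non-critical $p$, also the fourth). This is cleaner and actually proves the claim. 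You should replace your informal remark with this observation, and make explicit that both the critical and non-critical cases of $p$ are handled by it.

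Your instinct to be careful about the Restriction Principle's ``extra term'' doubling, and about the boundary/degenerate and $\pm\infty$ cases, is correct, but the missing Diamond Principle is the real obstruction: without it the translation table between levelset-zigzag interval endpoint types and the four patterns $\udt,\ddt,\uut,\dut$ has no footing.
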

\begin{proof}
First we prove that if $[a_i, a_j]$ with multiplicity $m$, $m\geq 1$, is contained in the levelset zigzag persistence diagram of $\XX$, then $\textrm{m}_{\Dgm^\dus(\XX)}(a_i^-, a_j^+)= m$.


We select a set of indices $s_i$ which satisfy
\[
-\infty < s_0 < a_1 < s_1 < a_2 < \ldots  < s_{n-1} < a_n < s_n < \infty.
\]

By definition $[a_i, a_j]$ appears in the levelset zigzag persistence diagram with multiplicity $m$ if and only if 
\[
\langle [\Hgr(\XX_{s_{i-1}}^{s_i} ), \Hgr(\XX_{s_{j-1}}^{s_j})] \mid \Hgr\XX_{\{s_0, \ldots, s_n\}} \rangle=m.
\]
By the Diamond and the Restriction Principle
\[
\begin{array}{lcl}
\langle [\Hgr(\XX_{s_{i-1}}^{s_i} ), \Hgr(\XX_{s_{j-1}}^{s_j})] \mid \Hgr\XX_{\{s_0, \ldots, s_n\}} \rangle &= &\langle [\Hgr(\XX_{s_{i-1}}^{s_i} ), \Hgr(\XX_{s_{j-1}}^{s_j})] \mid \Hgr\XX_{\{s_{i-1}, s_i, s_{j-1}, s_j\}} \rangle.  \\
\end{array}
\]
Choose $\epsilon < \frac{1}{2}\min \{ a_i-s_{i-1}, s_j-a_{j}\}$. Observe the diagram below.
\[
\begin{tikzpicture}[xscale=1,yscale=1.2]
\draw (1,1) node(11) {$\Hgr(\XX_{s_{i-1}}^{a_i-\epsilon})$} 
(3,1) node(31) {$\Hgr(\XX_{a_i}^{a_i-\epsilon})$}
(5,1) node(51) {$\Hgr(\XX_{a_i}^{s_{i}})$} ;
\draw (4,2) node(42) {$\Hgr(\XX_{a_i-\epsilon}^{s_{i}})$} ;
\draw (2,2) node(22){$\Hgr(\XX_{s_{i-1}}^{a_{i}})$}; 
\draw (3,3) node(33){$\Hgr(\XX_{s_{i-1}}^{s_{i}})$}; 

\draw (0,0) node(00){$\Hgr(\XX_{s_{i-1}}^{s_{i-1}})$} (2,0) node(20){$\Hgr(\XX_{a_i-\epsilon}^{a_i-\epsilon})$} (4,0) node(40){$\Hgr(\XX_{a_i}^{a_i})$} (6,0) node(60){$\Hgr(\XX_{s_{i}}^{s_{i}})$}  (6,2) node(62){$\Hgr(\XX_{a_{i}}^{s_{j-1}})$} (8,0) node(80){$\Hgr(\XX_{s_{j-1}}^{s_{j-1}})$}  (10,0) node(100){$\Hgr(\XX_{a_{j}}^{a_{j}})$} (12,0) node(120){$\Hgr(\XX_{a_{j}+\epsilon}^{a_{j}+\epsilon})$} (14,0) node(140){$\Hgr(\XX_{s_{j}}^{s_{j}})$}
 (7,3) node(73){$\Hgr(\XX_{a_{i}}^{a_{j}})$} (9,1) node(91){$\Hgr(\XX_{s_{j-1}}^{a_{j}})$} 
 (11,1) node(111){$\Hgr(\XX_{a_{j}}^{a_{j+\epsilon}})$}
 (13,1) node(131){$\Hgr(\XX_{a_{j}+\epsilon}^{s_{j}})$}
 (12,2) node(122){$\Hgr(\XX_{a_{j}}^{s_{j}})$}
 (11,3) node(113){$\Hgr(\XX_{s_{j-1}}^{s_{j}})$}
 (10,2) node(102){$\Hgr(\XX_{s_{j-1}}^{a_{j}+\epsilon})$}

 (7,1) node(71){$\Hgr(\XX_{s_{i}}^{s_{j-1}})$} (8,2) node(82){$\Hgr(\XX^{a_j}_{s_{i}})$}  ;
\draw[->] (91) -- (102); 
\draw[->] (102) -- (113); 
\draw[->] (122) -- (113); 
\draw[->] (111) -- (122); 
\draw[->] (111) -- (102); 
\draw[->] (131) -- (122); 
\draw[->] (140) -- (131); 
\draw[->] (120) -- (131); 
\draw[->] (120) -- (111); 
\draw[->] (100) -- (111); 
\draw[->] (91) -- (82); 
\draw[->] (100) -- (91); 
\draw[->] (80) -- (91); 
\draw[->] (82) -- (73); 
\draw[->] (62) -- (73); 
\draw[->] (42) -- (33); 
\draw[->] (22) -- (33); 
\draw[->] (31) -- (22); 
\draw[->] (31) -- (42); 
\draw[->] (11) -- (22); 
\draw[->] (00) -- (11); 
\draw[->] (20) -- (11);
\draw[->] (20) -- (31);
\draw[->] (40) -- (31);
\draw[->] (60) -- (71);  
\draw[->] (80) -- (71);
\draw[->] (60) -- (51);
\draw[->] (51) -- (42);
\draw[->] (40) -- (51);

\draw[->] (51) -- (62);
\draw[->] (71) -- (62);
\draw[->] (71) -- (82);

\end{tikzpicture}
\]
Using the Diamond Principle and the Restriction Principle we calculate:
\[
\begin{array}{ll}
\langle [\Hgr(\XX_{s_{i-1}}^{s_i} ), \Hgr(\XX_{s_{j-1}}^{s_j})] \mid \Hgr\XX_{\{s_{i-1}, s_i, s_{j-1}, s_j\}} \rangle
	&= \morse{1}\\
	& = \morse{2}  \\
	& = \morse{3}  \\
	& = \morse{4}  \\
	&= \morse{5} \\
	&\\
	&= \mudu_{\Hgr\XX} ([a_i-\epsilon, a_i]\times [a_j, a_j+\epsilon]).
\end{array}
\]
 In the second line we used the fact that $\XX$ is of Morse type. This implies $\XX_{s_{i-1}}^{s_{i-1}}$ is homotopy equivalent to $\XX_{s_{i-1}}^{a_{i}-\epsilon}$, $\XX_{s_{i-1}}^{a_{i}}$ to $\XX_{s_{i-1}}^{s_{i}}$,  $\XX_{a_{j}+\epsilon}^{s_{j}}$ to $\XX_{s_{j}}^{s_{j}}$ and $\XX_{a_{j}}^{s_{j}}$ to $\XX_{s_{j-1}}^{s_{j}}$ for all sufficiently small $\epsilon$. Therefore
\[
\textrm{m}_{\Dgm^\dus(\XX)}(a_i^-, a_j^+)= \lim_{\epsilon\to 0} \mudu_{\Hgr\XX} ([a_i-\epsilon, a_i]\times [a_j, a_j+\epsilon]) = m.
\]

We must now show that $\Dgm^\dus(\XX)$ contains only points of the type $(a_i^-, a_j^+)$, where $a_i$ and $a_j$ are critical values of $\XX$. For any $p\in \RR$, an $\epsilon>0$ exists such that $\XX_p^{p+\epsilon}$ and $\XX_{p-\epsilon}^p$ strongly deformation retracts to $\XX_p^p$. This means that $\Hgr(\XX_p^{p+\epsilon}) \cong \Hgr(\XX_p^p) \cong  \Hgr(\XX_{p-\epsilon}^p)$, forcing
\[
\left\langle \critical{1}\, |\, \Hgr\XX_{\{p-\epsilon, p\}} \right\rangle  =
\left\langle \critical{4}\, |\, \Hgr\XX_{\{p, p+\epsilon\}} \right\rangle = 0.
\]
For $\epsilon$ small enough
\[
\zzud, \zzdd, \zzuu
\]
all appear with 0 multiplicity for any $p$ and $q$ in the quiver decomposition of $\Hgr\XX_{\{p-\epsilon, p, q, q+\epsilon\}}$. This holds since by the restriction principle
\[
0 \leq \left\langle \zzud\, |\, \Hgr\XX_{\{p-\epsilon, p, q, q+\epsilon\}}) \right\rangle, \left\langle \zzuu\, |\, \Hgr\XX_{\{p-\epsilon, p, q, q+\epsilon\}}) \right\rangle \leq \left\langle \critical{1}\, |\, \Hgr\XX_{\{p-\epsilon, p\}} \right\rangle=0
\]
and 
\[
0 \leq \left\langle \zzdd\, |\, \Hgr\XX_{\{p-\epsilon, p, q, q+\epsilon\}}) \right\rangle \leq \left\langle \critical{4}\, |\, \Hgr\XX_{\{q, q+\epsilon\}} \right\rangle =0.
\]
So $\Dgm^\dus(\XX)$ contains exactly points that correspond to intervals of type $[a_i, a_j]$ in $\DgmZZ(\Hgr\XX)$.

We prove the statement for other measures similarly.
\end{proof}
\bigskip

\subsection{Sixteen behaviors}
\label{sec:16}


Let $\XX$ be an $\RR$-space. Depending on the way a feature perishes and whether the corresponding interval is closed or open at endpoints, there are sixteen different cases that can occur (see ~Figure~\ref{fig:16}). 
\begin{figure}[h!]
\begin{center}
\includegraphics[scale = 1.1]{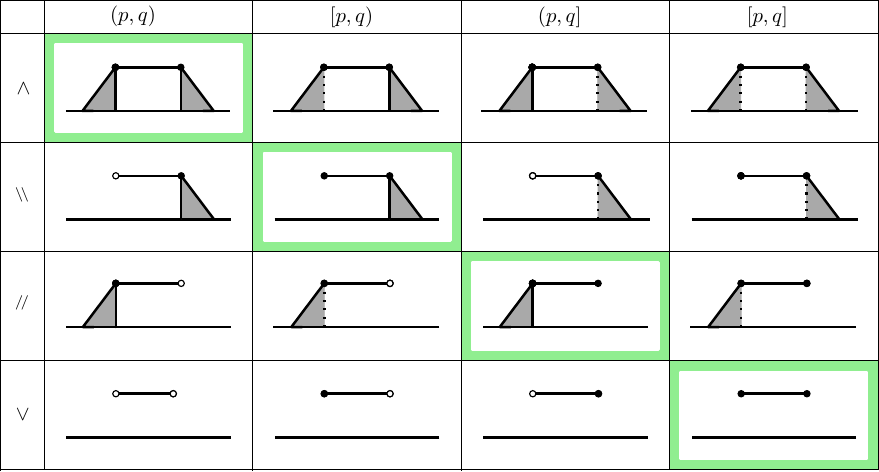}
\caption{Different ways of dying at endpoints.}
\label{fig:16}
\end{center}
\end{figure}
For a Morse type $\RR$-space ${\XX =(X, f)}$, where $X$ is compact, this number drops down to four (highlighted green in Figure~\ref{fig:16}) as demonstrated by Theorem~\ref{16to4Morse}. Something similar occurs when $X$ is a locally compact polyhedron, $f$ a proper continuous map and $\Hgr$ the Steenrod--Sitnikov homology functor. 

The following theorem, inspired by Frosini et al.~\cite{CFFFL}, relies heavily on the continuity property of \v{C}ech homology~\cite{foundations}. For a wide variety of coefficient groups (infinitely divisible; finite exponent)~\cite{Steenrod} \v{C}ech homology coincides with Steenrod--Sitnikov homology. In particular, this is the case for some of the more common fields we may be interested in: $\ff_p$, $\qq$, $\rr$.

\begin{theorem}\label{frosinithm}
Let $\XX =(X, f)$.  We assume that $X$ is a locally compact polyhedron, $f$ is a proper continuous map, and $\Hgr$ is the Steenrod--Sitnikov homology functor with coefficients in $\ff_p$, $\qq$ or $\rr$. 
Then:
\begin{alignat*}{4}
&
\Dgm^\uds\!(\Hgr\XX) && \;\;\text{contains only points of type}
&&
\raisebox{-1.4ex}{
\includegraphics[scale=1.25]{plusminus.pdf}}
\!= (p^+,q^-)
&& = (p,q)
\\
&
\Dgm^\dds\!(\Hgr\XX) && \;\;\text{contains only points of type}
&&
\raisebox{-1.4ex}{
\includegraphics[scale=1.25]{minusminus.pdf}}
\!= (p^-,q^-)
&& = [p,q)
\\
&
\Dgm^\uus\!(\Hgr\XX) && \;\;\text{contains only points of type}
&&
\raisebox{-1.4ex}{
\includegraphics[scale=1.25]{plusplus.pdf}}
\!= (p^+,q^+)
&& = (p,q]
\\
&
\Dgm^\dus\!(\Hgr\XX) && \;\;\text{contains only points of type}
&&
\raisebox{-1.4ex}{
\includegraphics[scale=1.25]{minusplus.pdf}}
\!= (p^-,q^+)
&& = [p,q]
\end{alignat*}
In other words, the four possible decorations correspond exactly to the four ways in which a feature can perish at the ends of its interval.
 \end{theorem}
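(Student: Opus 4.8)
The plan is to reduce Theorem~\ref{frosinithm} to a statement about vanishing multiplicities in short zigzag modules built from $\epsilon$-neighborhoods of a single levelset, exactly as in the Morse-type case (Theorem~\ref{16to4Morse}), and then to supply the one extra ingredient that replaces the Morse-type homotopy equivalences: the continuity property of \v{C}ech homology. Concretely, it suffices to show that for every $p \in \RR$ the measures $\muud$, $\muuu$ assign zero to arbitrarily thin vertical strips abutting $p$ on the left, and that $\mudd$ assigns zero to arbitrarily thin horizontal strips abutting $q$ on the right (and the coordinate-reversed statement for the other decoration constraints). By the Equivalence Theorem and the limit formula in the Remark after it, this is precisely the assertion that $\Dgm^\uds$ has no points of the forbidden decoration types, and similarly for the others; so the whole theorem collapses to four local vanishing statements of the shape
\[
\lim_{\epsilon \to 0} \bigl\langle \zzud \mid \Hgr\XX_{\{p-\epsilon,\, p,\, q,\, q+\epsilon\}} \bigr\rangle = 0
\]
and its companions.

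First I would set up, exactly as in the proof of Theorem~\ref{16to4Morse}, the Restriction-Principle bounds
\[
0 \leq \bigl\langle \zzud \mid \Hgr\XX_{\{p-\epsilon, p, q, q+\epsilon\}} \bigr\rangle,\ \bigl\langle \zzuu \mid \Hgr\XX_{\{p-\epsilon, p, q, q+\epsilon\}} \bigr\rangle \leq \bigl\langle \critical{1} \mid \Hgr\XX_{\{p-\epsilon, p\}} \bigr\rangle
\]
and
\[
0 \leq \bigl\langle \zzdd \mid \Hgr\XX_{\{p-\epsilon, p, q, q+\epsilon\}} \bigr\rangle \leq \bigl\langle \critical{4} \mid \Hgr\XX_{\{q, q+\epsilon\}} \bigr\rangle,
\]
so that everything reduces to controlling $\bigl\langle \critical{1} \mid \Hgr\XX_{\{p-\epsilon,p\}}\bigr\rangle$, i.e. the rank of the cokernel-type summand of the backward map $\Hgr(\XX_{p-\epsilon}^{p-\epsilon}) \leftarrow \Hgr(\XX_{p-\epsilon}^{p})$ — equivalently, whether the inclusion $\XX_{p-\epsilon}^{p-\epsilon} \hookrightarrow \XX_{p-\epsilon}^{p}$ becomes a homology isomorphism for small $\epsilon$. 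In the Morse-type setting this was automatic from the product structure; here I would instead argue that $\XX_p^p = \bigcap_{\epsilon>0} \XX_{p-\epsilon}^{p}$ (an intersection of a nested family of compact sets, using properness of $f$ and local compactness of $X$ to keep everything inside a compact slice), so that by the continuity axiom for \v{C}ech homology, $\Hgr(\XX_p^p) \cong \varprojlim_\epsilon \Hgr(\XX_{p-\epsilon}^p)$, and likewise $\XX_p^p = \bigcap_\epsilon \XX_{p-\epsilon}^{p-\epsilon}$ gives $\Hgr(\XX_p^p) \cong \varprojlim_\epsilon \Hgr(\XX_{p-\epsilon}^{p-\epsilon})$. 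Since the relevant coefficient fields $\ff_p, \qq, \rr$ are among those for which \v{C}ech and Steenrod--Sitnikov homology agree, these identifications are available for our functor $\Hgr$. A Mittag-Leffler / compatibility argument on the two inverse systems — both converging to $\Hgr(\XX_p^p)$ and linked by the inclusion-induced maps — then forces the backward map $\Hgr(\XX_{p-\epsilon}^{p-\epsilon}) \leftarrow \Hgr(\XX_{p-\epsilon}^p)$ to become an isomorphism for $\epsilon$ small, killing the $\critical{1}$-multiplicity in the limit; the coordinate-reversal symmetry (Proposition before Section~\ref{sec:tautness}) then handles $\critical{4}$ and the remaining decorations.

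The main obstacle I anticipate is the passage from ``the two inverse systems both have limit $\Hgr(\XX_p^p)$'' to ``the connecting maps are eventually isomorphisms''. Inverse limits are not exact in general, and the multiplicities $\bigl\langle \critical{1} \mid \Hgr\XX_{\{p-\epsilon,p\}}\bigr\rangle$ need only be \emph{monotone nonincreasing} in $\epsilon$ with integer values, so they stabilize; the real content is showing the stable value is $0$ rather than some positive integer. The cleanest route is to observe that the composite $\Hgr(\XX_{p-\epsilon}^{p-\epsilon}) \to \Hgr(\XX_{p-\epsilon}^{p}) \to \Hgr(\XX_{p-\epsilon'}^{p-\epsilon'})$ for $\epsilon' < \epsilon$ factors the structure map of the second (level-set) inverse system, whose limit map to $\Hgr(\XX_p^p)$ is an isomorphism; chasing this, together with the fact that $\Hgr(\XX_{p-\epsilon}^p) \to \Hgr(\XX_p^p)$ is an isomorphism onto the limit and the ranks are finite (Lemma~\ref{bendichprop} and Proposition~\ref{prop:finite1}(i) bound the relevant summands), pins the stable rank at $0$. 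This finite-dimensionality of the pieces in play — guaranteed under the hypotheses of the theorem by Proposition~\ref{prop:finite1}(i) — is what makes the Mittag-Leffler argument go through cleanly, and I would foreground it early in the write-up.
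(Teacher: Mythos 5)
There is a genuine gap, and it sits exactly at the step you flag as the ``main obstacle.'' Your reduction bounds the forbidden multiplicities by two-level quantities such as $\langle \critical{1} \mid \Hgr\XX_{\{p-\epsilon,p\}}\rangle = \dim \Ker\big(\Hgr(\XX_p^p) \to \Hgr(\XX_{p-\epsilon}^{p})\big)$, and your plan is to show these tend to $0$ using continuity of \v{C}ech homology plus finite-dimensionality. But under the hypotheses of the theorem these quantities need not be finite for \emph{any} $\epsilon$: Proposition~\ref{prop:finite1}(i) only controls rectangle measures $\mu^\xxs([a,b]\times[c,d])$ with $b<c$ strictly separated, not quantities attached to two adjacent levels. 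Concretely, take $X=[0,1]$, $C$ the Cantor set, $f = -\metric(\cdot,C)$, $p=0$: then $\XX_0^0 = C$ has infinite-dimensional $\check{\Hgr}_0$, while $\XX_{-\epsilon}^{0}$ is a finite union of intervals, so $\dim\Ker\big(\Hgr_0(\XX_0^0)\to\Hgr_0(\XX_{-\epsilon}^{0})\big)=\infty$ for every $\epsilon>0$ even though $\bigcap_\epsilon \Ker = 0$. A nested family of infinite-dimensional kernels can have trivial intersection without the dimensions stabilizing at $0$, so the bound is vacuous. (Two further slips: the identity $\XX_p^p = \bigcap_\epsilon \XX_{p-\epsilon}^{p-\epsilon}$ is false --- distinct levelsets are disjoint, so there is no second inverse system of levelsets to compare --- and the map $\Hgr(\XX_{p-\epsilon}^{p-\epsilon})\to\Hgr(\XX_{p-\epsilon}^{p})$ need not become an isomorphism for small $\epsilon$, e.g.\ near a local minimum of $f$; fortunately only an image/kernel containment, not an isomorphism, is what the multiplicity actually measures.)

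The missing idea, which is the heart of the paper's proof, is to manufacture finite-dimensionality before taking the limit: fix a piecewise-linear approximation $g$ of $f$ and insert the finite simplicial complex $Y=g^{-1}(m)$ with $m$ strictly \emph{between} the two coordinates of the rectangle (e.g.\ $p+3\epsilon_1 < m < c-3\epsilon_1$). The forbidden multiplicities are then bounded, via the enlarged diagram $\Hgr\XX^B$ with the relative groups $\Hgr(V_p,\XX_p^{p+\epsilon})$ and the spaces $U^q$, by differences of dimensions of kernels of maps \emph{out of} $\Hgr(Y)$, or by $\dim\big[\Ker \Hgr(U^q\to U^{q+\epsilon_i})\cap \Img \Hgr(Y\to U^q)\big]$ --- all finite-dimensional because $\Hgr(Y)$ is. Only then does Lemma~\ref{kernel} (the continuity statement you correctly identify) do its job: a nested family of finite-dimensional spaces whose intersection is the limit kernel must have stabilizing dimension, so the bounding terms tend to $0$. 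Your write-up correctly identifies \v{C}ech continuity as the new ingredient and correctly reduces the theorem to four local vanishing statements, but without routing the estimate through a finite complex sitting strictly between the two sides of the rectangle, the limit argument cannot be made to close.
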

 
Let $a<b<m<c<d$. We fix a piecewise-linear structure on $X$, and approximate $f \colon X \to \RR$ with a piecewise-linear map $g \colon X \to \RR$ for which $||g - f||\leq \min \{\frac{c-m}{2}, \frac{m-b}{2}\}$.  The preimage $Y=g^{-1}(m)$ is a finite simplicial complex. Let 
\[
{V_q = g^{-1}((\infty, m])} \cap \XX_q\,\quad\textrm{and}\quad  {U^q = g^{-1}([m, \infty))} \cap \XX^q\, \textrm{ for }q\in \RR.
\]
%
%
In the proof of Theorem~\ref{frosinithm} we will make use of diagrams of this type:
\[
\begin{tikzpicture}[xscale=1.2,yscale=1.2]
\draw (-2,2) node(022) {$\Hgr\XX_{\{a, b, c, d\}}^B\colon$} ;
\draw (1,1) node(11) {$\Hgr(\XX_{a}^{b})$} 
(3,1) node(31) {$\Hgr(V_{b})$}
(5,1) node(51) {$\Hgr(U^{c})$} ;
\draw (4,2) node(42) {$\Hgr(\XX_{b}^c)$} ;
\draw (2,2) node(22){$\Hgr(V_{a})$}
(-1,1) node(011){$0$}
(-1,3) node(013){$0$}
(1,3) node(13){$\Hgr(V_{a}, \XX_{a}^{a})$}
(0,4) node(04){$\Hgr(V_{a}, \XX_{a}^{b})$}
(0,2) node(02){$\Hgr(\XX_{a}^{b}, \XX_{a}^{a})$}; 
\draw (0,0) node(00){$\Hgr(\XX_{a}^{a})$} (2,0) node(20){$\Hgr(\XX_{b}^{b})$} (4,0) node(40){$\Hgr(Y)$} (6,0) node(60){$\Hgr(\XX_{c}^{c})$}  (6,2) node(62){$\Hgr(U^{d})$} (8,0) node(80){$\Hgr(\XX_{d}^{d})$} (7,1) node(71){$\Hgr(\XX_{c}^{d})$} (8,2) node(82){$\Hgr(\XX^d_c, \XX_{d}^{d})$}   (9,1) node(91){$0$}   (7,3) node(73){$\Hgr(U^d, \XX_{d}^{d})$}  (9,3) node(93){$0$} (8,4) node(84){$\Hgr(U^d, \XX_{c}^{d})$} ;
\draw[->] (31) -- (22); 
\draw[->] (31) -- (42); 
\draw[->] (11) -- (22); 
\draw[->] (00) -- (11); 
\draw[->] (20) -- (11);
\draw[->] (20) -- (31);
\draw[->] (40) -- (31);
\draw[->] (011) -- (02);
\draw[->] (11) -- (02);  
\draw[->] (00) -- (011);
\draw[->] (60) -- (71);  
\draw[->] (80) -- (71);
\draw[->] (60) -- (51);
\draw[->] (51) -- (42);
\draw[->] (02) -- (013);
\draw[->] (40) -- (51);

\draw[->] (02) -- (13);
\draw[->] (22) -- (13);

\draw[->] (013) -- (04);
\draw[->] (13) -- (04);

\draw[->] (51) -- (62);
\draw[->] (71) -- (62);
\draw[->] (71) -- (82);

\draw[->] (82) -- (73);
\draw[->] (62) -- (73);

\draw[->] (80) -- (91);
\draw[->] (91) -- (82);
\draw[->] (82) -- (93);
\draw[->] (93) -- (84);
\draw[->] (73) -- (84);
\end{tikzpicture}
\]
Additionally, we will need the following lemma:
\begin{lemma}\label{kernel}
Let $X$ be a compact subspace of a compact space $Z$, $Y$ a finite simplicial complex contained in $X$ and $X_i$ a countable nested family of compact spaces such that ${\cap_i X_i = X}$. Let $\Hgr$ be a \v{C}ech homology functor with coefficients in a field.
In diagrams
\[
\begin{array}{ccc}
\raisebox{-1.45ex}{\begin{tikzpicture}[xscale=2.3,yscale=1]
\draw (1.15,1) node(11){$\Hgr(X)$} (2,1) node(20){$\Hgr(X_i)$} ;
\draw[->] (11)  to node[above]{$j_i$} (20);
\end{tikzpicture}}
&
 \textrm{and}
 &
\raisebox{-1.45ex}{\begin{tikzpicture}[xscale=2.8,yscale=1]
\draw (0.4,1) node(00) {$\Hgr(Y)$};
\draw (1.15,1) node(11){$\Hgr(Z, X)$} (2,1) node(20){$\Hgr(Z, X_i)$} ;
\draw[->] (00) to node[above]{$q_Y$} (11); 
\draw[->] (11)  to node[above]{$q_i$} (20);
\end{tikzpicture}}  
\end{array}
\]
maps $j_i$, $q_Y$ and $q_i$ are induced by inclusions. The following equalities hold:
\[
\quad
\cap_{i} \Ker j_i = 0 \quad \textrm{and}\quad \Ker q_Y = \cap_{i} \Ker q_i \circ q_Y.
\]
\end{lemma}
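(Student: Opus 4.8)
The plan is to deduce both identities from a single input --- the continuity property of \v{C}ech homology --- applied first to the nested sequence of compact spaces $X_1 \supseteq X_2 \supseteq \cdots$ and then to the induced nested sequence of compact pairs $(Z,X_1) \supseteq (Z,X_2) \supseteq \cdots$. First I would record the elementary fact that, because the $X_i$ are compact and nested with $\bigcap_i X_i = X$, one has $X = \varprojlim_i X_i$ as an inverse limit of compact Hausdorff spaces (the bonding maps being the inclusions $X_{i+1}\hookrightarrow X_i$), and likewise $(Z,X) = \varprojlim_i (Z,X_i)$ as an inverse limit of compact Hausdorff pairs. In both systems the maps $j_i$ and $q_i$ of the statement are precisely the canonical maps out of the limit.

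Next I would invoke continuity: for \v{C}ech homology the canonical comparison maps
\[
\Hgr(X) \longrightarrow \varprojlim_i \Hgr(X_i)
\qquad\text{and}\qquad
\Hgr(Z, X) \longrightarrow \varprojlim_i \Hgr(Z, X_i)
\]
are isomorphisms, with components $j_i$ and $q_i$ respectively. (In fact only the injectivity of these two maps is needed below.) This is where the standing hypotheses are spent: \v{C}ech homology satisfies continuity on inverse limits of compact pairs, and for the coefficient fields $\ff_p$, $\qq$, $\rr$ it moreover agrees with Steenrod--Sitnikov homology, so that the long exact sequences invoked elsewhere in the argument remain available.

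With this in hand the two equalities are formal. The first is immediate: the kernel of $\Hgr(X) \to \varprojlim_i \Hgr(X_i)$ is exactly $\bigcap_i \Ker j_i$, and this kernel is $0$ by injectivity. For the second, the space $Y$ and the map $q_Y$ are merely a bookkeeping device: continuity for the pairs gives $\bigcap_i \Ker q_i = 0$ inside $\Hgr(Z, X)$, hence
\[
\bigcap_i \Ker(q_i\circ q_Y) \;=\; q_Y^{-1}\!\Bigl(\bigcap_i \Ker q_i\Bigr) \;=\; q_Y^{-1}(0) \;=\; \Ker q_Y .
\]

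The only genuine obstacle is checking that the continuity axiom really applies as stated: one must confirm that $\{X_i\}$ and $\{(Z,X_i)\}$ are inverse systems of compact Hausdorff (pairs of) spaces over a directed index set with the claimed limits, and that the coefficient field is one for which \v{C}ech homology is continuous in the strong, isomorphism sense (equivalently, agrees with Steenrod--Sitnikov homology). In the setting of Theorem~\ref{frosinithm} all spaces in sight are closed subpolyhedra of a compact polyhedron, so compactness and the Hausdorff property are automatic, and the restriction to $\ff_p$, $\qq$, $\rr$ is exactly what secures the form of continuity we need; the rest is routine diagram bookkeeping.
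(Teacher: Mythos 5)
Your proposal is correct and follows essentially the same route as the paper: both arguments rest on the continuity of \v{C}ech homology for the inverse systems $\{X_i\}$ and $\{(Z,X_i)\}$, and then identify $\bigcap_i\Ker j_i$ (resp.\ $\bigcap_i \Ker(q_i\circ q_Y)$) with the kernel of the comparison map into the inverse limit (resp.\ its preimage under $q_Y$); the paper phrases this via ``$\varprojlim$ preserves kernels'' where you compute the intersections directly, which is if anything cleaner. One small remark: continuity of \v{C}ech homology on compact Hausdorff pairs holds for arbitrary field coefficients, so the restriction to $\ff_p$, $\qq$, $\rr$ is not what secures this lemma --- it is needed elsewhere in Theorem~\ref{frosinithm} to guarantee agreement with Steenrod--Sitnikov homology (hence exactness), a point your closing paragraph slightly conflates.
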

\begin{proof}
By continuity of \v{C}ech homology~\cite{foundations}
\[
\varprojlim \Hgr(Z, X_i) = \Hgr \varprojlim (Z, X_i) = \Hgr (Z, X).
\]
The map
\[
\begin{tikzpicture}[xscale=1.8,yscale=1]
\draw  (2,1) node(21) {$\id_{\Hgr (Z, X)} \colon \varprojlim \Hgr(Z, X_i)$} (3.95,1) node(31){$\Hgr(Z, X)$};
\draw[->] (21) -- (31); 
\end{tikzpicture}
\]
satisfies the compatibility conditions for inverse limits and by the universal property equals $\varprojlim q_i$. Similarly, $\varprojlim j_i = \id_{\Hgr(X)}$.

Since the inverse limit functor preserves kernels,
\[
\varprojlim \Ker j_i = \Ker \varprojlim  j_i = \Ker \id_{\Hgr(X)} = 0
\]
and
\[
\varprojlim \Ker q_i\circ q_Y =  \Ker \varprojlim (q_i\circ q_Y) =   \Ker \varprojlim q_i \circ \varprojlim q_Y =   \Ker \id_{\Hgr (Z, X)} \circ q_Y = \Ker q_Y.
\]
The statement follows since the inverse limit of a nested sequence of vector spaces is precisely their intersection. An identical argument proves the second statement.
\end{proof}

\begin{proof}[Proof of Theorem~\ref{frosinithm}]
Let $(p, q) \in \RR^2$ be such that $p < q < \infty$.

First we show that $(p^+, q^*)$ appears with multiplicity 0 in $\Dgm^\dus(\XX)$ and $\Dgm^\dds(\XX)$. It suffices to prove that
\[
\lim_{\epsilon \to 0} \mudu_{\XX}([p, p+\epsilon] \times [c, d]) = 0\quad \textrm{and}\quad \lim_{\epsilon \to 0} \mudd_{\XX}([p, p+\epsilon] \times [c, d]) = 0.
\]

Let $m$ and a descending sequence of positive numbers $\epsilon_1 \geq \epsilon_2 \geq \ldots \geq 0$ be such that ${\displaystyle \lim_{i \to \infty} \epsilon_i = 0}$ and $p+3\epsilon_1 < m < c-3\epsilon_1$. Then
 \[
 \mudu_{\XX}([p, p+\epsilon] \times [c, d]) = \left\langle \bat{1}\,|\, \Hgr\XX_{\{p, p+\epsilon, c, d\}}^B \right\rangle
 \]
 and
  \[
  \mudd_{\XX}([p, p+\epsilon] \times [c, d])=\left\langle \bat{14}\,|\, \Hgr\XX_{\{p, p+\epsilon, c, d\}}^B \right\rangle.
\]

Using the Mayer--Vietoris and the restriction principles, we bound $ \mudu_{\XX}([p, p+\epsilon] \times [c, d])$ and $ \mudd_{\XX}([p, p+\epsilon] \times [c, d])$:
\[
\begin{array}{lclcl}
\left\langle \bat{1} \right\rangle &=& \left\langle \bat{5} \right\rangle & \leq &\left\langle \bat{6} \right\rangle  \\
&\leq &\left\langle \bat{8} \right\rangle &= & \left\langle \bat{10} \right\rangle \\
& = & \left\langle \bat{11} \right\rangle .&  &\\
\end{array}
\]
Similarly, 
\[
\left\langle \bat{14 } \right\rangle \leq \left\langle \bat{11} \right\rangle.
\]
By the restriction principle
\[
\begin{array}{lcl}
\dim \Ker \Hgr(Y \to (V_p, \XX_p^{p+\epsilon_i})) &=&  \left\langle \bat{12} \right\rangle \\
&=& \left\langle \bat{11} \right\rangle+ \left\langle \bat{12} \right\rangle\\
&=& \left\langle \bat{11} \right\rangle + \dim \Ker \Hgr(Y \to (V_p, \XX_p^p)). \\
\end{array}
\]
By Lemma~\ref{kernel}
\[
\cap_i \Ker \Hgr(Y \to (V_p, \XX_p^{p+\epsilon_i})) =  \Ker \Hgr(Y \to (V_p, \XX_p^p)).
\]
Since $\Ker \Hgr(Y \to (V_p, \XX_p^{p+\epsilon_i}))$ and $\Ker \Hgr(Y \to (V_p, \XX_p^p))$ are all finite dimensional, 
\[
\dim \Ker \Hgr(Y \to (V_p, \XX_p^p)) = \lim_{i\to \infty} \dim \Ker \Hgr(Y \to (V_p, \XX_p^{p+\epsilon_i})).
\]
This implies that 
\[
\lim_{i\to \infty} \left\langle \bat{11} \right\rangle = 0.
\]
For all $i$
\[
0 \leq \mudu_{\XX}([p, p+\epsilon_i] \times [c, d]), \mudd_{\XX}([p, p+\epsilon_i] \times [c, d]) \leq \left\langle \bat{11} \right\rangle.
\]
As we let $i \to \infty$, the desired statement follows.

By symmetry $(p^*, q^-)$ appears with multiplicity 0 in $\Dgm^\dus(\XX)$ and $\Dgm^\uus(\XX)$.

Next we prove that $(p^*, q^+)$ appears with multiplicity 0 in $\Dgm^\dds(\XX)$ and $\Dgm^\uds(\XX)$, ie.
\[
\lim_{\epsilon \to 0} \mudd_{\XX}([a, b] \times [q, q+\epsilon]) = 0 \quad \textrm{and}\quad \lim_{\epsilon \to 0} \muud_{\XX}([a, b] \times [q, q+\epsilon]) = 0.
\]
Let $m$ and a descending sequence of positive numbers $\epsilon_1 \geq \epsilon_2 \geq \ldots \geq 0$ be such that ${\displaystyle \lim_{i \to \infty} \epsilon_i = 0}$ and $b+3\epsilon_1 < m < q-3\epsilon_1$. 
 Since all the diamonds are Mayer--Vietoris
\[
\left\langle \bat{4} \right\rangle =  \left\langle \bat{2} \right\rangle \leq  \left\langle \bat{3} \right\rangle.
\]
Note that 
\[
 \left\langle \bat{3} \right\rangle
 =
 \dim
 \left[
 \Ker \Hgr(U^q \to U^{q+\epsilon_i}) \cap \Img \Hgr(Y \to U^{q}).
 \right]
 \]
Vector spaces $\Ker \Hgr(U^q \to U^{q+\epsilon_i})  \cap \Img \Hgr(Y \to U^{q})$ are finite dimensional subspaces of $\Ker \Hgr(U^q \to U^{q+\epsilon_i})$ ($Y$ is a finite simplicial complex and therefore has finitely generated homology groups).
By Lemma~\ref{kernel} (it applies since Steenrod--Sitnikov and \v{C}ech homology coincide for a certain choice of coefficients)
\[
\cap_{i} \Ker \Hgr(U^q \to U^{q+\epsilon_i}) = 0.
\]
Consequently,
\[
\lim_{i\to\infty} \left\langle \bat{3} \right\rangle = \lim_{i\to\infty} \dim \Ker \Hgr(U^q \to U^{q+\epsilon_i})  \cap \Img \Hgr(Y \to U^{q}) = 0.
\]
Since
\[
0 \leq  \mudd_{\XX}([a, b] \times [q, q+\epsilon_i]) \leq \left\langle \bat{3} \right\rangle,
\]
${\displaystyle \lim_{i\to\infty} \mudd_{\XX}([a, b] \times [q, q+\epsilon_i]) = 0}$ and consequently $(p^-, q^*)$ appears with multiplicity 0 in the diagram determined by $\mudd_{\XX}$. If we bound $\mudd_{\XX}([a, b] \times [q, q+\epsilon])$ by the same term, we also get $\displaystyle \lim_{\epsilon \to 0} \muud_{\XX}([a, b] \times [q, q+\epsilon]) = 0$.

By symmetry $(p^-, q^*)$ appears with multiplicity 0 in $\Dgm^\uus(\XX)$ and $\Dgm^\uds(\XX)$. The statement follows.


 
\end{proof}
\begin{remark}
The statement of Theorem~\ref{frosinithm} can be strengthened to include $\RR$-spaces $(X, f)$, where:
\begin{itemize}
\item
$X$ is a Euclidean neighborhood retract and $f$ is a proper continuous map (see~\cite{Landi_2010}). This works because such an $f$ can be approximated with a continuous $g$ whose slices and levelsets are retracts of finite simplicial complexes and therefore have finitely generated homology groups.
\item
$X$ is a compact ANR and $f$ is a continuous function (see~\cite{Burghelea_Haller_2013, Burghelea_Dey_2013, Burghelea_2015}). Any $f$ can be approximated by a continuous map $g$ whose slices and levelsets are compact ANR. Compact ANR's have finitely generated homology groups~\cite{compactANR}.
\end{itemize}
 \end{remark}

\subsection{Stability}
\label{subsec:stability}

Given an $\RR$-space $\XX = (X,f)$ with a well-defined parametrized homology, what is the effect on the persistence diagrams of a small perturbation of the function? Will the resulting diagram be `close' to the original? We can measure this in terms of the \emph{bottleneck distance}, a standard and widely used metric on persistence diagrams~\cite{Cohen-Steiner_2007}.

The bottleneck distance compares undecorated diagrams. Let $A,B$ be locally finite multisets defined in open sets $\mathcal{F}_A, \mathcal{F}_B$ in the extended plane $\overline\RR^2$.
Consider a partial bijection $\approx$ between $A$ and $B$. The `cost' of a partial bijection is defined
\[
\cost(\approx) = \sup \left\{
\begin{array}{ll}
\metric^\infty ((p,q),(r,s)) 
	& \textrm{matched pairs } (p,q) \approx (r, s)
\\
\metric^\infty((p, q), \overline{\RR}^2 - \mathcal{F}_B)
	& \textrm{if }(p, q) \in A \textrm{ is unmatched}
\\
\metric^\infty((r, s), \overline{\RR}^2 - \mathcal{F}_A)
	& \textrm{if }(r, s) \in B\textrm{ is unmatched}
\end{array}\right.
\]
and the bottleneck distance is then
\[
\bottle({A}, {B}) =
\inf
\left\{
\cost(\approx)
\mid
\text{$\approx$ is a partial bijection between $A$ and~$B$}
\right\}
\]
One can show using a compactness argument that the infimum is attained{\cite[Theorem 5.12]{Thestructureandstability}}.
In the definition we are using the $l^\infty$-metric in the extended plane,
\[
\metric^\infty ((p, q), (r, s)) = \max \{|p-r|, |q-s| \}
\]
with $|(+\infty) - (+\infty)| = |(-\infty) - (-\infty)| = 0$.
The distance to a subset is defined in the usual way. Note that the distance to $\overline{\RR}^2 - \Hp$ is equal to the distance to the diagonal, that being the more familiar formulation.

We reach our stability theorem for parametrized homology (Theorem~\ref{thm:stability}) by using a stability theorem from~\cite{Thestructureandstability} for diagrams of r-measures. There is a natural way to compare two r-measures. For $R = [a,b] \times [c,d]$ define the $\delta$-thickening $R^\delta = [a-\delta, b+\delta] \times [c-\delta, d+\delta]$. (For infinite rectangles, we use $-\infty-\delta = -\infty$ and $+ \infty + \delta = + \infty$.)
We say that two r-measures satisfy the \emph{box inequalities with parameter~$\delta$} if
\[
\mu(R) \leq \nu(R^\delta),
\quad
\nu(R) \leq \mu(R^\delta)
\]
for all $R$. Either inequality is deemed to be vacuously satisfied if $R^\delta$ exceeds the finite support of the measure on the right-hand side.

It is natural to hope that two measures $\mu, \nu$ which satisfy the box inequalities with parameter~$\delta$ will determine diagrams with bottleneck distance bounded by~$\delta$. This is unfortunately not true, and in fact there is no universal bound on the bottleneck distance between the two diagrams. However, with stronger assumptions, namely the existence of a 1-parameter family interpolating between $\mu$ and $\nu$, such a statement holds.

\begin{theorem}
[Stability for finite measures~{\cite[Theorem 5.29]{Thestructureandstability}}]
\label{stabilityfin}
Suppose $(\mu_t\, |\, t \in [0, \delta])$ is a 1-parameter family of finite r-measures on $\Hp$. Suppose for all $s, t\in [0, \delta]$ the box inequality
\[
\mu_s(R) \leq \mu_t(R^{|s-t|})
\]
holds for all~$R$. Then there exists a $\delta$-matching between $\dgm(\mu_0)$ and  $\dgm(\mu_\delta)$.
\qed
\end{theorem}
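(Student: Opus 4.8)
The plan is to reconstruct the proof of~\cite{Thestructureandstability}. Its backbone is the observation that the box inequality forces diagram points to move in a $1$-Lipschitz fashion as $t$ varies, so that a matching between $\dgm(\mu_0)$ and $\dgm(\mu_\delta)$ can be obtained by following each point along the family.

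Concretely, for $s,t\in[0,\delta]$ and any point $x$ of $\dgm(\mu_s)$, apply the box inequality $\mu_s(Q)\le\mu_t(Q^{|s-t|})$ to rectangles $Q$ shrinking onto $x$; by the multiplicity formula of the Equivalence Theorem, together with local finiteness (which holds because $\mu_s$ is finite), this shows that $\dgm(\mu_t)$ contains, counted with multiplicity, at least $\mathrm{m}_{\dgm(\mu_s)}(x)$ points in the closed $l^\infty$-ball of radius $|s-t|$ about $x$, and symmetrically with $s$ and $t$ exchanged. In particular the space--time support $\Sigma=\{(x,t):x\in\dgm(\mu_t)\}$ is closed, and each slice is locally finite with at most $\mu_0(R^{\delta})$ points on any fixed rectangle $R$.

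I would then assemble the matching as follows. It suffices to produce, for every $\eta>\delta$, an $\eta$-matching, since the bottleneck infimum is attained and letting $\eta\downarrow\delta$ then yields a genuine $\delta$-matching. By the defect form of Hall's theorem for locally finite bipartite graphs it is enough to verify Hall's condition for the graph joining points of $\dgm(\mu_0)$ and $\dgm(\mu_\delta)$ at $l^\infty$-distance $\le\eta$, with a point joined also to $\overline{\RR}^2-\Hp$ when within $\eta$ of it. To verify it on a finite $A\subseteq\dgm(\mu_0)$, restrict to a rectangle $R_0$ containing $A$ together with its $\eta$-neighborhood, where all the diagrams have at most $M:=\mu_0(R_0^{2\delta})$ points; subdivide $0=t_0<\dots<t_N=\delta$ finely enough that on $R_0^\delta$ each step beats the mutual separation of the finitely many distinct diagram points that are encountered, match $\dgm(\mu_{t_k})\cap R_0$ into $\dgm(\mu_{t_{k+1}})$ using the ball estimate above, and compose; this connects $A$ to at least $|A|$ points of $\dgm(\mu_\delta)$ within distance $\sum_k(t_{k+1}-t_k)=\delta<\eta$. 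Closedness of $\Sigma$ and compactness of $[0,\delta]$ make the choice of $N$ possible.

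The main obstacle is precisely this ``fine enough subdivision'' step, and it is also the place where the hypothesis that we have a whole family --- rather than merely the box inequality between $\mu_0$ and $\mu_\delta$, which by itself gives no bound on the bottleneck distance --- is indispensable: the gain comes only from the combinatorial rigidity the box inequality acquires at small scale. The delicate point is that two distinct diagram points may approach and merge (or split) at an isolated value of $t$, where their separation degenerates and no uniform step size controls them individually; there one must follow clusters of points together, or bound the extra cost by the diameter of the cluster, and an analogous care is needed for points near the diagonal and at infinity. Organising this bookkeeping into one consistent global matching is the substance of the argument.
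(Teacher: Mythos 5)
The paper does not prove this theorem: the statement is closed with a tombstone and attributed to~\cite{Thestructureandstability}, so there is no in-paper argument to compare against; the reader is directed to the cited source.

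Judged on its own terms, your reconstruction captures the right skeleton: the ball estimate obtained from the multiplicity formula of the Equivalence Theorem; the reduction to producing $\eta$-matchings for every $\eta > \delta$ using the fact that the bottleneck infimum is attained; and the appeal to a Hall-type theorem for locally finite bipartite graphs with the boundary of $\Hp$ acting as a sink of infinite capacity. But, as you acknowledge yourself, the sketch stops short at the subdivision step: ``subdivide finely enough that each step beats the mutual separation'' is not achievable when distinct tracks in the support $\Sigma$ merge, split, or approach the diagonal at isolated parameter values, and closedness of $\Sigma$ together with compactness of $[0,\delta]$ do not rescue this --- a compact family of tracks can still have vanishing pairwise separation. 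Verifying Hall's condition for an arbitrary finite $A$ despite these degeneracies, rather than by following individual points through the family, is precisely the content of the argument in the cited reference. You name the difficulty accurately and candidly but do not resolve it, so what you have written is a correct account of the strategy rather than a proof.
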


We now apply this to the situation at hand.

\begin{lemma}[Box lemma]
\label{lem:box}
Let $\XX = (X, f)$, $\mathbb{Y} = (X, g)$ be $\RR$-spaces with $\Hgr$-taut fibers on the same total space~$X$.
Write $\mu^\xxs = \mu^\xxs_{\Hgr\XX}$ and $\nu^\xxs = \mu^\xxs_{\Hgr\YY}$ for $\xxt = \udt, \ddt, \uut, \dut$. Then
\[
\mu^\xxs (R)\leq \nu^\xxs (R^{\delta})
\quad \textrm{and}\quad
\nu^\xxs (R)\leq \mu^\xxs (R^{\delta})
\]
for any $\delta > \| f - g \|$.
\end{lemma}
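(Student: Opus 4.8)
The plan is to prove the first inequality $\mu^\xxs(R) \le \nu^\xxs(R^\delta)$ for a single rectangle $R = [a,b]\times[c,d]$ and a single choice of the symbol $\xxt$; the second inequality is identical with the roles of $\XX$ and $\YY$ swapped, and the other three behaviors are handled by the same argument (or reduced to it via the coordinate-reversal symmetry of Proposition~\ref{prop:taut-ex}'s neighbouring proposition). Write $\delta > \|f-g\|$ and set $R^\delta = [a-\delta,b+\delta]\times[c-\delta,d+\delta]$, with endpoints $a' = a-\delta < b' = b+\delta < c' = c-\delta < d' = d+\delta$ (the hypothesis $a<b<c<d$ together with $\delta>\|f-g\|$ forces these strict inequalities once $R$ is in $\Rect(\Hp)$; if $R^\delta$ exceeds the finite support the inequality is vacuous). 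The key geometric observation is the inclusion of fibers: since $\|f-g\|<\delta$, we have $\YY_{a'}^{b'} = g^{-1}[a-\delta,b+\delta] \supseteq f^{-1}[a,b] = \XX_a^b$, and likewise $\YY_{b'}^{c'} \supseteq \XX_b^c$ and $\YY_{c'}^{d'} \supseteq \XX_c^d$; moreover each $f$-levelset $\XX_t^t = f^{-1}(t)$ sits inside the corresponding $g$-slice $\YY_{t-\delta}^{t+\delta}$, and the $g$-levelset $\YY_{b'}^{b'}$, $\YY_{c'}^{c'}$ etc.\ similarly sit inside small $f$-slices.

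The strategy is then to build one large commuting diagram of spaces and inclusion maps that contains \emph{both} the $7$-term zigzag $\Hgr\XX_{\{a,b,c,d\}}$ computing $\mu^\xxs(R)$ \emph{and} the $7$-term zigzag $\Hgr\YY_{\{a',b',c',d'\}}$ computing $\nu^\xxs(R^\delta)$, and to relate the relevant interval multiplicities by a sequence of applications of the Restriction Principle (Theorem~\ref{RP}) and the Diamond Principle (Theorem~\ref{DP}). Concretely: starting from the $\YY$-zigzag, one refines it by inserting the spaces $\XX_a^a$, $\XX_b^c$ (or whichever fibers/slices are needed), using that the refinements are genuine sub- or super-spaces, until one reaches a long zigzag in which the $\XX$-zigzag appears as a restriction. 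The Mayer--Vietoris diamonds that arise (e.g.\ comparing $\XX_a^b \cup \YY_{a'}^{b'}$-type unions, or pushouts along the $g$-levelsets) are exact precisely because the relevant fibers are $\Hgr$-taut — this is exactly what the hypothesis ``$\Hgr$-taut fibers on the same total space $X$'' is there to supply — so the Diamond Principle applies at each step. At the undecorated, interval-multiplicity level one then reads off that the summand of type $\xxt$ supported over $[b,c]$ inside the $\XX$-zigzag contributes to a summand of the $\YY$-zigzag whose support, after the index translation, is an interval contained in $(a-\delta, d+\delta)$ meeting $[b+\delta, c-\delta]$ — i.e.\ exactly the kind of summand counted by $\nu^\xxs(R^\delta)$ — giving $\mu^\xxs(R) \le \nu^\xxs(R^\delta)$ after summing.

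The main obstacle I anticipate is bookkeeping: arranging the combined diagram so that every intermediate diamond is genuinely a Mayer--Vietoris pushout of the form $A \cap B = \text{fiber}$, $A \cup B = \text{slice}$ with $A,B$ a sub-/super-levelset decomposition, and verifying that the interior-covering condition needed for exactness (as discussed in Section~\ref{sec:tautness}, guaranteed here by $\Hgr$-tautness of the fibers of \emph{both} $\XX$ and $\YY$) holds at each step. One must also be careful with the decorated versus undecorated count: the Box Lemma as stated compares the \emph{measures} $\mu^\xxs, \nu^\xxs$ (hence the closed-rectangle values), not the diagrams directly, so it is enough to track interval multiplicities of the finite zigzags and no limiting argument in $\epsilon$ is needed — the thickening by exactly $\delta$ in $R^\delta$ already absorbs the fiber inclusions. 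Once the single horizontal/vertical ``transport'' step is set up cleanly, iterating it across all four corners of $R$ is routine, and feeding the resulting family $\mu_t^\xxs := \mu^\xxs_{\Hgr(X, (1-t/\delta)f + (t/\delta)g)}$ (which has $\Hgr$-taut fibers under the running hypotheses and satisfies the box inequalities with parameter $|s-t|$ by this Lemma) into Theorem~\ref{stabilityfin} will be what yields the stability theorem in the next step.
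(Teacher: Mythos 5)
Your overall strategy is the right one and matches the paper's: embed both seven-term zigzags into a single commuting diagram of inclusions, use the Restriction Principle to refine, use Diamond/Mayer--Vietoris exactness (supplied by $\Hgr$-tautness) to move between the two, and conclude $\mu^\xxs(R)\leq\nu^\xxs(R^\delta)$ because the extra summand types produced by restriction are counted with nonnegative multiplicity. However, there is a genuine missing idea: the diagram you describe cannot be built out of $f$-slices and $g$-slices alone. The bottom vertices of the $\XX$-zigzag are $f$-levelsets and those of the $\YY$-zigzag are $g$-levelsets, and to join them by Mayer--Vietoris diamonds you need spaces bounded by an $f$-levelset on one side and a $g$-levelset on the other. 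The paper introduces exactly these \emph{mixed} interlevelsets, $\UU_p^q=\{x\mid p\leq f(x),\ g(x)\leq q\}$ and $\VV_p^q=\{x\mid p\leq g(x),\ f(x)\leq q\}$ (defined when $p+\delta\leq q$), and assembles a two-tier ``Himalayan'' diagram whose bottom row alternates $f$- and $g$-levelsets, whose lower diamonds have tops such as $\VV_{a-\delta}^{b}$ and $\UU_{b}^{c-\delta}$, and whose upper diamonds recover the $\YY$-slices $\YY_{a-\delta}^{b+\delta}$, etc. The condition $\delta>\|f-g\|$ is what makes each such top a \emph{normal} neighborhood of the fiber at the bottom, so tautness gives exactness. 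Your proposed substitutes do not work as stated: since $\XX_a^b\subseteq\YY_{a-\delta}^{b+\delta}$, the ``union'' $\XX_a^b\cup\YY_{a-\delta}^{b+\delta}$ is just $\YY_{a-\delta}^{b+\delta}$ and yields no diamond of the form $(\diamond_{AB})$, and ``pushouts along the $g$-levelsets'' is precisely the step that requires the $\UU,\VV$ spaces to be named.

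Two smaller points. First, the final counting step should be made precise as in the paper: after refining, there are nine summand types of the long zigzag that restrict to the type computing $\nudu(R^\delta)$ (three choices of left endpoint among $\VV_{a-\delta}^{a},\VV_{a-\delta}^{b},\YY_{a-\delta}^{b+\delta}$ times three choices of right endpoint), and a chain of Diamond Principle moves identifies exactly one of these nine with $\mudu(R)$; the inequality follows because the other eight are $\geq 0$. Your phrase ``contributes to a summand \ldots after summing'' gestures at this but has the inequality running in the wrong direction unless you isolate the single matching term. Second, the four behaviors $\udt,\ddt,\uut,\dut$ are not interchanged by coordinate reversal in a way that reduces all cases to one ($\udt$ and $\dut$ are each self-dual, while $\ddt\leftrightarrow\uut$), so you still need to run the diagram chase separately for at least three of the four types, as the paper does.
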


\begin{proof}
We only need to consider rectangles $R = [a,b] \times [c,d]$ whose $\delta$-thickening is contained in~$\Hp$. This implies, in addition to ${a < b < c < d}$, that ${b + \delta < c - \delta}$.

The proof requires four different kinds of interlevelset. When $p \leq q$ we have the familiar
\begin{align*}
\XX_p^q = \{ x \in X \mid p \leq f(x) \leq q \},
\qquad
\YY_p^q = \{ x \in X \mid p \leq g(x) \leq q \},
\end{align*}
and when $p + \delta \leq q$ we define two new kinds,
\begin{align*}
\UU_p^q = \{ \text{$p \leq f(x)$ and $g(x) \leq q$}\},
\quad
\VV_p^q =\{ \text{$p \leq g(x)$ and $f(x) \leq q$} \}.
\end{align*}
In other words, $\UU_p^q$ is the space cut out between $f^{-1}(p)$ on the left and $g^{-1}(q)$ on the right. The condition $p + \delta \leq q$ ensures that $\UU_p^q$ and $\VV_p^q$ separate $\XX$ in the obvious way:
\begin{alignat*}{4}
\XX &= \XX^p \cup \UU_p^q \cup \YY_q
&&\quad\text{with}\quad
\XX^p \cap \UU_{p}^{q} = \XX_{p}^{p}
&&\quad\text{and}\quad
&\UU_{p}^{q} \cap \YY_{q} &= \YY_{q}^{q}
\\
\XX &= \YY^p \cup \VV_p^q \cup \XX_q
&&\quad\text{with}\quad
\YY^p \cap \VV_{p}^{q} = \YY_{p}^{p}
&&\quad\text{and}\quad
&\VV_{p}^{q} \cap \XX_{q} &= \XX_{q}^{q}.
\end{alignat*}

Consider the following Himalayan diagram:
\[
\begin{tikzpicture}[xscale=1.1,yscale=1.1]
\draw (1,1) node(11) {$\VV_{a-\delta}^{a}$} 
(3,1) node(31) {$\XX_{a}^b$}
(5,1) node(51) {$\UU_{b}^{b+\delta}$} ;
\draw (4,2) node(42) {$\UU_{a}^{b+\delta}$} ;
\draw (6,2) node(62) {$\UU_{b}^{c-\delta}$} ;
\draw (7,3) node(73) {$\XX_{b}^{c}$} ;
\draw (8,2) node(82) {$\VV_{b+\delta}^{c}$} ;

\draw (9,1) node(91) {$\VV_{c-\delta}^{c}$} ;
\draw (10,2) node(102) {$\VV_{c-\delta}^{d}$} ;
\draw (11,3) node(113) {$\YY_{c-\delta}^{d+\delta}$} ;
\draw (11,1) node(111) {$\XX_{c}^{d}$} ;
\draw (12,2) node(122) {$\UU_{c}^{d+\delta}$} ;
\draw (13,1) node(131) {$\UU_{d}^{d+\delta}$} ;

\draw (2,2) node(22){$\VV_{a-\delta}^b$};
\draw (3,3) node(33){$\YY_{a-\delta}^{b+\delta}$};
\draw (0,0) node(00){$\YY_{a-\delta}^{a-\delta}$} (2,0) node(20){$\XX_{a}^{a}$} (4,0) node(40){$\XX_b^b$} (6,0) node(60){$\YY_{b+\delta}^{b+\delta}$} (8,0) node(80){$\YY_{c-\delta}^{c-\delta}$} (7,1) node(71){$\YY_{b+\delta}^{c-\delta}$}   (10,0) node(100){$\XX_{c}^{c}$}  (12,0) node(120){$\XX_{d}^{d}$}  (14,0) node(140){$\YY_{d+\delta}^{d+\delta}$}  ;

\draw[->] (22) -- (33); 
\draw[->] (20) -- (31); 
\draw[->] (42) -- (33); 
\draw[->] (51) -- (62); 
\draw[->] (62) -- (73);
\draw[->] (71) -- (62); 
\draw[->] (71) -- (82); 
\draw[->] (82) -- (73); 
\draw[->] (91) -- (82); 
\draw[->] (91) -- (102); 
\draw[->] (80) -- (91); 
\draw[->] (102) -- (113); 
\draw[->] (100) -- (91); 
\draw[->] (100) -- (111); 
\draw[->] (111) -- (102); 
\draw[->] (120) -- (111); 
\draw[->] (120) -- (131); 
\draw[->] (131) -- (122);
\draw[->] (131) -- (140);  

\draw[->] (111) -- (122); 

\draw[->] (122) -- (113); 

\draw[->] (31) -- (22); 
\draw[->] (31) -- (42); 
\draw[->] (11) -- (22); 
\draw[->] (00) -- (11); 
\draw[->] (20) -- (11); 

\draw[->] (40) -- (31);

\draw[->] (60) -- (71);  
\draw[->] (80) -- (71);
\draw[->] (60) -- (51);
\draw[->] (51) -- (42);
\draw[->] (40) -- (51);

\end{tikzpicture}
\]
The nine diamonds of this diagram are Mayer--Vietoris. This is automatic for the top three diamonds. For the lower six diamonds we use the $\Hgr$-tautness of the fibers of $\XX$ and $\YY$, and the fact that the space at the top of each diamond is a normal neighborhood of the fiber, since $\delta > \|f - g\|$.

Applying $\Hgr$ to the diagram, we calculate (for example):
\begin{alignat*}{2}
\nudu(R^\delta)
&= \zzboxuu{1}
\\
&= \zzboxuu{2} &&+ (\text{eight other terms})
\\
&= \zzboxuu{3} &&+ (\text{eight other terms})
\\
&= \zzboxuu{4} &&+ (\text{eight other terms})
\\
&= \zzboxuu{5} &&+ (\text{eight other terms})
\\
&= \mudu(R) &&+ (\text{eight other terms}).
\end{alignat*}

To explain the second line, note that there are nine different summand types which restrict to the summand type in the first line: three possible start points  ${(\VV_{a-\delta}^{a}, \VV_{a-\delta}^{b}, \YY_{a-\delta}^{b+\delta})}$ times three possible end points ${(\YY_{c-\delta}^{d+\delta}, \UU_{c}^{d+\delta}, \UU_{d}^{d+\delta})}$. We are interested in only one of the nine terms.

Since the eight other terms are nonnegative, it follows that ${\mudu(R) \leq \nudu(R^\delta)}$ for all relevant~$R$. By symmetry, ${\nudu(R) \leq \mudu(R^\delta)}$ also. The calculations for $\xxt = \udt, \ddt, \uut$ are similar.
\end{proof}

\bigskip

\begin{theorem}[Stability of Parametrized Homology]
\label{thm:stability}
Let $\XX = (X, f)$ and $\mathbb{Y} = (X, g)$ be $\RR$-spaces with the same total space $X$ that satisfy one of the following conditions:

\smallskip
(i) $X$ is a locally compact polyhedron, $f$ and $g$ are proper, and $\Hgr$ is Steenrod--Sitnikov homology.

\smallskip
(iv)
$X$ is a locally compact polyhedron, $f$ and $g$ are proper piecewise-linear maps.

\smallskip
(v)
$X \subseteq \RR^n \times \RR$ is a closed definable set in some o-minimal structure and $f$ is the projection onto the second factor.

The associated r-measures for $\XX, \mathbb{Y}$ are written with the letters $\mu, \nu$ respectively. Then 
\[
\bottle (\dgm^\xxs(\Hgr\XX), \dgm^\xxs(\Hgr\YY)) \leq \|f-g\|
\]
for each type $\xxt = \udt, \ddt, \uut, \dut$.
\end{theorem}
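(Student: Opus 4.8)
The plan is to deduce Theorem~\ref{thm:stability} from the abstract stability result Theorem~\ref{stabilityfin} by interpolating linearly between $f$ and $g$ and invoking the Box Lemma (Lemma~\ref{lem:box}) along the family. Concretely, set $f_t = (1-t/\delta)f + (t/\delta)g$ for $t \in [0,\delta]$, where $\delta = \|f-g\|$, so that $f_0 = f$, $f_\delta = g$, and $\|f_s - f_t\| = |s-t|$ for all $s,t$. For each fixed type $\xxt \in \{\udt,\ddt,\uut,\dut\}$ this gives a $1$-parameter family of r-measures $\mu_t = \mu^\xxs_{\Hgr(X,f_t)}$, and the Box Lemma applied to the pair $(X,f_s)$, $(X,f_t)$ yields $\mu_s(R) \le \mu_t(R^{|s-t|})$ for all $R$. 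Theorem~\ref{stabilityfin} then produces a $\delta$-matching between $\dgm^\xxs(\Hgr\XX) = \dgm(\mu_0)$ and $\dgm^\xxs(\Hgr\YY) = \dgm(\mu_\delta)$, which is exactly the desired bound $\bottle(\dgm^\xxs(\Hgr\XX),\dgm^\xxs(\Hgr\YY)) \le \|f-g\|$.

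For the machinery to apply we must check that, under each of the hypotheses (i), (iv), (v), every intermediate $\RR$-space $(X,f_t)$ has $\Hgr$-taut fibers (so that the Box Lemma is available) and has finite, additive r-measures (so that $\mu_t$ is genuinely a finite r-measure and Theorem~\ref{stabilityfin} applies). For (iv) this is immediate: a convex combination of proper piecewise-linear maps on a locally compact polyhedron is again a proper piecewise-linear map, so Proposition~\ref{prop:taut-ex}(iv) and Proposition~\ref{prop:finite1}(iv) apply to each $f_t$. For (v), $f$ and $g$ are both the projection onto the second factor --- so here $f = g$, $\delta = 0$, and the statement is trivial; alternatively one reads (v) as allowing definable $f,g$, in which case $f_t$ remains definable and Propositions~\ref{prop:taut-ex}(v), \ref{prop:finite1}(v) apply. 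For (i), properness is preserved under convex combinations when $X$ is a locally compact polyhedron (the sublevel-superlevel slabs $f_t^{-1}[m,m']$ are sandwiched between slabs of $f$ up to a bounded shift, hence compact), and Steenrod--Sitnikov homology makes every fiber $\Hgr$-taut by Proposition~\ref{prop:taut-ex}(i); finiteness of the r-measures for Steenrod--Sitnikov homology of a proper map on a locally compact polyhedron follows as in Proposition~\ref{prop:finite1}(i) (approximating each $f_t$ by a piecewise-linear map), noting that the argument there is uniform enough to apply along the whole family.

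The main obstacle is the verification that the one-parameter family $(\mu_t)$ is admissible for Theorem~\ref{stabilityfin} --- that is, that $\mu_t$ is a finite r-measure for \emph{every} $t$, not just the endpoints --- together with the slightly delicate properness bookkeeping in case (i). The Box Lemma itself is stated for two $\RR$-spaces on a common total space with $\Hgr$-taut fibers, so once tautness and finiteness are secured for each $f_t$, the interpolation inequality $\mu_s(R) \le \mu_t(R^{|s-t|})$ drops out of Lemma~\ref{lem:box} with $|s-t|$ in place of $\delta$. I would organize the write-up as: (1) fix $\xxt$ and define $f_t$; (2) record $\|f_s-f_t\| = |s-t|$; (3) verify tautness and finiteness of $\mu_t$ under each of (i),(iv),(v); (4) apply Lemma~\ref{lem:box} to get the box inequalities along the family; (5) apply Theorem~\ref{stabilityfin} to conclude; (6) remark that the four types are handled identically. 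The one genuinely non-formal point is step (3) for (i), where one must confirm that the finiteness argument of Proposition~\ref{prop:finite1}(i) --- choosing $m$ between the relevant coordinates and approximating by a proper piecewise-linear $g$ with triangulable, $\Hgr$-taut fiber $g^{-1}(m)$ --- goes through verbatim for each $f_t$.
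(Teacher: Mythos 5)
Your proof follows the same route as the paper: interpolate linearly between $f$ and $g$, invoke the Box Lemma along the family, then apply Theorem~\ref{stabilityfin}. However, there is one technical gap in your choice of parametrization that the paper's proof avoids. You set $\delta = \|f-g\|$, so that $\|f_s - f_t\| = |s-t|$ exactly for all $s,t \in [0,\delta]$. But Lemma~\ref{lem:box} requires the strict inequality $\delta' > \|f_s - f_t\|$ to conclude $\mu_s(R) \le \mu_t(R^{\delta'})$: the strictness is what guarantees that the top space of each lower diamond in the Himalayan diagram (for instance $\UU_p^q$ when $q - p = \delta'$) is a genuine \emph{neighborhood} of the relevant fiber, so that tautness and Mayer--Vietoris apply. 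With $\|f_s - f_t\| = |s-t|$ the Box Lemma only gives $\mu_s(R) \le \mu_t(R^{|s-t|+\epsilon})$ for every $\epsilon > 0$, and since r-measures are monotone increasing under rectangle enlargement this does \emph{not} pass to the limit to give $\mu_s(R) \le \mu_t(R^{|s-t|})$, which is the hypothesis Theorem~\ref{stabilityfin} needs. The paper sidesteps this by fixing an arbitrary $\delta > \|f-g\|$, defining $f_t$ over $[0,\delta]$ (so $\|f_s - f_t\| < |s-t|$ strictly, and the Box Lemma applies with parameter $|s-t|$), obtaining a $\delta$-matching, and only at the very end taking the infimum over all such $\delta$ to get the bound by $\|f-g\|$. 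Your proposal should be rephrased in this way; the rest of it --- verifying that $f_t$ inherits properness, piecewise-linearity, or definability under hypotheses (i), (iv), (v), so that each intermediate $(X,f_t)$ has $\Hgr$-taut fibers and finite r-measures --- is carried out in more detail than the paper, which handles it in a single sentence, and your observation that (v) as literally stated forces $f = g$ is a fair reading of an imprecision in the theorem statement.
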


\bigskip

\begin{proof}
For any $\delta > \|f - g\|$ we can define the interpolating family
\[
f_t = (1 - (t/\delta)) f + (t/\delta) g
\]
for $t \in [0,\delta]$.
%
%
Note that $f_0= f$ and $f_{\delta} = g$. Since $f$ is proper and $\|f - f_t\|$ is bounded for all $t\in[0, \delta]$, $f_t$ are proper.  So each $(X, f_t)$ in situations (i) and (iv) determines an r-measure $\mu^\xxs_t$.
For any $s,t \in [0,\delta]$ we have $\|f_s - f_t\| < |s-t|$ and therefore
\[
\mu^\xxs_s(R)\leq \mu^\xxs_t (R^{|s-t|})
\]
by Lemma~\ref{lem:box}.
Theorem~\ref{stabilityfin} implies that there exists an $\delta$-matching between
\[
\dgm(\mu^\xxs_0) = \dgm(\mu^\xxs) = \dgm^\xxs(\Hgr\XX)
\quad
\text{and}
\quad
\dgm(\mu^\xxs_{\|f-g\|}) = \dgm(\nu^\xxs) = \dgm^\xxs(\Hgr\YY).
\]
Since this is true for all $\delta > \| f - g \|$ the result follows.
\end{proof}







%
%

\subsection{Extended persistence}
\label{sec:extended-persistence}

Closely related to ours is the work on \emph{extended persistence} by
Cohen-Steiner, Edelsbrunner, and Harer~\cite{ExtendingPersistence}.
Among other contributions, they construct four types of diagrams associated with an $\RR$-space. These diagrams can describe the geometry and
topology of a three-dimensional shape, a feature that finds applications in
protein docking~\cite{elevation}. In this section we explain how their four diagrams  correspond exactly with the four parametrized homology measures we have developed in this paper. 

Given an $\RR$-space $\XX = (X,f)$ they examine a concatenation of two sequences of spaces: a filtration of the sublevelsets of~$f$ and a filtration of pairs of
the space relative to the superlevelsets of~$f$.
\[
    \XX^{a_1}           \to
    \XX^{a_2}           \to
    \ldots              \to
    \XX^{a_n}           \to
    \XX = (\XX, \emptyset)  \to
    (\XX, \XX_{a_n})    \to
    \ldots              \to
    (\XX, \XX_{a_2})    \to
    (\XX, \XX_{a_1})
\]
The indices $a_1, a_2, \dots, a_n$ are taken to be the critical values of $f$; the underlying assumption of~\cite{ExtendingPersistence} being that we are in a Morse type situation. 

Within this sequence, four types of intervals are distinguished: those that
are supported on the absolute (ordinary) half of the sequence,
those supported on the relative half,
and those supported over both halves, in the latter further distinguishing intervals where the
superscript of the space associated to the left endpoint is lower or higher than
the subscript in the relative part of the right endpoint.

To translate their work into the language of measures,
for real numbers $a < b < c < d$ we consider a sequence of spaces:
\[
 \XX^{EP}_{a,b,c,d}: \XX^a        \to
                     \XX^b        \to
                     \XX^c        \to
                     \XX^d        \to
                     (\XX, \XX_d) \to
                     (\XX, \XX_c) \to
                     (\XX, \XX_b) \to
                     (\XX, \XX_a).
\]
We begin by translating their work into the language of measures. This, incidentally, removes the restrictive Morse-type hypothesis from the definition of the extended persistence diagram (see also [17] Section 6.2). For real numbers a < b < c < d we consider a sequence of spaces:
\begin{align*}
\muOrd_i ([a,b] \times [c,d])
	&= \langle \ordPic \mid \Hgr_i(\XX^{EP}_{a,b,c,d}) \rangle
\\
 \muRel_i ([a,b] \times [c,d])
 	&= \langle \relPic \mid \Hgr_i(\XX^{EP}_{a,b,c,d}) \rangle
\\
\muExtP_i ([a,b] \times [c,d])
    	&= \langle \extPPic \mid \Hgr_i(\XX^{EP}_{a,b,c,d}) \rangle
\\
\muExtM_i ([a,b] \times [c,d])
	&= \langle \extMPic \mid \Hgr_i(\XX^{EP}_{a,b,c,d}) \rangle.
\end{align*}
In the case of a Morse type $\RR$-space, we can retrieve the extended persistence intervals by restricting $a,b,c,d$ to the critical values $a_i$ of~$f$. However, these four measures are defined without that assumption.

The main result of this section expresses the relationship between the extended
persistence and the parametrized homology of the pair $\XX = (X, f)$.
Specifically, the four extended persistence measures are in one-to-one correspondence with the four
parametrized homology measures.

\begin{theorem}
Let $\Hgr$ be a homology functor with field coefficients and $\XX$ an $\RR$-space with $\Hgr$-taut levelsets. Then:
\begin{alignat*}{2}
\mudd_i &= \muOrd_i    \qquad\qquad
&
\muuu_i &= \muRel_{i+1}
\\
\mudu_i &= \muExtP_i 
&
\muud_i &= \muExtM_{i+1}
\end{alignat*}
Here we have abbreviated $\mu^\xxs_{\Hgr_i\XX}$ to $\mu^\xxs_i$ for each type $\xxt = \ddt, \dut, \uut, \udt$.
\end{theorem}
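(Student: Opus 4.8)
Since an equality of r-measures is an equality of functions on $\Rect(\Hp)$, it suffices to check each of the four identities on an arbitrary rectangle $R=[a,b]\times[c,d]$ with $a<b<c<d$ (the cases $a=-\infty$ or $d=+\infty$ being handled by the standing emptiness conventions, under which the extreme homology groups of both $\Hgr\XX_{\{a,b,c,d\}}$ and $\Hgr(\XX^{EP}_{a,b,c,d})$ vanish, so the two sides degenerate compatibly). No finiteness is needed for this; under the further hypotheses of Theorem~\ref{definepar} the corresponding equality of diagrams then follows from Theorem~\ref{equivextended}. For such an $R$ the left-hand sides are the four distinguished multiplicities of the $7$-term zigzag $\Hgr_i\XX_{\{a,b,c,d\}}$, and the right-hand sides are the four distinguished multiplicities of the $8$-term persistence modules $\Hgr_j(\XX^{EP}_{a,b,c,d})$ for $j\in\{i,i+1\}$.

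The plan is to connect these two finite diagrams by a single finite commutative diagram: a bounded \emph{Mayer--Vietoris pyramid} of the kind used by Carlsson, de~Silva and Morozov~\cite{Zigzagpersistenthomologyandreal} to compare levelset zigzag persistence with extended persistence, built here over the $\Hgr$-taut (rather than Morse-type) setting of this paper. Its vertices are the interlevelset groups $\Hgr(\XX_p^q)$ for $p\le q$ in $\{a,b,c,d\}$, the sublevelset groups $\Hgr(\XX^q)$ and superlevelset-relative groups $\Hgr(\XX,\XX_q)$ for $q\in\{a,b,c,d\}$, some relative interlevelset groups such as $\Hgr(\XX_p^q,\XX_p^p)$ and $\Hgr(\XX_p^q,\XX_q^q)$, and a few zero objects. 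The tautness hypothesis enters twice. First, it makes the \emph{absolute} diamonds of the pyramid --- those glued from decompositions such as $\XX^b=\XX^a\cup\XX_a^b$ with $\XX^a\cap\XX_a^b=\XX_a^a$ --- exact, by exactly the Mayer--Vietoris argument already used in the proof of Theorem~\ref{thm:additivity}, since the top space of each such diamond is a normal neighbourhood of the relevant fibre. Second, applied with $U=\XX$ (which is itself a normal neighbourhood of every fibre $\XX_q^q$, being a closed neighbourhood of $\XX_q^q$ in $\XX$), it supplies the excision isomorphisms $\Hgr_k(\XX,\XX_q)\cong\Hgr_k(\XX^q,\XX_q^q)$ of Definition~\ref{taut}, which re-express each superlevelset-relative vertex as a relative interlevelset group and so splice the relative half of $\XX^{EP}_{a,b,c,d}$ into the pyramid. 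The remaining \emph{relative} diamonds have exactness equal to that of a long exact sequence of a pair or triple; as with the large relative diagram in the proof of Theorem~\ref{frosinithm}, one turns each of these into a genuine four-term exact diamond fit for Theorem~\ref{DP} by adjoining the appropriate relative group and a zero object.

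With the pyramid in hand, each of the four identities is read off by a finite diagram chase, repeatedly applying the Restriction Principle (Theorem~\ref{RP}) to delete intermediate vertices and to pass between the various sub-zigzags sitting inside the pyramid, and the Diamond Principle (Theorem~\ref{DP}) to transport an interval multiplicity across each diamond, using parts~(iii)--(iv) to track how its decorated endpoints move. Transporting the $\ddt$-type summand of $\Hgr_i\XX_{\{a,b,c,d\}}$ along the pyramid lands it on precisely the $\muOrd$-type summand of $\Hgr_i(\XX^{EP}_{a,b,c,d})$; similarly the $\dut$-type becomes $\muExtP$-type in degree $i$, the $\uut$-type becomes $\muRel$-type in degree $i+1$, and the $\udt$-type becomes $\muExtM$-type in degree $i+1$. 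The degree shift in the last two cases is forced by the connecting homomorphisms one crosses: by the behaviour dictionary (cf.\ Theorem~\ref{16to4Morse} and the discussion in Section~\ref{subsec:4-measures}) these are exactly the two behaviours whose interval is open at its lower endpoint, i.e.\ the feature is killed on the $a$-side by a $(k+1)$-chain, and such a feature is visible in $\XX^{EP}_{a,b,c,d}$ only through a connecting map $\partial$ of a long exact sequence of a pair, which raises the homological degree by one. (Alternatively, the $\uut$ and $\udt$ identities can be deduced from the $\ddt$ and $\dut$ identities by the coordinate-reversal symmetry of the four measures together with the standard degree-one relation between the ordinary part of $-f$ and the relative part of $f$.)

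The routine part is the absolute half of the pyramid together with the ensuing Restriction/Diamond bookkeeping, all available from Sections~\ref{sec:tautness}--\ref{sec:additivity}. I expect the main obstacle to be the relative half: one must confirm that $\Hgr$-tautness really delivers every excision isomorphism the argument consumes --- in particular those involving the whole space $\XX$ --- and one must arrange each long-exact-sequence-of-a-pair step as an honest exact diamond, with the zero objects and relative groups placed so that Theorem~\ref{DP} applies and so that the interval-type bookkeeping, including the single degree shift, comes out as claimed. Once those relative diamonds are certified exact, the rest is the same calculus of the Restriction and Diamond Principles used throughout the paper.
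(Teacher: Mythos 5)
Your proposal takes the same route as the paper: build the bounded Mayer--Vietoris pyramid connecting the $7$-term levelset zigzag $\Hgr_i\XX_{\{a,b,c,d\}}$ to the $8$-term sequence $\Hgr(\XX^{EP}_{a,b,c,d})$ and march the distinguished summand across it by repeated use of the Diamond and Restriction Principles, with $\Hgr$-tautness supplying exactness of the Mayer--Vietoris diamonds. The paper's proof is just that (compressed into a figure and a chain of pictorial equalities for the $\mudu=\muExtP$ case), so you have correctly reconstructed the argument; you in fact spell out more explicitly than the paper does how tautness is consumed, both for the absolute diamonds and, via $U=\XX$, for the excision isomorphisms $\Hgr_k(\XX^q,\XX_q^q)\cong\Hgr_k(\XX,\XX_q)$ that splice in the relative half. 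One small caution on your parenthetical shortcut: coordinate reversal fixes the $\udt$ type (and fixes $\dut$), so it cannot by itself carry the $\dut$ identity over to the $\udt$ one; the symmetry cleanly yields only $\uut$ from $\ddt$, and the $\udt$ case still needs the direct pyramid chase.
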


\begin{proof}
We prove the third equality; the rest are proven similarly.

\begin{figure}[h!]
    \centering
    \includegraphics[scale=1.25]{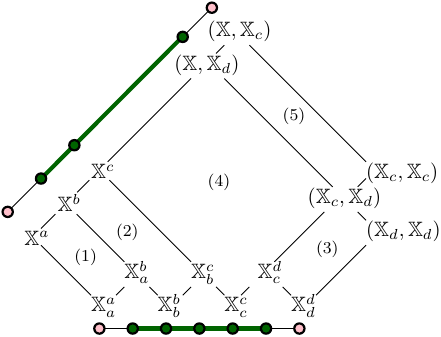}
    \caption{Diamonds involved in the proof $\mudu_i = \muExtP_i$.}
    \label{fig:lzz-ep-transformation}
\end{figure}

Repeatedly applying the Diamond Principle to the spaces in
Figure~\ref{fig:lzz-ep-transformation}, we get
\begin{align*}
  \mudu_i ([a,b] \times [c,d]) &= \muLzzEP{1}
              = \muLzzEP{2}
              = \muLzzEP{3} \\
             &= \muLzzEP{4}
              = \muLzzEP{5}
              = \muLzzEP{6} \\
             &= \muLzzEP{7} = \muExtP_i ([a,b] \times [c,d])
\end{align*}
for any rectangle $[a,b] \times [c,d]$. Thus the measures are equal.
\end{proof}

\section{Parametrized cohomology}
\label{sec:parcoho}
Let $\XX=(X, f)$ be a $\RR$-space, and let $\Hgr^*$ be a \emph{cohomology} functor with coefficients in a field~$\kk$. We define four persistence measures, and therefore four persistence diagrams, just as we did with homology functors.

\begin{remark}
The formalism applies equally well to extraordinary cohomology functors (over $\kk$).
\end{remark}

Here are the main steps.
For any rectangle $R = [a,b] \times [c,d]$, the zigzag diagram of spaces
\[
\XX_{\{a, b, c, d\}}:  \raisebox{-4.25ex}{
\begin{tikzpicture}[xscale=1.05,yscale=1.05]
\draw (1,1) node(11) {$\XX_a^b$} (3,1) node(31) {$\XX_b^c$} (5,1) node(51) {$\XX_c^d$} ;
\draw (0,0) node(00){$\XX_a^a$} (2,0) node(20){$\XX_b^b$} (4,0) node(40){$\XX_c^c$} (6,0) node(60){$\XX_d^d$}  ;
\draw[->] (00) -- (11); 
\draw[->] (20) -- (11);
\draw[->] (20) -- (31);
\draw[->] (40) -- (31);
\draw[->] (40) -- (51);
\draw[->] (60) -- (51);  
\end{tikzpicture}}
\]
becomes a zigzag diagram of vector spaces
\[
\Hgr^*\XX_{\{a, b, c, d\}}: 
\raisebox{-4.25ex}{
\begin{tikzpicture}[xscale=1.1,yscale=1.1]
\draw
(1,1) node(11) {$\Hgr^*(\XX_a^b)$}
(3,1) node(31) {$\Hgr^*(\XX_b^c)$}
(5,1) node(51) {$\Hgr^*(\XX_c^d)$} ;
\draw
(0,0) node(00){$\Hgr^*(\XX_a^a)$}
(2,0) node(20){$\Hgr^*(\XX_b^b)$}
(4,0) node(40){$\Hgr^*(\XX_c^c)$}
(6,0) node(60){$\Hgr^*(\XX_d^d)$.}  ;
\draw[->] (11) -- (00); 
\draw[->] (11) -- (20);
\draw[->] (31) -- (20);
\draw[->] (31) -- (40);
\draw[->] (51) -- (40);
\draw[->] (51) -- (60);  
\end{tikzpicture}}
\]
with the arrows reversed. Based on this diagram we define four measures
\begin{align*}
\muud_{\Hgr^*\XX}(R)
&= \langle\zzud \, |\, \Hgr^*\XX_{\{a, b, c, d\}} \rangle
\\
\mudd_{\Hgr^*\XX}(R)
&= \langle \zzdd \, |\, \Hgr^*\XX_{\{a, b, c, d\}} \rangle
\\
\muuu_{\Hgr^*\XX}(R)
&= \langle \zzuu \, |\, \Hgr^*\XX_{\{a, b, c, d\}} \rangle
\\
\mudu_{\Hgr^*\XX}(R)
&= \langle \zzdu \, |\, \Hgr^*\XX_{\{a, b, c, d\}} \rangle
\end{align*}
formally in the same way as before. The measures are additive if the fibers are $\Hgr^*$-taut (suitably defined), and finite if $\langle  \onof{6} \mid \Hgr^* \XX_{\{b, c\}}\rangle < \infty$. If both these conditions hold then four diagrams
\[
\Dgm^\uds(\Hgr^* \XX), \quad
\Dgm^\dds(\Hgr^* \XX), \quad
\Dgm^\uus(\Hgr^* \XX), \quad
\Dgm^\dus(\Hgr^* \XX)
\]
are defined. These diagrams constitute the parametrized cohomology of $\XX$.

\medskip
To a first approximation, there is no new information in parametrized cohomology.

\begin{theorem}
\label{prop:duality}
If $\Hgr^*$ is the cohomology functor dual to a homology functor~$\Hgr$, then the four diagrams for $\Hgr^* \XX$ are equal to the respective four diagrams for $\Hgr\XX$. 
\end{theorem}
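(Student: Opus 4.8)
The plan is to recognize that everything in the construction of the four measures is \emph{linear algebra on zigzag modules}, and that replacing $\Hgr$ by its dual cohomology functor $\Hgr^{*}=\Hom_{\kk}(\Hgr(-),\kk)$ amounts to nothing more than applying the vector-space duality functor $(-)^{\vee}=\Hom_{\kk}(-,\kk)$ objectwise and the transpose morphismwise. Since the transpose of a linear map reverses its direction, the zigzag module $\Hgr^{*}\XX_{\{a,b,c,d\}}$ is exactly the linear dual of $\Hgr\XX_{\{a,b,c,d\}}$: at each vertex the dual vector space, along each edge the transpose map, hence with every arrow reversed --- which is precisely the reversed seven-term diagram displayed in the parametrized-cohomology section. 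So the theorem reduces to understanding how vector-space duality acts on the interval multiplicities of a zigzag module.

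Accordingly, the first step is the key lemma: for a zigzag module $\VV$ of shape $\sigma$ and any interval $[p,q]$,
\[
\langle \II_{\sigma}[p,q]\mid \VV\rangle \;=\; \langle \II_{\bar\sigma}[p,q]\mid \VV^{\vee}\rangle ,
\]
where $\bar\sigma$ is the shape obtained from $\sigma$ by reversing every arrow. For finite-dimensional $\VV$ this is immediate: $(-)^{\vee}$ is an exact, direct-sum-preserving, contravariant self-equivalence of $\Vect_{\kk}$, so it carries an interval decomposition $\VV\cong\bigoplus_{i}\II_{\sigma}[p_{i},q_{i}]$ to $\VV^{\vee}\cong\bigoplus_{i}(\II_{\sigma}[p_{i},q_{i}])^{\vee}$, and one checks directly that $(\II_{\sigma}[p,q])^{\vee}\cong\II_{\bar\sigma}[p,q]$ --- the dual of a string of copies of $\kk$ joined by identity maps is again such a string (the transpose of $\id_{\kk}$ is $\id_{\kk}$), supported on exactly the same indices, with the arrows reversed. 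Applying this with $\VV=\Hgr\XX_{\{a,b,c,d\}}$, whose shape $\sigma$ is the ``mountain'' shape $\to\!\leftarrow\!\to\!\leftarrow\!\to\!\leftarrow$ on seven vertices, so that $\bar\sigma$ is the ``valley'' shape realized by $\Hgr^{*}\XX_{\{a,b,c,d\}}$: the four symbols $\udt,\ddt,\uut,\dut$ name interval modules by their \emph{support} alone, and the support of an interval module is unchanged by reversing the shape, so the lemma gives $\langle\,\cdot\,\mid\Hgr^{*}\XX_{\{a,b,c,d\}}\rangle=\langle\,\cdot\,\mid\Hgr\XX_{\{a,b,c,d\}}\rangle$ for each of the four patterns. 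Hence $\mu^{\xxs}_{\Hgr^{*}\XX}(R)=\mu^{\xxs}_{\Hgr\XX}(R)$ for every rectangle $R$ and every type $\xxt$; this equality of functions on $\Rect(\Hp)$ is unconditional, so in particular $\mu^{\xxs}_{\Hgr^{*}\XX}$ is an additive and finite r-measure exactly when $\mu^{\xxs}_{\Hgr\XX}$ is, and equal r-measures have equal decorated diagrams by the Equivalence Theorem (Theorem~\ref{equivextended}). The theorem follows.

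I expect the main obstacle to be the infinite-dimensional case: if some $\Hgr(\XX_{p}^{q})$ is infinite-dimensional, then $\bigl(\bigoplus_{i}\II_{\sigma}[p_{i},q_{i}]\bigr)^{\vee}$ is a priori a \emph{product} $\prod_{i}\II_{\bar\sigma}[p_{i},q_{i}]$ rather than a direct sum, so one must still argue that the multiplicity of each of our (finite-support) interval modules is preserved. I would deal with this by proving the lemma directly rather than through a global decomposition: split off the $[p,q]$-isotypic part $\II_{\sigma}[p,q]^{\,\langle[p,q]\mid\VV\rangle}$ as a direct summand --- a splitting of a \emph{two}-term direct sum dualizes correctly even in infinite dimensions, since there $\prod$ and $\bigoplus$ agree --- and then show that the dual of the complementary summand contains no copy of $\II_{\bar\sigma}[p,q]$, invoking Auslander's decomposition theorem for infinite-dimensional zigzag modules (already cited for the well-definedness of zigzag persistence) together with the fact that a finite-support interval module over a type-$A$ quiver is both finitely generated and finitely cogenerated, so that $\Hom$ out of it commutes with the relevant products. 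Alternatively, under each hypothesis of the theorem one can first use the finiteness of $\langle\onof{6}\mid\Hgr\XX_{\{b,c\}}\rangle$ (Lemma~\ref{bendichprop}) to note that each of the four multiplicities in sight is already finite, which lets one work with finite pieces throughout. The remaining checks --- that the transpose of an inclusion-induced map is the map induced in cohomology, and that reversing arrows sends each of the four pictorial patterns to itself --- are routine.
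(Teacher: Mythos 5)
Your approach is essentially the paper's: identify $\Hgr^{*}\XX_{\{a,b,c,d\}}$ with the linear dual $\Hom(\Hgr\XX_{\{a,b,c,d\}},\kk)$ via the universal coefficient theorem, and then reduce to the statement (the paper's Proposition~\ref{prop:duality-2}) that dualizing a zigzag module preserves interval multiplicities while reversing the shape, so that the four multiplicities, which depend only on supports, are unchanged. You are right to worry that an interval decomposition $\VV\cong\bigoplus_{i}\II[p_{i},q_{i}]$ dualizes a priori to a \emph{product}, but the paper dodges this more cheaply than either of your fixes: it groups the summands by isotype, writing $\VV\cong\bigoplus_{p,q}V_{p,q}\otimes\II[p,q]$, and now the outer sum is a \emph{finite} direct sum (there are only finitely many pairs $(p,q)$ with $1\le p\le q\le n$), so it dualizes termwise; and since each $\II[p,q]$ is finite-dimensional, $(V_{p,q}\otimes\II[p,q])^{*}\cong V_{p,q}^{*}\otimes\II[p,q]^{*}$. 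One then reads off $\langle[p,q]\mid\VV^{*}\rangle=\dim V_{p,q}^{*}$, which agrees with $\dim V_{p,q}=\langle[p,q]\mid\VV\rangle$ up to the stated convention that infinite cardinalities are not distinguished. This makes your appeal to Auslander's theorem, finite cogeneration of type-$A$ interval modules, or the finiteness bound of Lemma~\ref{bendichprop} unnecessary --- those arguments are correct, but they solve a problem that the right bookkeeping removes. One small additional remark: your fallback (b) shows the relevant multiplicities are finite, but by itself that does not make the whole zigzag module finite-dimensional, so you would still need something like your isotypic-splitting argument to conclude; it is not an independent shortcut.
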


\begin{proof}
The universal coefficient theorem gives a natural isomorphism of functors $\Hgr^*(-) \cong \Hom(\Hgr(-), \kk)$. This implies that there is an isomorphism of zigzag modules
\[
\Hgr^* \XX_{\{a,b,c,d\}} \cong \Hom( \Hgr\XX_{\{a,b,c,d\}}, \kk)
\]
for every $a < b \leq c < d$. So it is sufficient to prove that any zigzag module $\VV$ has the same interval-module multiplicities as its dual $\VV^* = \Hom(\VV, \kk)$.
More precisely, Proposition~\ref{prop:duality-2} will show that the finite multiplicities agree. This is enough, because the construction of a diagram from its measure does not discriminate between different infinite cardinalities.
\end{proof}

\begin{proposition}
\label{prop:duality-2}
Let $\VV$ be a zigzag module of length~$n$ and let $\VV^* = \Hom(\VV,\kk)$ be its dual.
Then, for all $1 \leq p \leq q \leq n$, we have
\[
\langle [p,q] \mid \VV \rangle = \langle [p,q] \mid \VV^* \rangle,
\]
with the understanding that all infinite cardinalities are regarded as equal.
\end{proposition}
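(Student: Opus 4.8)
The plan is to start from the interval decomposition of $\VV$ furnished by the theorems of Gabriel and Auslander recalled in Section~\ref{subsec:zigzag}, say $\VV \cong \bigoplus_{i\in I}\II[p_i,q_i]$, and to read off the decomposition of $\VV^* = \Hom(\VV,\kk)$ from it. The only phenomenon not already visible in the finite-dimensional case is that $\Hom(-,\kk)$ turns direct sums into direct \emph{products}: applying it index by index gives an isomorphism of zigzag modules $\VV^* \cong \prod_{i\in I}\II[p_i,q_i]^{*}$, where each factor and the whole product now live over the shape obtained from that of $\VV$ by reversing every arrow. So the substance of the proof is to resolve this product back into a direct sum of interval modules and count.

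Two elementary ingredients do this. First, a single interval module is self-dual in the appropriate sense: $\II[p,q]^{*}$ is precisely the interval module $\II[p,q]$ of the reversed shape, since $\kk^{*}\cong\kk$, $0^{*}=0$, and the transpose of an identity map is again an identity map. Second, and crucially, the shape has finite length $n$, so only finitely many interval types $[p,q]$ with $1\le p\le q\le n$ occur among the $[p_i,q_i]$. Grouping the factors of $\prod_{i\in I}\II[p_i,q_i]^{*}$ by interval type therefore yields a \emph{finite} product $\prod_{[p,q]}\bigl(\prod_{i\in I_{[p,q]}}\II[p,q]\bigr)$, with $I_{[p,q]}=\{\,i : [p_i,q_i]=[p,q]\,\}$; and a finite product of zigzag modules is literally their direct sum. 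It remains to decompose each $\prod_{i\in I_{[p,q]}}\II[p,q]$. That zigzag module has the fixed vector space $\kk^{I_{[p,q]}}$ at every index in $[p,q]$, is zero elsewhere, and has all intermediate maps equal to the identity; choosing a $\kk$-basis of $\kk^{I_{[p,q]}}$ exhibits it as $\bigoplus_{\kappa_{[p,q]}}\II[p,q]$, where $\kappa_{[p,q]}=\dim_{\kk}\kk^{I_{[p,q]}}$ equals $|I_{[p,q]}|$ when that is finite and is some infinite cardinal otherwise. Assembling, $\VV^{*}\cong\bigoplus_{[p,q]}\bigoplus_{\kappa_{[p,q]}}\II[p,q]$, so by Krull--Schmidt--Azumaya uniqueness $\langle[p,q]\mid\VV^{*}\rangle=\kappa_{[p,q]}$; this equals $|I_{[p,q]}|=\langle[p,q]\mid\VV\rangle$ whenever the latter is finite, and is infinite exactly when it is, which is all that is claimed once infinite cardinalities are identified.

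The main obstacle is precisely this sum/product mismatch: one would like $\VV^{*}\cong\bigoplus_{i}\II[p_i,q_i]^{*}$, which fails for infinite $I$, and the remedy is the finiteness of the index set $\{1,\dots,n\}$ — hence of the set of interval types — combined with the fact that an infinite product of copies of one interval module is again a (larger) direct sum of copies of it. In the finite-dimensional case everything degenerates: product equals sum, $\II[p,q]$ is self-dual, and the statement is immediate from the decomposition of $\VV$.
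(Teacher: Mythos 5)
Your proof is correct and follows essentially the same route as the paper's: both arguments exploit the finiteness of the set of interval types and the fact that dualization preserves dimension up to the finite/infinite distinction, the only difference being bookkeeping (you dualize the full decomposition into a product and then regroup by isotype, while the paper groups first, writing $\VV \cong \bigoplus_{p,q} V_{p,q}\otimes\II[p,q]$, and then dualizes the finite sum termwise). Your version is, if anything, slightly more explicit about why the sum/product mismatch is harmless in the infinite-dimensional case.
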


Note that the shape of $\VV^*$ is the shape of $\VV$ with the arrows reversed, since $\Hom(-,\kk)$ is contravariant.
We write $\II[p,q]$ to denote the interval module supported over $[p,q]$ that has the same arrow orientations as~$\VV$. The corresponding interval module with opposite arrow orientations can be identified with its dual $\II[p,q]^* \cong \Hom(\II[p,q], \kk)$.

\begin{proof}
An interval decomposition of $\VV$ may be interpreted as an isomorphism
\[
\VV \cong \bigoplus_{p,q} V_{p,q} \otimes \II[p,q],
\]
where the direct sum ranges over $0 \leq p \leq q \leq n$, and where the $V_{p,q}$ are vector spaces. The interval multiplicities of~$\VV$ are given by the formula $\langle \II[p,q] \mid \VV \rangle = \dim(V_{p,q})$.
We take the dual of both sides to obtain 
\[
\VV^* \cong \bigoplus_{p,q} V_{p,q}^* \otimes \II[p,q]^*.
\]
This depends on two standard facts: (i) the dual of a finite direct sum of vector spaces is naturally isomorphic to the direct sum of the duals of the vector spaces; and (ii) the dual of the tensor product of a vector space and a finite-dimensional vector space is naturally isomorphic to the tensor product of the duals of the two vector spaces.
Thus
\[
\langle [p,q] \mid \VV \rangle
=
\langle \II[p,q] \mid \VV \rangle
=
\dim(V_{p,q})
\stackrel{\operatorname{fin}}{=}
\dim(V_{p,q}^*)
=
\langle \II[p,q]^* \mid \VV^* \rangle
=
 \langle [p,q] \mid \VV^* \rangle
\]
where $x \stackrel{\operatorname{fin}}{=} y$ means ``$x$ and $y$ are equal or are both infinite''.
\end{proof}

In practice, one may choose to describe a given diagram as parametrized homology or cohomology according to whichever seems more natural in the given context.
For example, here is a parametrized version of the classical Alexander duality theorem:

\begin{theorem}[Parametrized Alexander Duality~\cite{kalisnik, sarathesis}]\label{th:AD}
For $n\geq 2$, let $X \subset \RR^n \times \RR$, let ${Y = (\RR^n \times \RR) \setminus X}$,
and let $p: \RR^n \times \RR \to \RR$ be the projection onto the second factor. We assume that $(X,p)$ is proper, so that all levelsets $\XX_a^a$ and slices $\XX_a^b$ are compact. 
If parametrized \v{C}ech cohomology is defined for $\XX =(X, p|_X)$,
 then  it is also defined for $\YY=(Y, p|_Y)$. Additionally, for all $j  = 0, \ldots, n-1$:
\begin{align*}
\Dgm^\uds(\widetilde{\Hgr}_{n-j-1}\YY)
&=
\Dgm^\dus(\check{\Hgr}^j \XX)
\\
\Dgm^\dds(\widetilde{\Hgr}_{n-j-1}\YY)
&=
\Dgm^\uus(\check{\Hgr}^j \XX)
\\
\Dgm^\uus(\widetilde{\Hgr}_{n-j-1}\YY)
&=
\Dgm^\dds(\check{\Hgr}^j \XX)
\\
\Dgm^\dus(\widetilde{\Hgr}_{n-j-1}\YY)
&=
\Dgm^\uds(\check{\Hgr}^j \XX)
\end{align*}
\end{theorem}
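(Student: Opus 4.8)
Here is a plan for proving the Parametrized Alexander Duality theorem.

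\emph{Reduction to a rectangle‑by‑rectangle comparison of zigzag modules.}
By the Equivalence Theorem (Theorem~\ref{equivextended}) each of the eight diagrams in the statement is determined by its r‑measure, and by the \v{C}ech/Steenrod--Sitnikov form of Theorem~\ref{frosinithm}, together with Theorem~\ref{prop:duality} and the coincidence of \v{C}ech and Steenrod--Sitnikov theories for the fields $\ff_p,\qq,\rr$, the endpoint decoration of every point of $\Dgm^\uds,\Dgm^\dds,\Dgm^\uus,\Dgm^\dus$ for $\check{\Hgr}^j\XX$ and for $\widetilde{\Hgr}_{n-j-1}\YY$ is forced. Hence the four asserted identities amount to four equalities of \emph{undecorated} diagrams, and for these it suffices to produce, for every rectangle $R=[a,b]\times[c,d]$, an isomorphism between the $7$-term zigzag module $\check{\Hgr}^{j}\XX_{\{a,b,c,d\}}$ and the zigzag module $\widetilde{\Hgr}_{n-j-1}\YY_{\{a,b,c,d\}}$ read with its index set reversed, carried out compatibly with the four central interval shapes that define the measures. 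The claimed permutation of behaviors ($\udt\leftrightarrow\dut$, $\ddt\leftrightarrow\uut$, i.e.\ ``killed'' $\leftrightarrow$ ``expires'' at both ends, equivalently every decoration flips) should fall out of this bookkeeping.

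\emph{Alexander duality inside a compactified slab.}
Fix $R$ and let $S$ be the one‑point compactification of the slab $\RR^n\times[a,d]$; since $[a,d]$ is contractible, $S$ is a $\kk$‑(co)homology $n$‑sphere. Properness of $p$ makes every interlevelset $\XX_p^q$ with $[p,q]\subseteq[a,d]$ a compact subspace of $S$, so classical Alexander duality supplies natural isomorphisms $\widetilde{\Hgr}_{n-j-1}(S\setminus\XX_p^q)\cong\check{\Hgr}^{j}(\XX_p^q)$. Naturality converts a restriction map $\check{\Hgr}^j(\XX_p^q)\to\check{\Hgr}^j(\XX_{p'}^{q'})$ (for $[p',q']\subseteq[p,q]$) into the map induced by the inclusion $S\setminus\XX_p^q\hookrightarrow S\setminus\XX_{p'}^{q'}$. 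Since the arrows of the cohomology zigzag $\check{\Hgr}^{j}\XX_{\{a,b,c,d\}}$ are exactly these restrictions, duality turns it into the homology zigzag of the complements $S\setminus\XX_p^q$ with every arrow, and hence the index set, reversed.

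\emph{Identifying the complements with $\YY$‑interlevelsets and finishing.}
The space $S\setminus\XX_p^q$ is $\YY_p^q$ together with the two ``overhang'' pieces $\RR^n\times[a,p)$ and $\RR^n\times(q,d]$ and the point at infinity. Using the $\Hgr$‑tautness of the bounding fibers $\XX_p^p$ and $\XX_q^q$ (Proposition~\ref{prop:taut-ex}) to retract the overhangs onto cones on $\YY_p^p$ and $\YY_q^q$, and the continuity of \v{C}ech homology to absorb the point at infinity, one shows that $S\setminus\XX_p^q$ is $\kk$‑homology‑equivalent, naturally in $p$ and $q$, to $\YY_p^q$ relative to its level‑$p$ and level‑$q$ fibers --- that is, to ``$\YY$ over the open interval $(p,q)$''. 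This rewrites the reversed zigzag of the previous step as $\widetilde{\Hgr}_{n-j-1}\YY_{\{a,b,c,d\}}$ read backwards, and the combination of index reversal with the closed$\,\to\,$open shift introduced by the relative (coning) identification carries each of the four central shapes to its ``decoration‑flipped'' partner. Matching multiplicities then yields the four displayed equalities.

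\emph{Main obstacle, and an alternative route.}
The delicate step is the third one: the overhangs need not deformation‑retract onto $\YY_p^q$ when $X$ is ``vertical'' at a cut level, so the identification of $S\setminus\XX_p^q$ with the relative space must be performed at the level of \v{C}ech homology rather than of spaces, and uniformly enough to respect every arrow in the $7$‑term zigzag; this is exactly where properness of $p$ and the restriction to coefficient fields for which \v{C}ech and Steenrod--Sitnikov homology agree are used, and it is the part carried out in detail in~\cite{kalisnik, sarathesis}. A cleaner, if less self‑contained, alternative is to first prove the theorem for Morse‑type $X$, where Theorem~\ref{16to4Morse} replaces all four diagrams by a single levelset zigzag persistence diagram and Alexander duality can be applied one critical slab at a time, and then to pass to the general locally compact polyhedral case by piecewise‑linear approximation of $f$ together with the Stability Theorem~\ref{thm:stability}.
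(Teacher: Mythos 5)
The paper does not actually prove this theorem: it is stated and then the reader is pointed to \cite{kalisnik, sarathesis} for the proof. So there is no ``paper proof'' to compare against, only your blind outline. Your high-level strategy --- reduce to a rectangle-by-rectangle comparison of measures, apply Alexander duality to the compactly supported part of a slab, and then account for the passage between interlevelsets of $\XX$ and of $\YY$ --- is the natural one, and is almost certainly the route taken in the references. However, two of your central assertions are not correct as written.

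First, the reduction you state in step one is too strong. The seven-term zigzag modules $\check{\Hgr}^{j}\XX_{\{a,b,c,d\}}$ and $\widetilde{\Hgr}_{n-j-1}\YY_{\{a,b,c,d\}}$ (reversed) are \emph{not} isomorphic in general, even after the index reversal. Take $n=2$ and $X = \{(0,t_0)\}$, a single point at level $t_0$. Then $\check{\Hgr}^{0}(\XX_p^q)=\kk$ for every $[p,q]\ni t_0$, including when $t_0$ is an endpoint, whereas $\widetilde{\Hgr}_{1}(\YY_p^q)=0$ whenever $p<q$ (the complement of a point in a slab retracts onto $S^2$, or onto a disk when $t_0$ is an endpoint), and $\widetilde{\Hgr}_1(\YY_{t_0}^{t_0})=\kk$. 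The two zigzags have different ranks at several terms; only their \emph{measures} agree (both are identically zero here, as the supporting intervals are degenerate). So the argument must run at the level of the four multiplicities, not at the level of a module isomorphism, and your third step --- converting the relative/coned picture of $S\setminus\XX_p^q$ back into the \emph{absolute} zigzag of $\YY$ --- needs a genuine Diamond/Restriction computation that the sketch omits. Second, in step two the space $S$ (one-point compactification of $\RR^n\times[a,d]$) is indeed a $\kk$-homology $n$-sphere, but it is an $(n{+}1)$-dimensional space that is not a manifold at $\infty$, and classical Alexander duality does not apply to it as stated; one must instead use Lefschetz duality for the manifold-with-boundary $S^n\times[a,d]$, or Alexander duality in $S^{n+1}$ followed by an excision/Mayer--Vietoris argument, and the degree bookkeeping has to be redone accordingly. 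You are candid that these are exactly the points worked out in detail in \cite{kalisnik,sarathesis}, and your alternative route (prove the Morse-type case via levelset zigzag persistence, then approximate and invoke stability) is a plausible and cleaner strategy, but as written the proposal contains an assertion (module isomorphism) that is falsifiable and a duality step that is not justified.
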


For the proof, we refer to~\cite{kalisnik, sarathesis}. Using this version of Alexander duality theorem, Henry Adams and Gunnar Carlsson~\cite{Adams01012015} provide a criterion for the existence of an evasion path in a sensor network.

\section*{Acknowledgements}

We thank Gregory Brumfiel and Matthew Wright for helpful discussions.

\bibliographystyle{plain}
\bibliography{bib}

\bigskip
\bigskip

\noindent
\footnotesize {\bf Authors' addresses:}

\smallskip

\noindent Gunnar Carlsson,  \ Stanford University
\hfill {\tt gunnar@ayasdi.com}

\noindent Vin de Silva,
 \  Pomona College \hfill  {\tt vin.desilva@pomona.edu}

\noindent
Sara Kali\v snik,  \ Wesleyan University
\hfill {\tt skalisnikver@wesleyan.edu}

\noindent Dmitriy Morozov,
Lawrence Berkeley National Lab \hfill {\tt dmitriy@mrzv.org}

 \end{document}